\documentclass[11pt, reqno, english]{amsart}  
\usepackage[utf8]{inputenc}
\usepackage[T1]{fontenc}
\usepackage{amsmath,amsthm}
\usepackage{amsfonts,amssymb}
\usepackage{url}
\usepackage{mathtools}  
\usepackage[colorlinks=true,urlcolor=blue,linkcolor=red,citecolor=magenta]{hyperref}
\usepackage{enumerate, paralist}
\usepackage{tikz-cd}
\usepackage[left=1in,right=1in,top=1in,bottom=1in]{geometry}
\numberwithin{equation}{section}

\theoremstyle{plain}
\newtheorem{theorem}{Theorem}[section]
\newtheorem{lemma}[theorem]{Lemma}

\newtheorem{proposition}[theorem]{Proposition}

\theoremstyle{definition}

\newtheorem{remark}[theorem]{Remark}

\newcommand{\R}{\mathbb{R}}

\newcommand{\Z}{\mathbb{Z}}
\newcommand{\F}{\mathcal{F}}
\newcommand{\G}{\mathcal{G}}
\newcommand{\KG}{\mathrm{KG}}
\DeclareMathOperator{\conv}{\mathrm{conv}}

\DeclareMathOperator{\Null}{\mathrm{Null}}
\DeclareMathOperator{\rank}{\mathrm{rank}}

\linespread{1.185}

\begin{document}

\title[Transversal generalizations of hyperplane equipartitions]{Transversal generalizations of hyperplane equipartitions}



\author{Florian Frick}
\address[FF]{Dept.\ Math.\ Sciences, Carnegie Mellon University, Pittsburgh, PA 15213, USA}
\email{frick@cmu.edu} 

\author{Samuel Murray}
\address[SM]{Dept.\ Math.\ Sciences, Carnegie Mellon University, Pittsburgh, PA 15213, USA}
\email{slmurray@andrew.cmu.edu} 

\author{Steven Simon}
\address[SS]{Dept.\ Math.\, Bard College, Annandale-on-Hudson, NY 12504}
\email{ssimon@bard.edu} 

\author{Laura Stemmler}
\address[LS]{Dept.\ Math.\ Sciences, Carnegie Mellon University, Pittsburgh, PA 15213, USA}
\email{lstemmle@andrew.cmu.edu}

\thanks{FF was supported by NSF grant DMS 1855591, NSF CAREER Grant DMS 2042428, and a Sloan Research Fellowship.}


\begin{abstract} 
\small

The classical Ham Sandwich theorem states that any $d$ point sets in $\R^d$ can be simultaneously bisected by a single affine hyperplane. A generalization of Dolnikov asserts that any $d$ families of pairwise intersecting compact, convex sets in~$\R^d$ admit a common hyperplane transversal. We extend Dolnikov's theorem by showing that families of compact convex sets satisfying more general non-disjointness conditions admit common transversals by multiple hyperplanes. In particular, these generalize all known optimal results to the long-standing Gr\"unbaum--Hadwiger--Ramos measure equipartition problem in the case of two hyperplanes. Our proof proceeds by establishing topological Radon-type intersection theorems and then applying Gale duality in the linear setting. For a single hyperplane, this gives a new proof of Dolnikov's original result via Sarkaria's non-embedding criterion for simplicial complexes.

\end{abstract}

\date{\today}
\maketitle

\section{Introduction}

The starting point for our investigation is the classical Ham Sandwich theorem conjectured by Steinhaus and proved by Banach (see~\cite{BZ04}), a result which is at the origin of topological methods in discrete geometry:

\begin{theorem}
\label{thm:hamsandwich}
	Let $X_1, \dots, X_d$ be finite subsets of $\R^d$. Then there exists an affine hyperplane~$H$ such that both of the corresponding open halfspaces $H^+$ and $H^-$ contain no more than half the points of each~$X_i$.
\end{theorem}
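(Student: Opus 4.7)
The plan is to deduce Theorem~\ref{thm:hamsandwich} from the Borsuk--Ulam theorem via the standard configuration space / test map paradigm. First, I would parametrize oriented affine hyperplanes in $\R^d$ by the sphere $S^d$: a unit vector $v=(a_0,a_1,\ldots,a_d) \in S^d \subset \R^{d+1}$ with $(a_1,\ldots,a_d) \neq 0$ corresponds to the oriented hyperplane $H(v)=\{x \in \R^d : a_1x_1+\cdots+a_dx_d = a_0\}$ together with the open positive halfspace $H^+(v)=\{x : \sum_{j=1}^d a_jx_j > a_0\}$. Antipodal points $v$ and $-v$ describe the same hyperplane with opposite orientations, while the two poles $(\pm 1,0,\ldots,0)$ represent degenerate ``hyperplanes at infinity.''

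For each $i \in \{1,\ldots,d\}$ one would like to set $f_i(v) = |X_i \cap H^+(v)| - |X_i \cap H^-(v)|$, which is antipodally odd, but this count is discontinuous whenever a point of $X_i$ lies on $H(v)$. I would remedy this by replacing each $x \in X_i$ with a small uniform ball $B_\epsilon(x)$ of mass $1/|X_i|$, producing an absolutely continuous probability measure $\mu_i^\epsilon$ on $\R^d$. The signed-mass functions $f_i^\epsilon(v) = \mu_i^\epsilon(H^+(v)) - \mu_i^\epsilon(H^-(v))$ are then continuous and odd, and together assemble into a continuous antipodal map $f^\epsilon \colon S^d \to \R^d$. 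The Borsuk--Ulam theorem yields some $v^\epsilon \in S^d$ with $f^\epsilon(v^\epsilon)=0$, so that $\mu_i^\epsilon(H^+(v^\epsilon)) = \mu_i^\epsilon(H^-(v^\epsilon)) = \tfrac12$ for every $i$.

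To finish, I would extract a convergent subsequence $v^{\epsilon_k} \to v^\ast \in S^d$ as $\epsilon_k \to 0$ and show that $H(v^\ast)$ satisfies the conclusion. The step I expect to be the most delicate is this limiting argument. First one rules out the degenerate poles: in any sufficiently small neighborhood of $(\pm 1,0,\ldots,0)$, the halfspace $H^+(v)$ (respectively $H^-(v)$) fails to meet a fixed bounded set containing all the $\epsilon$-neighborhoods of the $X_i$, which would contradict the bisection $\mu_i^{\epsilon_k}(H^\pm(v^{\epsilon_k})) = \tfrac12$. Second, for any $x \in X_i \cap H^+(v^\ast)$ the open condition defining $H^+$ guarantees that $B_{\epsilon_k}(x) \subset H^+(v^{\epsilon_k})$ for all sufficiently large $k$, hence $|X_i \cap H^+(v^\ast)| \cdot \tfrac{1}{|X_i|} \leq \mu_i^{\epsilon_k}(H^+(v^{\epsilon_k})) = \tfrac12$, yielding $|X_i \cap H^+(v^\ast)| \leq |X_i|/2$ and symmetrically for $H^-(v^\ast)$. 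This is exactly the desired conclusion, with the slight subtlety that points of $X_i$ are permitted to lie on the limiting hyperplane $H(v^\ast)$ itself.
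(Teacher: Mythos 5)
Your argument is correct, but it follows a genuinely different route from the paper. You give the classical direct proof: parametrize oriented affine hyperplanes (plus the two degenerate ``hyperplanes at infinity'') by $S^d$, smooth each point set into an absolutely continuous measure $\mu_i^\epsilon$, apply the Borsuk--Ulam theorem to the odd test map $f^\epsilon\colon S^d\to\R^d$, and pass to a limit as $\epsilon\to 0$; your handling of the two delicate points (ruling out the poles for the limit $v^\ast$, and converting bisection of the smoothed measures into the ``at most half'' conclusion while allowing points of $X_i$ to lie on $H(v^\ast)$) is sound. The only step worth making explicit is that at a zero $v^\epsilon$ the equality $\mu_i^\epsilon(H^+(v^\epsilon))=\mu_i^\epsilon(H^-(v^\epsilon))=\tfrac12$ uses both that $v^\epsilon$ is not a pole (automatic, since $f^\epsilon=\mp(1,\dots,1)\neq 0$ there) and that an honest hyperplane is $\mu_i^\epsilon$-null by absolute continuity. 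The paper, by contrast, never proves Theorem~\ref{thm:hamsandwich} directly: it is cited as classical, and within the paper it is recovered as the balanced special case of Dolnikov's theorem (Theorem~\ref{thm:dolnikov}), applied to the intersecting families $\F_i=\{\conv A : A\subset X_i,\ |A|>\tfrac{|X_i|}{2}\}$, with Dolnikov's theorem itself proved in Section~\ref{sec:dolnikov} via Gale duality from the linear case of Sarkaria's theorem. Your approach buys a short, self-contained proof (only Borsuk--Ulam plus a compactness argument) that also yields the measure version directly; the paper's route is heavier but subsumes the statement in a stronger combinatorial result that allows unbalanced families and feeds into the transversal generalizations that are the subject of the paper.
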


The hyperplane $H$ given by Theorem~\ref{thm:hamsandwich} simultaneously bisects each set~$X_i$. This result is often stated for finite Borel measures instead of point sets, which follows from the formulation above by approximation. Conversely,  Theorem~\ref{thm:hamsandwich} can be derived from the version for Borel measures via a standard compactness argument.

We recall two generalizations of the Ham Sandwich theorem. First, Dolnikov gave a combinatorial criterion for not necessarily balanced hyperplane partitions of point sets. This recovers the Ham Sandwich theorem in the balanced case. Second, generalizations for equipartitions by multiple hyperplanes have been proven. In the same way that Dolnikov's result extends the classical Ham Sandwich theorem, we generalize equipartition results by multiple hyperplanes to the not necessarily balanced setting.

\subsection{Two generalizations of the Ham Sandwich theorem}
Dolnikov~\cite{Do92} generalized Theorem~\ref{thm:hamsandwich} to hyperplane piercings of families of compact convex sets. Recall that a set family $\F$ is said to be \emph{intersecting} if any two members of the family have non-trivial intersection. While we state Dolnikov's theorem below for families of polytopes (i.e., convex hulls of finite point sets in~$\R^d$), a standard approximation argument ensures the result for arbitrary compact convex sets. 

\begin{theorem}
\label{thm:dolnikov}
	Let $\F_1, \dots, \F_d$ be finite families of polytopes in $\R^d$, where each family is intersecting. Then there exists an affine hyperplane~$H$ that intersects each polytope from each~$\F_i$.
\end{theorem}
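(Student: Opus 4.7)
The plan is to parametrize affine hyperplanes of $\R^d$ by the sphere $S^d \subset \R^{d+1}$ and reduce the existence of a common transversal to finding a zero of a continuous odd map $S^d \to \R^d$, supplied by the Borsuk--Ulam theorem. First, I would lift each point $p \in \R^d$ to $\tilde p = (p, 1) \in \R^{d+1}$: a unit vector $v = (u, t) \in S^d$ then encodes the hyperplane $H_v = \{x \in \R^d : \langle u, x\rangle + t = 0\}$, which is a genuine hyperplane exactly when $u \ne 0$, and $H_v$ intersects a polytope $P$ with vertex set $V(P)$ if and only if
\[
	\min_{p \in V(P)} \langle v, \tilde p\rangle \; \le \; 0 \; \le \; \max_{p \in V(P)} \langle v, \tilde p\rangle.
\]

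Next, for each $v \in S^d$ and each $i \in \{1, \dots, d\}$, I would form the continuous functions
\[
	A_i(v) = \min_{P \in \F_i} \max_{p \in V(P)} \langle v, \tilde p\rangle, \qquad B_i(v) = \max_{P \in \F_i} \min_{p \in V(P)} \langle v, \tilde p\rangle,
\]
so that $H_v$ is a common transversal of $\F_i$ precisely when $B_i(v) \le 0 \le A_i(v)$. Two observations do the heavy lifting. First, the identity $\min(-f) = -\max f$ yields $A_i(-v) = -B_i(v)$. Second, the pairwise intersecting hypothesis forces $B_i(v) \le A_i(v)$ for every $v$: if $P^*, Q^* \in \F_i$ realize $B_i(v)$ and $A_i(v)$ respectively and $x \in P^* \cap Q^*$, then $B_i(v) = \min_{p \in V(P^*)} \langle v, \tilde p\rangle \le \langle v, \tilde x\rangle \le \max_{q \in V(Q^*)} \langle v, \tilde q\rangle = A_i(v)$.

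I would then define $h : S^d \to \R^d$ by $h_i(v) = A_i(v) + B_i(v)$. The first observation makes $h$ odd, and continuity is immediate. Borsuk--Ulam produces $v^* \in S^d$ with $h(v^*) = 0$, so $A_i(v^*) = -B_i(v^*)$ for every $i$. Combining with the second observation gives $B_i(v^*) \le -B_i(v^*)$, hence $B_i(v^*) \le 0 \le A_i(v^*)$ simultaneously for all $i$; the degenerate choices $v = (0, \pm 1)$ give $h_i(v) = \pm 2 \ne 0$, so the spatial component of $v^*$ is nonzero and $H_{v^*}$ is a genuine hyperplane meeting every polytope in every family.

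I expect the only delicate point to be isolating the right odd map; once $h = A + B$ is identified, the intersecting hypothesis does exactly the work needed to convert the Borsuk--Ulam zero into the transversal condition. An alternative route, more in line with the Gale-duality philosophy highlighted in the abstract, would be to recast the transversal problem as a realizability question for sign patterns on the lifted configuration and derive it from Sarkaria's non-embedding criterion; that second approach is heavier for a single hyperplane but is what adapts naturally to the multiple-hyperplane generalizations pursued in the remainder of the paper.
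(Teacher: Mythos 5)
Your proof is correct, and it takes a genuinely different route from the paper. You parametrize affine hyperplanes directly by $v=(u,t)\in S^d$ via the lift $p\mapsto (p,1)$, build the componentwise odd map $h_i(v)=A_i(v)+B_i(v)$ from the min--max and max--min functionals, and apply the classical Borsuk--Ulam theorem once; the pairwise-intersecting hypothesis enters only through the inequality $B_i(v)\le A_i(v)$, which together with $A_i(v^*)=-B_i(v^*)$ at the antipodal zero yields $B_i(v^*)\le 0\le A_i(v^*)$ for all $i$ simultaneously, and your check that $h_i(0,\pm1)=\pm2$ correctly rules out the degenerate parameters. All steps (continuity, the equivalence between piercing and $B_i\le 0\le A_i$, and the oddness identity $A_i(-v)=-B_i(v)$) are sound. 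The paper instead proves the theorem by passing to a Gale-dual point configuration and invoking the linear case of Sarkaria's non-embedding theorem (Theorem~\ref{thm:sarkaria} together with Proposition~\ref{prop:Gale} and Proposition~\ref{prop:dolnikov-finite}); that detour is deliberately heavier for a single hyperplane, but it is the mechanism --- Radon pairs plus Gale duality --- that the rest of the paper extends to transversals by several hyperplanes, as you anticipate in your closing remark. Your argument is more elementary and self-contained, at the cost of not suggesting the multi-hyperplane generalization; the only cosmetic point is that you should either discard empty families $\F_i$ or note that the functions $A_i,B_i$ require $\F_i\neq\emptyset$, a triviality that does not affect correctness.
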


This generalizes Theorem~\ref{thm:hamsandwich}: Given finite sets ${X_1, \dots, X_d \subset \R^d}$, for each $X_i$ consider the corresponding intersecting family $\F_i = \left\{\conv A \ : \ A \subset X_i, \ |A| > \frac{|X_i|}{2}\right\}$. By Theorem~\ref{thm:dolnikov} there is an affine hyperplane~$H$ that intersects all sets from each~$\F_i$. This hyperplane must bisect each~$X_i$; otherwise, there is some~$i$ such that one of the open halfspaces $H^\pm$ contains more than half the elements of~$X_i$, say $|H^+ \cap X_i| > \frac12|X_i|$. Then $\conv (H^+\cap X_i) \in \F_i$ is disjoint from~$H$, contradicting that $H$ intersects all polytopes in every~$\F_i$. Theorem~\ref{thm:dolnikov} allows for the (unbalanced) case that $\F_i$ consists of polytopes, where the inclusion-minimal polytopes do not all have the same number of vertices.

Another generalization of the Ham Sandwich theorem concerns equipartitions by more than one hyperplane. Given finite sets $X_1, \dots, X_m \subset \R^d$, we say that $k$ (not necessarily distinct) affine hyperplanes $H_1, \dots, H_k \subset \R^d$ form an \emph{equipartition} if each of the (not necessarily distinct) $2^k$ open orthants $H_1^\pm \cap \dots \cap H_k^\pm$ contains at most $\frac{|X_i|}{2^k}$ of the points of each~$X_i$. The smallest dimension~$d$ such that any $m$ finite sets $X_1, \dots, X_m \subset \R^d$ admit an equipartition by $k$ affine hyperplanes is denoted by~$\Delta(m,k)$. Thus Theorem~\ref{thm:hamsandwich} states that $\Delta(m,1) \le m$, and this upper bound is easily seen to be tight. As with Theorem~\ref{thm:hamsandwich}, the problem of determining $\Delta(m,k)$ can be stated for more finite Borel measures more generally, and again approximation arguments show that the two problems are equivalent.  

For multiple hyperplanes, Avis~\cite{Av84} and Ramos~\cite{Ra96} observed the general lower bound $\Delta(m,k) \ge \lceil\frac{2^k-1}{k}m \rceil$ by placing the sets $X_1, \dots, X_m$ along the moment curve. This lower bound is conjectured to be optimal for all $m$ and~$k$.  The current best upper bound for general~$k$ is due to Mani-Levitska, Vre\'cica, and \v Zivaljevi\'c~\cite{ MLVZ06}: 

\begin{theorem}
\label{thm:hyperplane-bound}
    Let $m=2^p+q$, where $p\ge 0$ and $0\le q<2^p$. 
    Then $\Delta(m,k) \le 2^{k+p-1}+q$.
\end{theorem}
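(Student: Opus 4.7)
The plan is to set up a configuration-space/test-map scheme and then rule out nonexistence of an equipartition via an equivariant cohomology computation. Write $d = 2^{k+p-1}+q$ and let $W = (\Z/2)^k$. Parametrize oriented affine hyperplanes in $\R^d$ (including ``hyperplanes at infinity'') by the sphere $S^d$, so that $k$-tuples of such hyperplanes correspond to points of $(S^d)^k$, on which $W$ acts by reversing orientations factorwise (antipodally on each sphere). By the standard approximation argument, it suffices to equipartition any $m$ absolutely continuous probability measures $\mu_1,\dots,\mu_m$. For each $\mu_i$, the vector of $\mu_i$-measures of the $2^k$ open orthants cut out by a $k$-tuple of hyperplanes, after subtracting its mean, defines a $W$-equivariant map $\phi_i\colon (S^d)^k \to U_k$ into the $(2^k-1)$-dimensional real representation $U_k = \{v \in \R^{2^k} : \sum v_j = 0\}$, where $W$ acts on $U_k$ by permuting orthants via its regular representation. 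An equipartition is precisely a common zero of the aggregate map $\Phi = (\phi_1,\dots,\phi_m)\colon (S^d)^k \to U_k^{\oplus m}$.

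If no equipartition existed, $\Phi$ would avoid the origin, yielding a $W$-equivariant map $(S^d)^k \to S(U_k^{\oplus m})$. I will derive a contradiction by showing that the mod-$2$ Euler class $e(U_k^{\oplus m})$ is nonzero in $H^{m(2^k-1)}_W((S^d)^k;\mathbb{F}_2)$. The restriction map from the group cohomology of $W$, namely $\mathbb{F}_2[u_1,\dots,u_k] \to \mathbb{F}_2[t_1,\dots,t_k]/(t_1^{d+1},\dots,t_k^{d+1})$, sends $u_j \mapsto t_j$, and the decomposition of $U_k$ into all $2^k-1$ nontrivial characters of $W$ yields
\[
e(U_k^{\oplus m}) \;=\; \Biggl(\prod_{0\neq\epsilon\in\mathbb{F}_2^k}\bigl(\epsilon_1 t_1 + \cdots + \epsilon_k t_k\bigr)\Biggr)^{m}
\]
in this quotient ring.

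The main obstacle is thus a purely combinatorial nonvanishing statement: to exhibit a monomial $t_1^{a_1}\cdots t_k^{a_k}$ of this $m$-th power with every $a_j \leq d$ and odd coefficient. Writing $m = 2^p + q$, I apply the Frobenius identity in characteristic $2$ to factor the polynomial as
\[
\Biggl(\prod_{\epsilon\neq 0}\bigl(\epsilon_1 t_1^{2^p}+\cdots+\epsilon_k t_k^{2^p}\bigr)\Biggr)\cdot\Biggl(\prod_{\epsilon\neq 0}\bigl(\epsilon_1 t_1+\cdots+\epsilon_k t_k\bigr)\Biggr)^{q}.
\]
The first factor is a polynomial in the $t_j^{2^p}$ whose per-variable degree is bounded by $2^p\cdot 2^{k-1} = 2^{k+p-1}$. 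The delicate part is then to choose compatible monomials in the two factors whose product has odd coefficient (verified by Lucas's theorem) and whose total per-variable degree stays within $d = 2^{k+p-1}+q$; this is where the specific value of $d$ is engineered to make the argument tight. Carrying out this monomial tracking against the explicit structure of the Dickson-type polynomial $\prod_{\epsilon\neq 0}(\epsilon_1 t_1+\cdots+\epsilon_k t_k)$ (which has per-variable degree exactly $2^{k-1}$) is the principal technical hurdle. Once established, the Euler class is nonzero, no such $W$-equivariant map to $S(U_k^{\oplus m})$ can exist, and so the desired equipartition must be realizable, giving $\Delta(m,k)\le 2^{k+p-1}+q$.
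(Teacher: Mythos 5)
Your configuration-space/test-map setup is sound, and it is essentially the same cohomological engine the paper runs: the proof of Theorem~\ref{thm:upper} also uses only the $\mathbb{Z}_2^k$-action, the reduced regular representation $U_k^{\oplus m}$, the mod-$2$ Dickson identity $\prod_{\epsilon\neq 0}(\epsilon_1u_1+\cdots+\epsilon_ku_k)=\sum_{\sigma\in\mathfrak{S}_k}u_{\sigma(1)}^{2^{k-1}}u_{\sigma(2)}^{2^{k-2}}\cdots u_{\sigma(k)}^{1}$, and the Frobenius splitting of the $m$-th power according to $m=2^p+q$; Proposition~\ref{prop:cohomology} is the paper's version of your Euler-class criterion. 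The only structural difference is that the paper proves the stronger Radon-type statement and then deduces the equipartition bound via Theorem~\ref{thm:implications} and Gale duality, whereas you argue directly on $(S^d)^k$ over $(\R P^d)^k$; that shortcut is legitimate for Theorem~\ref{thm:hyperplane-bound} itself.

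The genuine gap is that you never prove the one statement on which the bound $d=2^{k+p-1}+q$ actually rests: exhibiting a monomial of $\bigl(\prod_{\epsilon\neq0}(\epsilon_1t_1+\cdots+\epsilon_kt_k)\bigr)^{m}$ with odd coefficient and every exponent at most $d$. You explicitly defer this as ``the principal technical hurdle'' and state the conclusion ``once established''; but everything before that point is standard setup, and this monomial analysis is precisely where the specific value $2^{k+p-1}+q$ enters, so without it nothing is proved. For comparison, the paper closes exactly this step in the proof of Theorem~\ref{thm:upper}: after the Frobenius splitting, the monomial $\prod_{i=1}^{k}t_i^{\,2^{k+p-i}+2^{i-1}q}$ occurs with coefficient $1$ (the uniqueness of the contributing permutations is what the padding module $V'$ with $\alpha_i=U(m,k)-2^{p+k-i}-q2^{i-1}$ enforces there), and its exponents are at most $d$ because $q\le 2^p-1$ and $2^{k+p-i}+2^{p+i-1}\le 2^{k+p-1}+2^p$ for all $1\le i\le k$. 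Either carry out this bookkeeping (or an equivalent selection argument) in full, or your proposal remains an outline of the known strategy rather than a proof.
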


While recovering Theorem~\ref{thm:hamsandwich} for arbitrary~$m$, the upper bound of Theorem~\ref{thm:hyperplane-bound} coincides with the lower bound above only when $k=2$ and $m+1$ is a power of two. Additional matching upper bounds have been established for $k=2$ when $m$ or $m-1$ is a power of two, for $k=3$ when $m \in \{1,2,4\}$~\cite{BFHZ16}, and for no values of $m$ when $k\geq 4$; see~\cite{BFHZ18} for a survey of known results. We record the current situation as follows:

\begin{theorem}
\label{thm:hyperplane-exact}~ 
\begin{compactenum}[(i)]
        \item $\Delta(2^s-1,2) = 3\cdot 2^{s-1} -1$ for all integers $s\geq 1$
        \item $\Delta(2^s+1,2) = 3\cdot 2^{s-1} +2$ for all integers $s\geq 2$ 
        \item $\Delta(2^s,2) = 3\cdot 2^{s-1}$ for all integers $s\geq 1$
        \item $\Delta(1,3)=3$
        \item $\Delta(2,3)=5$
        \item $\Delta(4,3)=10$
    \end{compactenum}
\end{theorem}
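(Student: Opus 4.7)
My plan is to match the Avis--Ramos lower bound $\Delta(m,k) \ge \lceil (2^k-1)m/k \rceil$ (obtained by placing the sets along the moment curve) with appropriate upper bounds so that the two coincide in each of the six cases. First I would perform the arithmetic verifications for the lower bounds: $\lceil 3(2^s-1)/2\rceil = 3\cdot 2^{s-1}-1$, $\lceil 3(2^s+1)/2\rceil = 3\cdot 2^{s-1}+2$, and $3\cdot 2^s/2 = 3\cdot 2^{s-1}$ for the $k=2$ cases (i)--(iii), and $\lceil 7/3\rceil = 3$, $\lceil 14/3\rceil = 5$, $\lceil 28/3\rceil = 10$ for the $k=3$ cases (iv)--(vi). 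Each matches the exact value claimed in the statement, so the lower bounds are settled uniformly.

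For the upper bounds, case (i) is immediate from Theorem~\ref{thm:hyperplane-bound}: writing $2^s-1 = 2^{s-1}+(2^{s-1}-1)$ gives $p=s-1$ and $q=2^{s-1}-1$, so $\Delta(2^s-1,2) \le 2^s + 2^{s-1} - 1 = 3\cdot 2^{s-1}-1$. Case (iv) is Hadwiger's classical equipartition theorem for a single measure in $\R^3$ by three hyperplanes. The remaining four cases (ii), (iii), (v), and (vi) lie outside the reach of Theorem~\ref{thm:hyperplane-bound}; for these I would invoke the sharpened upper bounds of Blagojevi\'c--Frick--Haase--Ziegler~\cite{BFHZ16}, which are obtained by an equivariant obstruction-theoretic refinement of the test-map framework of Mani-Levitska--Vre\'cica--\v Zivaljevi\'c~\cite{MLVZ06}.

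The hard part is supplying the sharpened upper bounds in (ii), (iii), (v), and (vi). Each of these requires a separate equivariant computation on the configuration space of $k$-tuples of hyperplanes, carrying the action of the hyperoctahedral group $W_k = (\Z/2)^k \rtimes S_k$, and showing that a specific test map into a representation sphere cannot exist; no uniform argument is known to handle all four cases simultaneously. Once the case-by-case computations of \cite{BFHZ16} are invoked, the theorem reduces to the numerical verifications of the moment-curve lower bounds outlined above.
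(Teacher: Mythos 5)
Your proposal is correct and matches how the paper itself handles this statement: Theorem~\ref{thm:hyperplane-exact} is recorded there as a compilation of known results, with the Avis--Ramos moment-curve construction giving the lower bounds, Theorem~\ref{thm:hyperplane-bound} covering case (i), Hadwiger's result underlying case (iv), and the equivariant obstruction-theoretic computations of~\cite{BFHZ16} supplying the remaining matching upper bounds (ii), (iii), (v), (vi). The arithmetic verifications you give for the ceilings and for the $p,q$ decomposition in case (i) are all accurate, so nothing further is needed beyond the cited sources, exactly as in the paper.
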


\subsection{Statement of main results}
For a finite family $\F$ of non-empty sets and an integer $r\ge 2$, denote by $\KG^r(\F)$ the $r$-uniform \emph{Kneser hypergraph} of~$\F$, that is, the hypergraph with vertex set~$\F$ and a hyperedge $\{A_1, \dots, A_r\}$ for every collection of pairwise disjoint $A_1, \dots, A_r \in \F$. In particular, $\KG^2(\F)$  is the Kneser graph $\KG(\F)$ of ~$\F$. The \emph{chromatic number} ~$\chi(\KG^r(\F))$ of this hypergraph is the smallest number of colors required to color the vertices so that no hyperedge is monochromatic.  As usual, we denote this by $\chi(\KG(\F))$ when $r=2$. Note that $\chi(\KG^r(\F)) \leq m$ means precisely that there is a partition $\F = \F_1 \cup \dots \cup \F_m$ such that no $r$ sets from any $\F_i$ are pairwise disjoint. We say that a collection of $k$ hyperplanes \emph{pierces} a given family $\F$ of non-empty sets of $\R^d$ if the union of these hyperplanes intersects each set from~$\F$. We may now state our main results. 

\begin{theorem}
\label{thm:upper-transversal}
    Let $m=2^p+q$, where $p\ge 0$ and $0\le q<2^p$, and let $d=2^{k+p-1}+q$. If $\F$ is a finite family of polytopes in $\R^d$ such that $\chi(\KG^{2^k}(\F)) \le m$, then there exist $k$ affine hyperplanes in~$\R^d$ which pierce~$\F$. 
\end{theorem}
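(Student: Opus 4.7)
The plan is to reduce the piercing question to a topological Radon-type intersection theorem and then transfer back to the linear setting via Gale duality. Parametrize configurations of $k$ oriented affine hyperplanes in $\R^d$ by $(S^d)^k$, with each hyperplane $\{x : \langle a, x\rangle = b\}$ identified with $(a,b) \in S^d$ and with $G = (\Z/2)^k$ acting by reversing orientations; each configuration partitions $\R^d$ into at most $2^k$ open orthants, and a polytope $P \in \F$ is pierced precisely when $P$ is not contained in a single open orthant. Argue by contradiction: if no piercing exists, every configuration assigns each $P$ to some open orthant. Use the chromatic hypothesis $\chi(\KG^{2^k}(\F)) \le m$ to fix a partition $\F = \F_1 \sqcup \cdots \sqcup \F_m$ in which no $\F_i$ contains $2^k$ pairwise disjoint polytopes, so that in any hypothetical no-piercing configuration the polytopes of each $\F_i$ sitting in a common open orthant form a pairwise intersecting subfamily.

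The contradiction then comes from constructing a $G$-equivariant map from $(S^d)^k$ into a target space whose equivariant topology forbids it. On the topological side, establish a Radon-type intersection theorem in the spirit of B\'ar\'any--Shlosman--Sz\H ucs, using Sarkaria's non-embedding criterion: the chromatic bound on $\KG^{2^k}(\F)$ forces that any continuous map of a certain simplicial complex --- built from the vertex sets of $\F$ and the partition $\F = \F_1 \sqcup \cdots \sqcup \F_m$ --- into a space of the right dimension must have faces from distinct colour classes whose images meet. Gale duality then transports this non-embedding statement back to the linear side: the Gale transform of the union of vertex sets of $\F$ converts the statement ``all vertices of some $P$ share an open orthant in $\R^d$'' into a linear algebraic statement about a rainbow intersection in the complementary dimension, so that the Radon-type theorem produces precisely the intersection pattern that contradicts the no-piercing assumption.

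The case $k = 1$ should reproduce Sarkaria's proof of Dolnikov's Theorem~\ref{thm:dolnikov}, with $\Z/2$ acting antipodally on $S^d$, the chromatic bound $\chi(\KG(\F)) \le d$ giving a partition into intersecting families, and the single-hyperplane piercing following from the Borsuk--Ulam theorem through Sarkaria's criterion. I expect the main obstacle to lie in the case $k \ge 2$: carrying out the equivariant cohomology computation for the $(\Z/2)^k$-action on $(S^d)^k$ with the precise numerology $m = 2^p + q$ and $d = 2^{k+p-1} + q$ --- the sharpness threshold underlying the Mani-Levitska--Vre\'cica--\v Zivaljevi\'c argument for Theorem~\ref{thm:hyperplane-bound} --- and in verifying that Gale duality equivariantly carries the Sarkaria-type non-embedding obstruction into a piercing of the original family in $\R^d$ rather than some weaker rainbow-intersection statement. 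Getting the dimension count to match exactly $d = 2^{k+p-1} + q$ is what pins down the theorem's sharpness and is where the delicate part of the equivariant obstruction theory will be needed.
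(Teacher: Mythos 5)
Your plan has the right outer shell---a Radon-type statement about $k$ interacting Radon partitions, transferred to hyperplane piercings by Gale duality---but the core of the argument is missing, and the tool you propose to fill it with would not suffice. Sarkaria's criterion (Theorem~\ref{thm:sarkaria}) produces a \emph{single} Radon pair; in the paper it is used only to reprove Dolnikov's theorem ($k=1$) and in the reduction of Section~\ref{sec:reduction}. For $k\ge 2$ one needs the genuinely new statement that there are $k$ Radon pairs $(\sigma_1^+,\sigma_1^-),\dots,(\sigma_k^+,\sigma_k^-)$ such that none of the $2^k$ intersections $\sigma_1^\pm\cap\dots\cap\sigma_k^\pm$ contains a member of~$\F$ (Theorem~\ref{thm:upper}); no iteration of Sarkaria's theorem yields this, since the constraint couples the $k$ pairs through their mutual intersections. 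The paper proves it by the configuration space/test map scheme on $(S^n)^k$ with $n\ge d+2^{k+p-1}+q+1$ (the $k$-fold product of deleted joins $\Sigma_n^{\times k}$), \emph{not} on the hyperplane configuration space $(S^d)^k$ you propose: the test map $F=(D\oplus R)\circ\iota$ combines the Radon maps $R$ with distance functions to the subcomplexes $K^j_g$ encoding the colouring $\F=\F_1\cup\dots\cup\F_m$, and the required non-existence of a nonvanishing $\Z_2^k$-equivariant map is Proposition~\ref{prop:cohomology}, verified by the Dickson-polynomial identity with the auxiliary summand $V'$ padded so that $P_U=u_1^n\cdots u_k^n$. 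That computation, with exactly the numerology $d=2^{k+p-1}+q$, is the part you explicitly defer, so the proof's essential content is absent.

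Two further concrete problems with your setup. In the direct parametrization by $(S^d)^k$ you never construct the equivariant test map: for measures one integrates over orthants, but for a family of polytopes the assignment of each unpierced $P$ to an open orthant is a discrete datum, and making it into a continuous equivariant map is precisely the difficulty the paper avoids by working in the Gale-dual Radon picture, where the distances $d(\cdot,K^j_g)$ do this job. Moreover, your claim that in a no-piercing configuration the polytopes of $\F_i$ lying in a common open orthant form a pairwise intersecting family is false: the hypothesis $\chi(\KG^{2^k}(\F))\le m$ only forbids $2^k$ pairwise disjoint members of $\F_i$, and the correct consequence (used by the map $D$) is that members of one colour class cannot occupy all $2^k$ open orthants, since polytopes in distinct open orthants are automatically disjoint. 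Finally, the passage from the Radon-type statement to piercing must be carried out for linear maps and \emph{minimal} Radon pairs via Proposition~\ref{prop:Gale}, Lemma~\ref{lem:equivalence}, and the lift $\R^c\hookrightarrow\R^{c+1}$ in Theorem~\ref{thm:implications}; this step is routine but requires the minimality bookkeeping your sketch omits.
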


In the same way that Dolnikov's theorem implies the Ham Sandwich theorem, Theorem~\ref{thm:upper-transversal} generalizes Theorem~\ref{thm:hyperplane-bound}; see Theorem~\ref{thm:implications} for the proof of this implication. Note that the $k=1$ case of Theorem~\ref{thm:upper-transversal} is Dolnikov's theorem.

\begin{theorem}
\label{thm:two-transversal}
	Let $m=2^s+t$, where $s\geq 1$ is an integer and $t\in \{-1,0, 1\}$. Let $d = \lceil\frac32m\rceil$. If $\F$ is a finite family of polytopes in~$\R^d$ such that $\chi(\KG^4(\F))\le m$, then there exist two affine hyperplanes in~$\R^d$ which pierce $\F$.
\end{theorem}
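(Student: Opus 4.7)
The plan is to mirror the approach of Theorem~\ref{thm:upper-transversal} (whose $k=1$ case is Dolnikov's theorem), combining a linear/Gale-dual reformulation with a $G$-equivariant topological obstruction for $G=(\Z\!/2\times\Z\!/2)\rtimes S_2$.

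First I would lift each polytope $P\in\F$ to $\hat P=P\times\{1\}\subset\R^{d+1}$; piercing $\F$ by two affine hyperplanes in $\R^d$ is equivalent to piercing the lifted family by two linear hyperplanes with unit normals $(u_1,u_2)\in(S^d)^2$, and such a pair fails to pierce $\hat P$ exactly when every vertex of $\hat P$ lies in a single open quadrant $Q_\epsilon(u_1,u_2)=\{x\in\R^{d+1}:\epsilon_1\langle u_1,x\rangle>0,\ \epsilon_2\langle u_2,x\rangle>0\}$ for some $\epsilon\in\{\pm 1\}^2$. Fix a coloring $c:\F\to[m]$ realizing $\chi(\KG^4(\F))\le m$. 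Assuming for contradiction that no pair $(u_1,u_2)$ pierces $\F$, the open sets
\[
A_{i,\epsilon}=\bigl\{(u_1,u_2)\in(S^d)^2:\hat P\subset Q_\epsilon(u_1,u_2)\ \text{for some } P\in c^{-1}(i)\bigr\},\qquad (i,\epsilon)\in[m]\times\{\pm1\}^2,
\]
cover $(S^d)^2$ and are permuted by $G$, which acts antipodally on each $S^d$-factor and swaps the factors (inducing the natural action on sign patterns). Crucially, no $(u_1,u_2)$ can lie in all four sets $A_{i,\epsilon}$ for a single color class $i$: such a point would produce four polytopes in $c^{-1}(i)$ occupying the four pairwise-disjoint open quadrants of $(u_1,u_2)$, hence pairwise disjoint in $\R^d$, contradicting $\chi(\KG^4(c^{-1}(i)))=1$.

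A partition of unity subordinate to this cover then yields a $G$-equivariant map
\[
\Phi:(S^d)^2\longrightarrow(\partial\Delta^3)^{*m}\cong(S^2)^{*m},
\]
where the four vertices of each $\partial\Delta^3$ are labelled by $\{\pm1\}^2$ with the natural $G$-action. The substance of the proof is to show that no such $G$-map exists when $d=\lceil 3m/2\rceil$ and $m=2^s+t$ with $t\in\{-1,0,1\}$. Here Gale duality enters: applying a Sarkaria-style tensor to the chromatic data recasts the non-existence of $\Phi$ as a colored topological Radon intersection statement for point configurations in $\R^{d+1}$ and their Gale transforms, controlled by the $G$-equivariant cohomological index of $(S^d)^2$. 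For $m=2^s+t$ with $t\in\{-1,0,1\}$ this index is exactly what jumps across $d=\lceil 3m/2\rceil$ in the calculations powering the tight cases (i)--(iii) of Theorem~\ref{thm:hyperplane-exact}.

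The main obstacle is transferring those known index bounds from the measure (equipartition) setting to the transversal (colored) setting. The sharp equipartition calculations rely on real-valued test maps built from signed volumes, whereas the present problem provides only the combinatorial four-quadrant sign data carried by the coloring $c$. Ensuring that the Gale-dual Sarkaria construction turns the piercing condition into a Radon-type configuration whose intersection is detected by the \emph{same} $G$-index — rather than by a coarser invariant that would demand a larger dimension — is the technical heart of the argument, and the step I expect to require the most care.
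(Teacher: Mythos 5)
Your first step is correct and is a genuinely different reduction from the paper's: lifting to $\R^{d+1}$, covering $(S^d)^2$ by the sets $A_{i,\epsilon}$ under the assumption that no pair of hyperplanes pierces, and passing through a $G$-invariant partition of unity does yield a $D_4$-equivariant map $\Phi\colon (S^d)^2\to(\partial\Delta^3)^{*m}\cong S(U_2^{\oplus m})$, and your four-disjoint-quadrants observation uses $\chi(\KG^4(\F))\le m$ correctly. The paper instead goes through Gale duality (Lemma~\ref{lem:equivalence}, Theorem~\ref{thm:implications}) to the Radon-pair statement of Theorem~\ref{thm:two-radon} and works on the larger space $(S^n)^2$, $n=d+c+1$, with target $U_2^{\oplus m}\oplus V_2$; your route would bypass all of that.

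The genuine gap is that everything difficult in the theorem is concentrated in the statement you then invoke: that no $D_4$-equivariant map $(S^d)^2\to S(U_2^{\oplus m})$ exists for $d=\lceil\frac32 m\rceil$, $m=2^s+t$, $t\in\{-1,0,1\}$. For $t=-1$ this is a true index computation (Stiefel--Whitney/Fadell--Husseini, cf.\ Proposition~\ref{prop:cohomology}) valid for \emph{every} equivariant map, so your argument closes there. But for $t\in\{0,1\}$ no equivariant cohomological index argument can give $\lceil\frac32 m\rceil$: the index only yields the bound $U(m,2)$ of Theorem~\ref{thm:hyperplane-bound}, which is strictly larger for every $m=2^s$ and every $m=2^s+1$ with $s\ge 2$. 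The computations behind Theorem~\ref{thm:hyperplane-exact}(ii),(iii) in \cite{BFHZ16,BFHZ18} are not quotable as absolute Borsuk--Ulam statements either: they are relative obstruction-theory and degree calculations pinned to the specific test maps coming from measures, i.e.\ to a prescribed map on the non-free part of the configuration space, whereas your $\Phi$ restricts to an a priori arbitrary equivariant map there. Turning those calculations into a statement about all equivariant maps is exactly the content the paper has to supply: the homotopy uniqueness on the non-free part (Proposition~\ref{prop:shielding}, Remark~\ref{rem:shielding}), the equivariant Hopf degree argument modulo $8$ using the map of Lemma~\ref{lem:degree} for $t=1$ (Section~\ref{sec:degree}), and the smooth transversality/zero-counting argument built on the obstruction cocycle computations for $t=0$ (Section~\ref{sec:transversal}). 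Your proposal defers precisely this to ``the same $G$-index'' and a ``Sarkaria-style tensor via Gale duality,'' but the index provably cannot reach the claimed dimension, and Gale duality plays no role in your setup once $\Phi$ is built, so that paragraph does not fill the hole. As written, the proposal proves the theorem only in the case $t=-1$.
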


Theorem~\ref{thm:two-transversal} generalizes Theorem~\ref{thm:hyperplane-exact}, parts (i), (ii), and~(iii). As with Theorem~\ref{thm:dolnikov}, the above theorems can be extended to arbitrary compact convex sets by approximation. The first non-trivial exact bound for hyperplane mass equipartitions for $k=2$ hyperplanes is due to Hadwiger~\cite{Ha66}, who showed that $\Delta(2,2) = 3$. In this instance, Theorem~\ref{thm:two-transversal} states the following: Given two families of polytopes in~$\R^3$, neither of which contains four pairwise disjoint polytopes, there are two affine planes whose union intersects each polytope from each family. 

    Our final main result gives a transversal extension of $\Delta(1,3)=3$: 
    
\begin{theorem}
\label{thm:three-transversal} 
    Let $\F$ be a finite family of polytopes in $\R^3$ such that $\chi(\KG^8(\F))=1$ (i.e., no eight polytopes are pairwise disjoint). Then ~$\F$ can be pierced by three affine hyperplanes in $\R^3$.
\end{theorem}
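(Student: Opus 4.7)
The plan is to mirror the two-step scheme that the paper uses for Theorems~\ref{thm:upper-transversal} and~\ref{thm:two-transversal}: first establish a topological Radon-type intersection theorem, then pass to the hyperplane-piercing statement via Gale duality. Since specializing Theorem~\ref{thm:upper-transversal} to $m=1$, $k=3$ gives only $d=4$, the extra ingredient needed to obtain $d=3$ here must be the same refined $(\Z/2)^3$-equivariant topology that underlies the tight equipartition bound $\Delta(1,3)=3$ from~\cite{BFHZ16}.

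First I would set up the Gale dual. Write $\F = \{P_1, \dots, P_n\}$ and let $V = \bigsqcup_j V(P_j)$ be the disjoint union of vertex sets, of total size $N$. After the standard lift to $\R^4$ by homogenization, take the Gale transform to obtain a configuration $V^* \subset \R^{N-4}$. Gale correspondence translates the existence of three piercing hyperplanes in $\R^3$ into the existence of an $8$-part signed partition of $V^*$ indexed by $\{+,-\}^3$, whose convex hulls share a common point and whose restriction to each block $V(P_j)$ does not collapse into a single orthant label (otherwise $P_j$ would lie entirely in one open orthant and avoid all three hyperplanes).

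Next I would prove the required topological Radon-type statement: whenever a point configuration in $\R^{N-4}$ is colored by the polytopes and the induced Kneser hypergraph satisfies $\chi(\KG^8(\F))=1$, the signed partition described above exists. The proof is by the configuration space/test-map paradigm. Parametrize triples of oriented affine hyperplanes in $\R^3$ by $(S^3)^3$, and construct a $(\Z/2)^3$-equivariant map into a $(\Z/2)^3$-representation whose zero locus encodes the desired common intersection. An equivariant obstruction argument, identical in spirit to the proof of $\Delta(1,3)=3$ in~\cite{BFHZ16}, forces a zero of this map provided the coloring never exhibits eight pairwise-disjoint color classes, which is precisely the hypothesis $\chi(\KG^8(\F))=1$. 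Reversing the Gale transform then yields the three piercing hyperplanes.

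The principal obstacle lies in this last topological step: the naive $(\Z/2)^3$-Borsuk--Ulam theorem applied to $(S^d)^3$ yields only $d\ge 4$, which matches the specialization of Theorem~\ref{thm:upper-transversal} to $m=1$, $k=3$ and is too weak. To recover $d=3$ one must invoke the sharper equivariant cohomological calculation of~\cite{BFHZ16}. The extra work, relative to the equipartition setting of balanced measures, is to upgrade the argument to the Dolnikov-style colored setting, where the only hypothesis is the absence of eight pairwise-disjoint polytopes. This is exactly where the Sarkaria-type non-embedding machinery advertised in the abstract enters, now applied to the simplicial complex associated to the $2^k$-uniform Kneser hypergraph at $k=3$; showing that the chromatic constraint suffices to trigger the same equivariant obstruction as the equipartition condition is the core technical point.
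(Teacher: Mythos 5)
Your proposal correctly identifies the general framework (configuration space/test map plus Gale duality), but the step that actually carries the theorem is missing, and the route you sketch for it would not go through. You propose to get dimension $3$ (rather than the $d=4$ that Theorem~\ref{thm:upper-transversal} gives at $m=1$, $k=3$) by invoking ``the sharper equivariant cohomological calculation of~\cite{BFHZ16}'' for $\Delta(1,3)=3$ and then upgrading it to the Kneser-coloring setting. But no such direct equivariant computation exists for this case: in~\cite{BFHZ16} (and classically, going back to~\cite{Ha66}) the bound $\Delta(1,3)\le 3$ is \emph{not} obtained by an obstruction calculation for three hyperplanes; it follows from $\Delta(2,2)=3$ by first bisecting with a Ham Sandwich hyperplane. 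A dimension count shows why a direct argument is problematic: in the Dolnikov-type setup for $m=1$, $k=3$, $c=3$ the configuration space $(S^{d+4})^3$ has dimension $3d+12$ while the test space $U_3\oplus(\R^{d+1})^3$ has dimension $3d+10$, so the zero set has excess dimension $2$ and the primary-obstruction/degree arguments used in this paper (and in~\cite{BFHZ16}, whose direct computations concern cases where the dimensions match, such as $\Delta(4,3)=10$) do not apply. You flag ``showing that the chromatic constraint suffices to trigger the same equivariant obstruction'' as the core technical point but do not supply it; that is precisely the content of the theorem, so the proposal has a genuine gap rather than a complete alternative proof.

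The paper closes this gap not by a new equivariant computation but by transplanting the Ham Sandwich reduction to the transversal setting (Proposition~\ref{prop:topextensions}): given $\chi(\KG^{8}(\F))\le 1$, one forms the auxiliary family $\G$ of all $4$-fold unions of pairwise disjoint members of $\F$; this family is intersecting, so Sarkaria's theorem (Theorem~\ref{thm:sarkaria}) produces a first Radon pair $(\sigma_1^+,\sigma_1^-)$ neither side of which contains a member of $\G$. Splitting $\F$ into the subfamilies $\F^{\pm}$ of sets contained in $\sigma_1^{\pm}$ then yields $\chi(\KG^{4}(\F^+\cup\F^-))\le 2$, and the $m=2$ case of Theorem~\ref{thm:two-radon} (the Gale dual of the transversal extension of Hadwiger's $\Delta(2,2)=3$, proved in Section~\ref{sec:transversal}) supplies the remaining two Radon pairs; Theorem~\ref{thm:implications} converts the three Radon pairs into three affine piercing hyperplanes in $\R^3$. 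Also note a smaller inaccuracy in your Gale-dual dictionary: three piercing hyperplanes correspond to three Radon pairs (disjoint faces with intersecting images, one pair per hyperplane), not to a single $8$-part partition whose parts share a common point, and the dual configuration must include witness points of pairwise intersections, not only vertices, for the Kneser hypergraph to be preserved.
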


\subsection{Proof scheme: Topological Radon-type results for multiple Radon pairs}
Our proof approach for hyperplane partition and transversal results appears to be new. Here we explain this approach and give an overview of the proof scheme. Ultimately, we derive our main results from certain non-embeddability results for simplicial complexes, which we collect here since they might be of independent interest.

Gale duality translates between hyperplane bisection results for $n$ points in $\R^d$ and intersection results for convex hulls of $n$ points in $\R^{n-d-1}$; see Section~\ref{sec:dolnikov} for details. Results that prescribe constraints for disjoint subsets $A$ and $B$ of a set $X\subset \R^d$ whose convex hulls intersect are referred to as Radon-type results. The classical Radon theorem~\cite{Ra21} states that any $X\subset \R^d$ of size $d+2$ admits a partition $A \sqcup B = X$ such that $\conv A \cap \conv B \ne \emptyset$. Such partitions $A \sqcup B$ are called \emph{Radon partitions}. A rather general result asserting the existence of Radon partitions that satisfy certain constraints is the following theorem due to Sarkaria~\cite{Sa90, Sa91, Ma08}:

\begin{theorem}
\label{thm:sarkaria}
	Let $\Sigma$ be a simplicial complex on $[d+n+2] = \{1,2,\dots,d+n+2\}$ such that $2^{[d+n+2]} \setminus \Sigma$ can be partitioned into $n$ families $\F_1, \dots, \F_n$, where each $\F_i$ is pairwise intersecting. Then for any continuous map $f\colon |\Sigma| \to \R^d$, there are disjoint faces $\sigma, \tau \subset |\Sigma|$ such that $f(\sigma) \cap f(\tau) \ne \emptyset$.
\end{theorem}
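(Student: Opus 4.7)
The plan is to argue by contradiction using equivariant topology on the two-fold deleted join. Set $N = d+n+2$ and suppose toward contradiction that $f \colon |\Sigma| \to \R^d$ has no Radon pair. The two-fold deleted join $\Sigma^{*2}_\Delta$, whose faces are disjoint unions $\sigma \sqcup \tau$ with $\sigma,\tau \in \Sigma$ and $\sigma \cap \tau = \emptyset$, carries the free $\Z/2$-action swapping the two copies. From $f$ I would build the $\Z/2$-equivariant map
\[
\Phi \colon |\Sigma^{*2}_\Delta| \to S^d, \qquad (1-t)x + ty \;\longmapsto\; \frac{\bigl((1-t)f(x) - t f(y),\, 1-2t\bigr)}{\bigl\|\bigl((1-t)f(x) - t f(y),\, 1-2t\bigr)\bigr\|},
\]
where the target carries the antipodal action. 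The denominator vanishes only if $t = 1/2$ and $f(x) = f(y)$ with $x \in \sigma$, $y \in \tau$, $\sigma \cap \tau = \emptyset$, i.e.\ exactly at a Radon pair; so $\Phi$ is well defined, and equivariance is immediate from the swap $(x,y,t) \leftrightarrow (y,x,1-t)$.

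The remaining step, which is the technical core, is to show that the $\Z/2$-index of $|\Sigma^{*2}_\Delta|$ is at least $d+1$, which would contradict the existence of $\Phi$ via Borsuk--Ulam. I would use Sarkaria's tensor trick, fed by the coloring $c\colon 2^{[N]} \setminus \Sigma \to [n]$ given by $c(S) = i$ when $S \in \F_i$. Pick vectors $v_1, \dots, v_n \in \R^{n-1}$ whose only linear relation is $v_1 + \cdots + v_n = 0$ (e.g.\ the vertices of a regular simplex centered at the origin), and use them together with $c$ to assemble a $\Z/2$-equivariant map $\Psi \colon S^{d+1} \to |\Sigma^{*2}_\Delta|$. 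At each point of $S^{d+1}$ one reads off a pair of candidate supports in $[N]$; the affine dependence $\sum_i v_i = 0$ together with the pairwise intersecting condition on every $\F_i$ is precisely what prevents either support from being a non-face, so the output lies in $\Sigma^{*2}_\Delta$ as required.

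Composing, $\Phi \circ \Psi \colon S^{d+1} \to S^d$ is $\Z/2$-equivariant, contradicting the Borsuk--Ulam theorem. The main obstacle is the construction of $\Psi$: one must carefully verify that the interaction between the relation $\sum_i v_i = 0$ and the combinatorial condition that same-color non-faces always share a vertex really does rule out non-face supports in both copies simultaneously, and that the resulting assignment is continuous and equivariant. This is also where the numerology $N = d+n+2$ enters naturally: the $n$ degrees of freedom from the coloring are converted by the tensor trick into precisely the $n$ units of $\Z/2$-index that would otherwise be lost when passing from the boundary of the full simplex on $N$ vertices (where the classical Radon theorem corresponds to $n=0$ and $|\Sigma^{*2}_\Delta| \cong S^{N-1} = S^{d+1}$) to the possibly much smaller subcomplex $\Sigma^{*2}_\Delta$.
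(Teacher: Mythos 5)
Your first step is fine: parametrizing candidate Radon pairs by the deleted join $\Sigma^{*2}_\Delta$ and building the $\Z_2$-map $\Phi$ to $S^d$ from a map with no Radon pair is the standard reduction, and your formula and equivariance check are correct. The gap is the second half, which you yourself flag as the core: the equivariant map $\Psi\colon S^{d+1}\to|\Sigma^{*2}_\Delta|$ is never constructed, and it is the wrong kind of statement to aim for. Producing $\Psi$ would prove that the $\Z_2$-\emph{coindex} of $\Sigma^{*2}_\Delta$ is at least $d+1$, which is strictly stronger than what the theorem needs (only that there is no equivariant map \emph{to} $S^d$, i.e.\ an index bound), and you give no reason why an arbitrary $\Sigma$ satisfying the coloring hypothesis should admit an antipodal $(d+1)$-sphere inside its deleted join. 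Worse, the one sentence explaining why the construction should work misuses the hypothesis: a point of $\Sigma^{*2}_\Delta$ requires that neither support \emph{contains} any member of any $\F_i$, and the pairwise-intersecting condition cannot rule that out (the full ground set contains plenty of non-faces); what it actually rules out is two \emph{disjoint} non-faces of the \emph{same} color appearing one in each support. That is exactly how the hypothesis enters Sarkaria's argument, which runs in the opposite direction from yours: one maps the big sphere $(\Delta_{d+n+1})^{*2}_\Delta\cong S^{d+n+1}$ equivariantly \emph{into} the join $\Sigma^{*2}_\Delta\ast S^{n-1}$, where the $S^{n-1}$ factor is the deleted join of the simplex on the $n$ colors and the intersecting condition is what keeps the color part in that deleted join; subadditivity of the index under joins plus Borsuk--Ulam then gives $\mathrm{ind}(\Sigma^{*2}_\Delta)\ge d+1$. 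The ``tensor trick'' with vectors $v_1,\dots,v_n$ whose only relation is $\sum_i v_i=0$ comes from Sarkaria's proof of Tverberg-type theorems and does not produce maps of spheres into deleted joins.

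For comparison: the paper does not reprove Theorem~\ref{thm:sarkaria} (it is quoted from Sarkaria and Matou\v sek), and its closest in-house argument, the $k=1$ case of Theorem~\ref{thm:upper}, also avoids any coindex claim. There one works on the deleted join of the \emph{full} simplex, i.e.\ on $S^{d+n+1}$ (after extending $f$ from $|\Sigma|$ to the whole simplex by Tietze), and augments the Radon test map $R$ (your $\Phi$ before normalizing) by $n$ extra coordinates built from distances to the subcomplexes $K^j_{\pm}$ determined by the families $\F_j$; a zero of the resulting $\Z_2$-map $S^{d+n+1}\to\R^{(d+1)+n}$, guaranteed by ordinary Borsuk--Ulam (Proposition~\ref{prop:cohomology} with $k=1$), is precisely a Radon pair neither of whose sides contains a member of any $\F_j$. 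To repair your write-up, replace the coindex step by one of these two index-type arguments; as it stands, the central claim is unproved and the sketched mechanism for it does not match what the hypothesis provides.
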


This result indeed guarantees the existence of certain Radon partitions: The simplicial complex $\Sigma$ encodes which subsets of $[d+n+2]$ are permitted to appear in Radon partitions; for face-wise linear $f\colon |\Sigma| \to \R^d$ the image $f(\sigma)$ of a face $\sigma$ of $\Sigma$ is the convex hull of the images of its vertex set. In Section~\ref{sec:dolnikov} we will show that Dolnikov's theorem follows from Theorem~\ref{thm:sarkaria} via Gale duality. 

To prove our main results we apply Gale duality to the vertices of all polytopes in $\bigcup \F_i$ and points at their pairwise intersections. We thus need to establish a Radon-type result (for multiple Radon partitions that interact in a specific way) to establish hyperplane transversal results.
	
We denote the simplex on vertex set $[n+1]=\{1,\ldots, n+1\}$ by~$\Delta_n$. Our notation does not distinguish between $\Delta_n$ as an abstract simplicial complex and its geometric realization. Given a continuous map ${f\colon \Delta_n\rightarrow \R^d}$, we say that a tuple of faces $(\sigma^+,\sigma^-)$ of $\Delta_n$ is a \emph{Radon pair} for $f$ if $\sigma^+\cap \sigma^-=\emptyset$ and  $f(\sigma^+)\cap f(\sigma^-)\ne \emptyset$. The Gale dual of Theorem~\ref{thm:two-transversal} is the linear case of the following Radon-type result:

\begin{theorem}
\label{thm:two-radon}
	Let $m=2^s+t$, where $s\geq 1$ is an integer and $t\in \{-1,0, 1\}$. Let $d\geq 1$ be an integer and let $n \ge d+\frac32m+2$.
	Let $\F$ be a family of subsets of~$[n]$ with $\chi(\KG^4(\F)) \le m$. Then for any continuous map $f\colon \Delta_{n-1} \to \R^d$, there are two Radon pairs $(\sigma^+, \sigma^-)$ and $(\tau^+,\tau^-)$ for~$f$ such that none of the four intersections $\sigma^\pm \cap \tau^\pm$ contains a set from~$\F$.
\end{theorem}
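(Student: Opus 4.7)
My plan is to prove Theorem~\ref{thm:two-radon} by extending the equivariant topology proof of the topological Radon theorem to two simultaneous Radon pairs, combined with a Sarkaria-type connectivity estimate driven by the chromatic hypothesis.

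Let $J = (\Delta_{n-1})^{*2}_\Delta$ denote the 2-fold 2-wise deleted join, whose points $\sum_i \alpha_i(i,+) + \sum_j \beta_j(j,-)$ encode ordered pairs of disjoint faces $(\sigma^+,\sigma^-)$ with convex weights; $J$ is homeomorphic to $S^{n-1}$ and carries the free $\Z/2$-action swapping the two signs. A candidate pair of Radon pairs lives in $X = J \times J$, equipped with the induced $(\Z/2)^2$-action. The map $f$ induces an equivariant test map
\[
\Phi\colon X \to V := (\R^d \oplus \R) \oplus (\R^d \oplus \R),
\]
whose two summands are the standard Borsuk--Ulam tests for each Radon pair (namely $\sum_i\alpha_i f(i) - \sum_j\beta_j f(j)$ paired with $\sum_i\alpha_i - \sum_j\beta_j$); so $\Phi$ vanishes at a point exactly when the underlying data $((\sigma^+,\sigma^-),(\tau^+,\tau^-))$ form two Radon pairs for $f$. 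To enforce the avoidance condition, I restrict to the $(\Z/2)^2$-invariant subcomplex $Y \subseteq X$ of configurations in which no quadrant $\sigma^\epsilon \cap \tau^\delta$ contains a set $F \in \F$.

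The chromatic bound $\chi(\KG^4(\F)) \le m$ provides a partition $\F = \F_1 \sqcup \cdots \sqcup \F_m$ with no four pairwise disjoint members in any $\F_i$, which feeds into a Sarkaria-type connectivity estimate for $Y$. Heuristically, if all four quadrants of some configuration contained members of a single $\F_i$, those four sets would be pairwise disjoint (since the quadrants are), contradicting the hypothesis; so the bad locus $X \setminus Y$ is controlled by how many color classes can simultaneously block a configuration. Exploiting the four-wise (as opposed to pairwise) structure of this constraint should yield an equivariant connectivity of $Y$ of roughly $n - \tfrac{3}{2}m - 2$, sharper than the naive $n - 2m$ that comes from treating quadrants independently. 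An equivariant Borsuk--Ulam theorem for $(\Z/2)^2$---via a Fadell--Husseini or bi-index computation on $Y$ versus $V \setminus \{0\}$---then forces $\Phi|_Y$ to vanish, producing the desired two Radon pairs. The hypothesis $n \ge d + \tfrac{3}{2}m + 2$ is precisely the threshold at which the connectivity bound dominates the relevant target dimension.

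The main obstacle is the sharp $\tfrac{3}{2}m$ Sarkaria-type estimate; this is the Gale-dual analogue of the cohomological computation underlying the MLVZ bound for $\Delta(m,2)$. Treating the two Radon pairs independently only gives $n - 2m - 2$, leaving a gap of $\tfrac{m}{2}$. Closing the gap requires harnessing the simultaneous 4-wise disjointness forced by the four quadrants together, and the near-tight regime $m = 2^s + t$ with $t \in \{-1,0,+1\}$ likely demands a constraint-method reduction in the spirit of Blagojevi\'c--Frick--Ziegler, introducing auxiliary vertices or weight constraints to bridge a looser topological inequality to the exact numerical result stated in the theorem.
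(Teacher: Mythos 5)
Your setup (deleted joins, the product parametrizing two Radon pairs, the Borsuk--Ulam-type test for each pair) is in the right spirit, but the proposal has a genuine gap at exactly the step you flag as the ``main obstacle,'' and the mechanism you propose for closing it cannot work as stated. First, the quantitative mismatch: you restrict to the invariant subcomplex $Y$ of $\F$-avoiding configurations, hope for connectivity of order $n-\tfrac32 m-2$, and then invoke a $(\Z/2)^2$-Borsuk--Ulam theorem against the target $V$ with $\dim V = 2d+2$. Under the hypothesis $n \ge d+\tfrac32 m+2$ your claimed connectivity is only about $d$, while to force a zero of a $(\Z/2)^2$-map $Y \to V\setminus\{0\}$ via connectivity alone you would need to exclude the Euler class $u_1^{d+1}u_2^{d+1}$ (degree $2d+2$) from the Fadell--Husseini index of $Y$, i.e.\ roughly $(2d+1)$-connectivity, forcing $n \gtrsim 2d+\tfrac32 m$. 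What saves the full product $S^{n-1}\times S^{n-1}$ is not its connectivity (which is only $n-2$) but its exact index $\langle u_1^{n}, u_2^{n}\rangle$; for your subcomplex $Y$ you would need comparably precise index information, which no Sarkaria-type connectivity estimate supplies. This is precisely why the paper does not restrict the domain at all: it keeps the whole product of spheres and folds the $\F$-avoidance into the test map, adding a distance-to-subcomplexes component $D\colon \Sigma(n,2)\to U_2^{\oplus m}$ built from the partition $\F=\F_1\cup\dots\cup\F_m$ (no $\F_j$ containing four pairwise disjoint sets), so that the only space whose equivariant topology must be understood is $(S^n)^2$ itself.

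Second, and independently, the symmetry group $(\Z/2)^2$ is too small for the tight cases $t\in\{0,1\}$. A $\Z_2^2$-cohomological computation on the two-pair configuration space yields only the bound of Theorem~\ref{thm:upper}, i.e.\ $U(m,2)=2^{p+1}+q$, which equals $\lceil\tfrac32 m\rceil$ exactly when $m=2^{p+1}-1$; this is why the $t=-1$ case is ``cheap'' while $t=0,1$ are not. The gain from $2m$ down to $\tfrac32 m$ in the remaining cases comes from the extra symmetry interchanging the two Radon pairs, i.e.\ from working $D_4$-equivariantly, and even then it does not follow from an index or connectivity bound: for $t=1$ the paper needs the nonvanishing degree mod $8$ of a specific $D_4$-map (Lemma~\ref{lem:degree}, imported from the Gr\"unbaum--Hadwiger--Ramos literature) together with an equivariant Hopf-type theorem, and for $t=0$ it needs the relative equivariant obstruction theory computations on the Fox--Neuwirth cell structure plus a smooth transversality and orbit-parity argument. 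Your closing remark that a ``constraint-method reduction'' would ``likely'' bridge the gap is an acknowledgement that this central input is absent; without it the proposal proves at best the $t=-1$ case and does not establish the theorem.
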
 

Given a generic set $X$ of $d+2$ points in~$\R^d$ there is a unique way to partition $X$ into $A$ and $B$ with $\conv A \cap \conv B \ne \emptyset$. Theorem~\ref{thm:two-radon} asserts that for $X \subset \R^d$ with $d+4$ points, there are two such partitions $A \sqcup B$ and $C \sqcup D$ that are maximally different, that is, the intersections $A \cap C$, $A \cap D$, $B\cap C$, and $B \cap D$ have at most $\frac{d+4}{4}$ points. This follows by letting $m=1$ and $\F$ the family of subsets of $[d+4]$ that have more than $\frac{d+4}{4}$ elements.

Theorem~\ref{thm:two-radon} and its relatives for more than two Radon pairs imply our main results on hyperplane transversals. To establish these results we use a standard translation to nonexistence results for certain equivariant maps. If Theorem~\ref{thm:two-radon} failed, then there would exist a continuous map $S^n \times S^n \to \R^{2d+2} \times \R^{3m}$ that avoids zero and which commutes with certain actions of the Dihedral group $D_4$. Here $S^n \times S^n$ parametrizes all possible Radon pairs $(\sigma^+, \sigma^-)$ and $(\tau^+,\tau^-)$. The $D_4$-action independently swaps $\sigma^+ \leftrightarrow \sigma^-$ and $\tau^+ \leftrightarrow \tau^-$ and interchanges $(\sigma^+, \sigma^-)$ with $(\tau^+,\tau^-)$. We will explain this translation to the equivariant setup in Section~\ref{sec:general}. The non-existence results for equivariant maps that we establish are new, and in fact extend earlier non-existence results that were derived in the context of hyperplane equipartitions. 

\section{Dolnikov's theorem via Gale duality}
\label{sec:dolnikov}

In this section we provide a new proof of Theorem~\ref{thm:dolnikov} which (essentially) exhibits this result as the Gale dual of the linear version of Sarkaria's theorem. For the sake of completeness, we first review the construction of the Gale transform as well as some of its essential properties which we use throughout the paper. For a more complete exposition, see e.g.~{\cite[Ch.~5.6]{Ma12}}.

\subsection{Gale Transform: Construction and Properties} Let $a_1,\ldots, a_n$ be a sequence of $n$ points in $\R^d$ whose affine hull is~$\R^d$. In particular, $n\geq d+1$. For each $j\in [n]$, one lets $a_j'=(a_j,1)$ be the vector in $\R^{d+1}$ obtained by appending $1$ to $a_j$, and denotes by $A$ the $(d+1)\times n$ matrix whose columns are the $a_j'$. As the $a_j$ affinely span $\R^d$, $\rank(A)=d+1$. Given any basis $v_1,\ldots, v_{n-d-1}$ for the nullspace $V=\Null(A)$, one lets $B$ be the $(n-d-1)\times n$ whose rows are the $v_i$. The Gale transform of $a_1,\ldots, a_n$ is defined to be the sequence $b_1,\ldots, b_n$ in $\R^{n-d-1}$ made of the columns of~$B$. It is straightforward to verify that (1) the $b_j$ linearly span $\R^{n-d-1}$ and 
that (2) $\sum_{j=1}^n b_j=0$. Moreover, one sees that any sequence $b_1,\ldots, b_n$ of points in $\R^{n-d-1}$ satisfying (1) and (2) is the Gale transform of some sequence $a_1,\ldots, a_n$ of points in $\R^d$ which affinely span ~$\R^d$.

For our purposes, the main property of the Gale transform that we use is the duality it provides between Radon pairs of point sets in $\R^d$ and partitions of point sets in $\R^{n-d-1}$ by linear hyperplanes. The precise formulation of the connection we need is stated as Proposition~\ref{prop:Gale} below. We shall say that a Radon pair $(A^+,A^-)$ of a finite point set $A\subset \R^d$ is ~\emph{minimal} if $\conv(A^+)\cap \conv(A')=\emptyset$ for any proper subset $A'\subset A^-$ and likewise $\conv(A')\cap \conv(A^-)=\emptyset$ for any proper subset $A'\subset A^+$.

\begin{proposition}
\label{prop:Gale} Let $d$ and $n$ be positive integers with $n\geq d+1$.
\begin{compactenum}[(i)]
\item Suppose that $(A^+,A^-)$ is a minimal Radon pair for a set $A=\{a_1,\ldots, a_n\}$ of $n$ points in $\R^d$ whose affine hull is $\R^d$. Let $I^+=\{j\in [n]\colon a_j\in A^+\}$ and $I^-=\{j\in [n]\colon a_j\in A^-\}$ be the corresponding index sets. Let $b_1,\ldots, b_n$ in $\R^{n-d-1}$ be the Gale transform of the sequence $a_1,\ldots, a_n$. Then there exists a linear hyperplane $H$ in $\R^{n-d-1}$ such that $B^+=\{b_j\colon j\in I^+\}\subset H^+$, $B^-=\{b_j\colon j\in I^+\}\subset H^-$, and all remaining $b_j$ lie on $H$.

\item Let $B=\{b_1,\ldots, b_n\}$ be a set of $n$ points in $\R^{n-d-1}$ such that $\sum_{j=1}^nb_j=0$ and which linearly span $\R^{n-d-1}$. Suppose that there is a linear hyperplane $H$ in $\R^{n-d-1}$ and non-empty subsets $B^+,B^-\subset B$ such that $B^+\subset H^+$, $B^-\subset H^-$, and all the remaining $b_j$ lie on $H$. Let $J^+=\{j\in [n]\colon b_j\in B^+\}$ and $J^-=\{j\in [n]\colon b_j\in B^-\}$ be the corresponding index sets. Then there is a sequence $a_1,\ldots, a_n$ of $n$ points in $\R^d$ whose Gale transform is $b_1,\ldots, b_n$ and subsets $I^+\subset J^+$ and $I^-\subset J^-$ such that $(\conv\{a_j\colon j\in I^+\}, \conv\{a_j\colon j\in I^-\})$ is a minimal Radon pair for $A=\{a_1,\ldots, a_n\}$, $\{b_j\colon j\in I^+\}\subset B^+$, and $\{b_j\colon j\in I^-\}\subset B^-$. 
\end{compactenum}
\end{proposition}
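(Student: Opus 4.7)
The plan is to exploit the dictionary at the heart of Gale duality: since the rows of $B$ form a basis of $\Null(A)$, a vector $\lambda \in \R^n$ will satisfy $\sum_j \lambda_j a_j = 0$ together with $\sum_j \lambda_j = 0$ (equivalently, $A\lambda = 0$) if and only if there exists $c \in \R^{n-d-1}$ with $\lambda_j = \langle c, b_j\rangle$ for every~$j$. Under this correspondence, affine dependencies with a prescribed sign pattern on $[n]$ translate into linear functionals separating the $b_j$ with the matching sign pattern, so the two parts of the proposition are the two directions of a single translation. The only pieces that are not purely formal are (a) realizing a prescribed Gale transform by an actual sequence in $\R^d$, and (b) passing from an arbitrary Radon pair to a minimal one.

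For part~(i), I would observe that minimality of $(A^+, A^-)$ forces strictly positive convex coefficients $\{\alpha_j\}_{j \in I^+}$ and $\{\beta_j\}_{j \in I^-}$, each summing to~$1$, with $\sum_{j \in I^+} \alpha_j a_j = \sum_{j \in I^-} \beta_j a_j$. Setting $\lambda_j = \alpha_j$ for $j \in I^+$, $\lambda_j = -\beta_j$ for $j \in I^-$, and $\lambda_j = 0$ otherwise places $\lambda$ in $\Null(A)$. The dictionary then produces $c \in \R^{n-d-1}$ with $\lambda_j = \langle c, b_j\rangle$, and $H := c^\perp$ is the required hyperplane: $B^+ \subset H^+$, $B^- \subset H^-$, and every remaining $b_j$ lies on~$H$.

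For part~(ii), I would first construct the sequence $a_1, \ldots, a_n$. The hypothesis $\sum_j b_j = 0$ places the all-ones vector $\mathbf{1}$ in the $(d+1)$-dimensional subspace $\Null(B) \subset \R^n$; extending $\mathbf{1}$ to a basis of $\Null(B)$ and using these basis vectors as the rows of a matrix~$A$ (with $\mathbf{1}$ as the last row) yields columns of the form $(a_j, 1)$ for points $a_j \in \R^d$ that affinely span~$\R^d$, with $\Null(A) = \row(B)$, so the rows of $B$ form a basis of $\Null(A)$ and the Gale transform of $a_1, \ldots, a_n$ is exactly the given $b_1, \ldots, b_n$. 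Given a normal $c$ to $H$, the vector $\lambda_j := \langle c, b_j\rangle$ then lies in $\row(B) = \Null(A)$ and satisfies $\lambda_j > 0$ on $J^+$, $\lambda_j < 0$ on $J^-$, and $\lambda_j = 0$ elsewhere, exhibiting $(J^+, J^-)$ as a Radon pair for~$A$. To obtain a minimal pair I would greedily remove any single point whose removal preserves the Radon property; the procedure terminates in a pair $(I^+, I^-)$ with $I^+ \subset J^+$ and $I^- \subset J^-$ that is minimal by construction, automatically yielding $\{b_j : j \in I^\pm\} \subset B^\pm$.

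The step I expect to require the most care is the minimality assertion in~(ii): one must verify that greedy single-point reduction keeps both sides nonempty and produces a pair satisfying the paper's strong minimality condition, rather than merely a pair with smaller support. Nonemptiness is automatic since any Radon pair has both sides nonempty, and minimality at termination follows because a proper subset $A' \subsetneq I^-$ is contained in some $I^- \setminus \{x\}$, whose convex hull is already disjoint from $\conv(I^+)$ by the stopping criterion of the reduction. The remainder of the proof is essentially bookkeeping through the identity $\Null(A) = \row(B)$.
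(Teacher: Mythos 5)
Your proposal is correct and follows essentially the same route as the paper: both proofs run the translation $\Null(A)=\row(B)$ between signed affine dependencies (Radon pairs with prescribed supports) and linear functionals (hyperplanes with prescribed sign patterns), using minimality to get strictly positive coefficients in (i) and passing to a minimal pair by discarding points in (ii). Your explicit construction of the $a_j$ by extending $\mathbf{1}$ to a basis of $\Null(B)$, and your spelled-out greedy reduction to a minimal pair, are just more detailed versions of steps the paper handles by its preliminary remark on realizability of Gale transforms and by the phrase ``removing points if necessary.''
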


While Proposition~\ref{prop:Gale} can be established using standard methods (see, e.g.~~{\cite[Lemma.~5.6.2]{Ma12} for an essentially similar statement), for the sake of completeness we provide a proof.

\begin{proof}[Proof of Proposition~\ref{prop:Gale}]
For part (i), suppose that $(A^+,A^-)$ is a minimal Radon pair for $\{a_1,\ldots, a_n\}$ and let $I^+$ and $I^-$ be the corresponding index sets of $[n]$. In particular, there exist scalars $\lambda_1,\ldots, \lambda_n \geq 0$ such that $\lambda_j>0$ for all $j\in I^+\cup I^-$, $\sum_{j\in I^+} \lambda_j = \sum_{j\in I^-}\lambda_j=1$, and $\sum_{j\in I^+}\lambda_j a_j=\sum_{j\in I^-}\lambda_j a_j$. Defining $t=(t_1,\ldots, t_n)$ by $t_j=\lambda_j$ for $j\in I^+$, $t_j=-\lambda_j$ for $j\in I^-$, and $t_j=0$ otherwise, we have $\sum_jt_ja_j=0$ and $\sum_j t_j=0$. Using the notation of the construction of the Gale transform, we see that $t$ lies in $V=\Null(A)$ so $t=\sum_{i=1}^{n-d-1}\alpha_iv_i$ with $\alpha=(\alpha_1,\ldots, \alpha_{n-d-1})\in \R^{n-d-1}$. Considering the linear hyperplane $H=\langle \alpha \rangle^\perp$, it is easily verified that $t_j= \langle b_j,\alpha \rangle$ for all $j\in [n]$. Thus $b_j\in H^+$ for all $j\in I^+$, $b_j\in H^-$ for all $j\in I^-$, and $b_j\in H$ if $j\notin I^+\cup I^-$. 

To prove (ii), suppose that there exist non-empty subsets $B^+$ and $B^-$ of $\{b_1,\ldots, b_n\}$ and a linear hyperplane $H$ such that $B^+\subset H^+$, $B^-\subset H^-$, and all other points of $b_j$ lie on $H$. Let $J^+$ and $J^-$ be the corresponding index sets of $[n]$. We have that $J^+\cap J^-=\emptyset$ because the half-spaces are open. We have $H=\langle \alpha \rangle^\perp$ in $\R^{n-d-1}$ for some $\alpha=(\alpha_1,\ldots, \alpha_{n-d-1})\in \R^{n-d-1}\setminus\{0\}$. Define $t=(t_1,\ldots, t_n)$ by $t_j=\langle b_j,\alpha\rangle$ for all $j\in [n]$. Thus $t_j>0$ for all $j\in J^+$, $t_j<0$ for all $j\in J^-$, and $t_j=0$ for all $j\notin J^+\cup J^-$. 
 Letting $v_1,\ldots, v_{n-d-1}$ be the rows of the matrix $B$ whose columns are the $b_j$, one verifies that
$t=\sum_{i=1}^{n-d-1}\alpha_i v_i$, and it is again straightforward to show that $\sum_{j=1}^n t_j a_j=0$ and $\sum_{j=1}^n t_j=0$. Now define $\lambda_j=\frac{t_j}{\sum_{i\in J^+}t_i}$ if $j\in J^+$ and $\lambda_j=\frac{-t_j}{\sum_{i \in J^-}t_i}$ if $j\in J^-$. Thus $\lambda_j>0$ for all $j\in J^+\cup J^-$, $\sum_{j\in J^+}\lambda_j=\sum_{j\in J^-}\lambda_j=1$, and $\sum_{j\in J^+}\lambda_ja_j=\sum_{j\in J^-}\lambda_ja_j$. To complete the proof, by removing points if necessary we may choose $I^+\subset J^+$ and $I^-\subset J^-$ so that $A^+=\{a_i\colon i\in I^+\}$ and $A^-=\{a_i\colon i\in I^+\}$ are minimal. 
\end{proof}

\subsection{A Proof of Dolnikov's Theorem}

We now derive Theorem~\ref{thm:dolnikov} as a consequence of the linear version of Theorem~\ref{thm:sarkaria}; the reverse implication is given in Remark~\ref{rem:Soberon} following Lemma~\ref{lem:equivalence}. As we have already seen, Theorem~\ref{thm:hamsandwich} is a special case of Theorem~\ref{thm:dolnikov}. In order to use Gale duality, it will be convenient to prove the following slight reformulation of Theorem~\ref{thm:dolnikov}. Recall that $\KG(\F)=\KG^2(\F)$. 

\begin{proposition}
\label{prop:dolnikov-finite}
    Let $Y \subset \R^d$ be a finite set and let $\F$ be a family of subsets of~$Y$ such that ${\chi(\mathrm{KG}(\F)) \le d}$. Then there exists an affine hyperplane $H$ in~$\R^d$ which intersects all the convex hulls of all sets in~$\F$.
\end{proposition}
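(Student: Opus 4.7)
The plan is to derive Proposition~\ref{prop:dolnikov-finite} by applying Sarkaria's theorem (Theorem~\ref{thm:sarkaria}) to a coned simplicial complex built from $\F$, and then translating the resulting Radon-type intersection in the Gale dual back into an affine hyperplane in $\R^d$. By enlarging $Y$ with generic auxiliary points if necessary --- which does not change $\F$ --- we may assume $Y = \{a_1, \ldots, a_N\}$ affinely spans $\R^d$ with $N \ge d+2$, so that its Gale transform $B = \{b_1, \ldots, b_N\} \subset \R^{N-d-1}$ is defined. The hypothesis $\chi(\KG(\F)) \le d$ furnishes a partition $\F = \F_1 \sqcup \cdots \sqcup \F_d$ into intersecting subfamilies, and the up-closure of each $\F_i$ remains intersecting.

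Let $\Sigma$ be the simplicial complex on $[N]$ whose faces are the subsets containing no $S \in \F$; its non-faces form the up-closure of $\F$ and partition into $d$ intersecting families. A direct application of Sarkaria to the map $f(j) = b_j$ fails by exactly one vertex, since Sarkaria's statement demands $|V| = (\text{target dim}) + (\text{number of families}) + 2$. To absorb this off-by-one I would cone $\Sigma$ by a new apex $\ast$ to obtain $\Sigma^\ast = \Sigma * \{\ast\}$ on $[N]\cup\{\ast\}$. A set $T \subseteq [N]\cup\{\ast\}$ is a non-face of $\Sigma^\ast$ precisely when $T\setminus\{\ast\}$ contains some $S\in\F$; coloring each such $T$ by the color of some $S\in\F_i$ contained in $T\setminus\{\ast\}$ partitions the non-faces of $\Sigma^\ast$ into $d$ pairwise intersecting families (two non-faces of the same color contain $S_1, S_2 \in \F_i$ with $S_1\cap S_2\ne\emptyset$). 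Since $|[N]\cup\{\ast\}| = (N-d-1) + d + 2$, Sarkaria now applies with target dimension $N-d-1$.

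Apply Sarkaria to the face-wise linear map $f\colon |\Sigma^\ast| \to \R^{N-d-1}$ defined by $f(j) = b_j$ for $j\in [N]$ and $f(\ast) = 0$, to produce disjoint faces $\sigma, \tau \in \Sigma^\ast$ with $f(\sigma)\cap f(\tau) \ne \emptyset$. Write this intersection as $\sum_{j\in\sigma}\lambda_j f(j) = \sum_{j\in\tau}\mu_j f(j)$ with $\lambda_j,\mu_j\ge 0$ and $\sum_{j\in\sigma}\lambda_j = \sum_{j\in\tau}\mu_j = 1$. Because $f(\ast)=0$, the apex drops out of both sides, so setting $t_j = \lambda_j$ for $j\in\sigma\setminus\{\ast\}$, $t_j = -\mu_j$ for $j\in\tau\setminus\{\ast\}$, and $t_j = 0$ otherwise yields a nonzero $t\in\R^N$ with $\sum_j t_j b_j = 0$. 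Thus $t\in\Null(B)$, and its supports satisfy $\supp_+(t)\subseteq \sigma\setminus\{\ast\}$, $\supp_-(t)\subseteq\tau\setminus\{\ast\}$, which are faces of $\Sigma$ and hence contain no $S\in\F$.

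To finish, recall that $\Null(B) = \row(A)$, where $A$ is the $(d+1)\times N$ matrix with columns $(a_j,1)$; hence $t = A^\top u$ for some $u=(w,-c)\in \R^{d+1}$ and $t_j = \langle w, a_j\rangle - c$. The affine hyperplane $H = \{x\in\R^d : \langle w, x\rangle = c\}$ places each $a_j$ in $H^+$, in $H^-$, or on $H$ according to the sign of $t_j$; since no $S\in\F$ is contained in $\supp_+(t)$ or $\supp_-(t)$, every $\conv S$ either straddles $H$ or meets it at one of its vertices, so $\conv S\cap H\ne\emptyset$. The one degeneracy to rule out is $w=0$, which would force $t$ to be a nonzero constant vector and thus $\supp_+(t) = [N]$ or $\supp_-(t) = [N]$, contradicting the containments above whenever $\F$ is nonempty. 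The main obstacle in this argument is spotting the coning trick: without adjoining $\ast$ and setting $f(\ast)=0$, the vertex count is exactly one short of Sarkaria's requirement, and the cone with $f(\ast)=0$ is precisely the device that encodes the extra Gale-dual constraint $\sum_j b_j=0$.
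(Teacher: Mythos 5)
Your proof is correct and follows essentially the same route as the paper: the statement is deduced from the linear case of Sarkaria's theorem (Theorem~\ref{thm:sarkaria}) applied to the complex whose non-faces are the supersets of members of $\F$, with Gale duality translating the resulting Radon pair into the desired affine hyperplane. Your cone apex sent to the origin plays exactly the role of the paper's extra balancing point $-\sum_{y\in Y_1} y$ (which fixes the same vertex-count/barycenter issue), and your direct computation via $\Null(B)=\row(A)$ merely replaces the appeal to Proposition~\ref{prop:Gale}, so the differences are cosmetic.
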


To see that Proposition~\ref{prop:dolnikov-finite} implies 
Theorem~\ref{thm:dolnikov}, assume Proposition~\ref{prop:dolnikov-finite} and let $\F_1, \dots, \F_d$ be as in the statement of Theorem~\ref{thm:dolnikov}. For each $\F_i$, let $Y_i\subset \R^d$ be the set that consists of (i) the vertices from each polytope $P$ of ~$\F_i$ together with (ii) a point from the intersection of any pair of distinct polytopes $P$ and $Q$ from ~$\F_i$. Now let $Y=\cup_i Y_i$ and let $\F'=\cup_i \F'_i$, where for each $i$ we define $\F'_i = \{P \cap Y_i \ : \ P \in \F_i\}$. Each $\F'_i$ is pairwise intersecting by construction, so ${\chi(\mathrm{KG}(\F')) \le d}$. By Proposition~\ref{prop:dolnikov-finite}, there is therefore an affine hyperplane $H \subset \R^d$ that intersects $\conv (P\cap Y_i)=P$ of each $P$ in $\F_i$ and every $\F_i$. We now prove Proposition~\ref{prop:dolnikov-finite}. 

\begin{proof}[Proof of Proposition~\ref{prop:dolnikov-finite} via Theorem~\ref{thm:sarkaria}]
 Identify $\R^d$ with the hyperplane in $\R^{d+1}$ with last coordinate equal to~1 and let $Y\subset \R^d$ and $\F$ be as in the statement of Proposition~\ref{prop:dolnikov-finite}. By adding points to the set $Y\subset \R^d$, if necessary, we obtain a set $Y_1$ which linearly spans~$\R^{d+1}$. Note that no set in $\F$ will contain any of the added points. Letting $Y_2 = Y_1 \cup \{-\sum_{y \in Y_1} y\}$ now gives a set which linearly spans ~$\R^{d+1}$ and has barycenter the origin. Thus $Y_2$ is the Gale transform of some finite set $X\subset\R^{n-d-2}$ which affinely spans $\R^{n-d-2}$. For concreteness, say that $Y_2=\{b_1,\ldots, b_n\}$ and that $X=\{a_1,\ldots, a_n\}$. 
 
 To avoid a slight abuse of notation, let $\F'$ be the family of subsets of $[n]=\{1,\ldots, n\}$ corresponding to the family $\F$ of subsets of the original set ~$Y$. We let $\Sigma(\F')$ denote the simplicial complex on $[n]$ whose minimal non-faces are the sets of ~$\F'$, that is, $$\Sigma(\F')=\{\sigma \subset [n] \ : \ \tau \notin \F'\,\, \text{for all}\,\, \tau\subseteq \sigma\}.$$
\noindent The set family $2^{[n]} \setminus \Sigma(\F')$ is obtained from $\F'$ by including any supersets of elements of~$\F'$, so in particular $\chi(\KG(2^{[n]} \setminus \Sigma(\F')) = \chi(\KG(\F)) \leq d$. Thus $2^{[n]} \setminus \Sigma(\F')$ can be partitioned into $\F'_1, \dots, \F'_d$, where each $\F'_i$ is pairwise intersecting. 

The linear version of Theorem~\ref{thm:sarkaria} gives two disjoint faces $\sigma^+, \sigma^-\in \Sigma(\F')$ such that $$\conv\{a_i \ : i \in \sigma^+\} \cap \conv \{a_i \ : i \in \sigma^-\} \ne \emptyset.$$ Without loss of generality, we may assume that $\{a_i \colon i \in \sigma^+\}$ and $\{a_i \colon i \in \sigma^-\}$ form a minimal Radon pair, so that by Proposition~\ref{prop:Gale} there is a linear hyperplane~$H$ in $\R^{d+1}$ such that $\{b_i \ :  i \in \sigma^+\}$  lies in $H^+$, $\{b_i \ : i \in \sigma^-\}$ lies in $H^-$, and $H$ contains every other point of ~$Y_2$. By the definition of $\Sigma(\F')$, neither $\sigma^+$ nor $\sigma^-$ can contain any set from~$\F'$. Thus neither $H^+$ nor $H^-$ can contain any set of the form $\{b_i \ : \ i \in F'\}$ with $F' \in \F'$, which is to say that neither open half space contains any set from $\F$. Therefore, $H$ intersects the convex hull of every set of $\F$. On the other hand, $\F$ is a family of subsets of the original set $Y\subset \R^d$. As the convex hull of each set of $\F$ lies in $\R^d$ and the intersection of $H$ and $\R^d$ is an affine hyperplane in~$\R^d$, the proof is complete.
\end{proof}

\section{Multiple hyperplane transversals via Radon-type results}

As for a single hyperplane, equipartition results by multiple hyperplanes are special cases of hyperplane transversal generalizations, where now the transversal consists of a union of several hyperplanes. For linear hyperplanes, by Gale duality these transversal results in turn have equivalent formulations as Radon-type results for several Radon pairs. We shall say that a Radon pair $(\sigma^+, \sigma^-)$ for $f\colon \Delta_n \to \R^d$ is \emph{minimal} if both $f(\sigma^+) \cap f(\tau) = \emptyset$ for any proper subface $\tau \subset \sigma^-$ and $f(\sigma^-) \cap f(\tau) = \emptyset$ for any proper subface $\tau \subset \sigma^+$. 

\begin{lemma}
\label{lem:equivalence} 
    Let $c \ge 1$, $d \ge 1$, $k\ge 1$, and $m\ge 1$ be integers. The following are equivalent:
    \begin{compactenum} 
        \item\label{it:radon} If $n\ge d+c+2$, then for any linear map $f\colon \Delta_{n-1}\to \mathbb{R}^d$ and for any family $\F$ of subsets of $[n]$ with $\chi(\KG^{2^k}(\F)) \le m$, there are $k$ minimal Radon pairs $(\sigma_1^+,\sigma_1^-),\ldots, (\sigma_k^+,\sigma_k^-)$ for~$f$ such that none of the 
        intersections of the form $\sigma_1^\pm \cap \dots \cap \sigma_k^\pm$ contains a set from~$\F$.
        \item\label{it:linear-piercing} For any finite family $\F$ of polytopes in $\mathbb{R}^{c+1}$ with $\chi(\KG^{2^k}(\F)) \le m$, there are $k$ linear hyperplanes which pierce $\F$. 
    \end{compactenum} 
\end{lemma}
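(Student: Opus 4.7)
The strategy is to exploit Gale duality, mirroring the single-hyperplane argument in Section~\ref{sec:dolnikov}. The core correspondence, contained in Proposition~\ref{prop:Gale}, is that when $n = d+c+2$, a minimal Radon pair $(\sigma^+,\sigma^-)$ for points $a_1, \ldots, a_n \in \R^d$ matches a linear hyperplane $H \subset \R^{n-d-1} = \R^{c+1}$ whose open half-spaces $H^\pm$ contain precisely the Gale dual points indexed by $\sigma^\pm$. Having $k$ minimal Radon pairs whose sign intersections $\sigma_1^{s_1} \cap \cdots \cap \sigma_k^{s_k}$ avoid a prescribed family $\F$ therefore translates to $k$ linear hyperplanes whose union pierces every polytope spanned by the indices of the members of $\F$.

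For $(\ref{it:radon}) \Rightarrow (\ref{it:linear-piercing})$, the plan is to follow the construction used in the proof of Proposition~\ref{prop:dolnikov-finite}. Given a polytope family $\F$ in $\R^{c+1}$ with $\chi(\KG^{2^k}(\F)) \le m$, I would assemble a finite $Y \subset \R^{c+1}$ consisting of every vertex of every $P \in \F$, a witness point in $P \cap Q$ for each intersecting pair $P, Q \in \F$, auxiliary points (chosen to lie outside every polytope in $\F$), and finally a zero-sum correction point, arranging that $|Y| = n = d + c + 2$, that $Y$ linearly spans $\R^{c+1}$, and that $\sum_i y_i = 0$. Let $\{a_1, \ldots, a_n\} \subset \R^d$ be a Gale dual and $f\colon \Delta_{n-1} \to \R^d$ the induced linear map. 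For $P \in \F$ set $F_P = \{i \colon y_i \in P\}$ and $\F' = \{F_P\}$; the intersection witnesses ensure that $F_P \cap F_Q = \emptyset$ if and only if $P \cap Q = \emptyset$, so $\chi(\KG^{2^k}(\F')) \le m$. Statement~(\ref{it:radon}) then yields $k$ minimal Radon pairs $(\sigma_j^+, \sigma_j^-)$ whose sign intersections avoid every $F_P$, and Proposition~\ref{prop:Gale}(i) converts these into $k$ linear hyperplanes $H_1, \ldots, H_k$ in $\R^{c+1}$. If some $P \in \F$ were unpierced, it would lie in an open orthant $\bigcap_j H_j^{s_j}$, forcing $F_P \subseteq \bigcap_j \sigma_j^{s_j}$, which contradicts the conclusion of~(\ref{it:radon}).

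For $(\ref{it:linear-piercing}) \Rightarrow (\ref{it:radon})$, I would set $a_i = f(v_i)$ and, after a generic perturbation combined with (when $n > d + c + 2$) restriction to a face of $\Delta_{n-1}$ supporting $d + c + 2$ vertices whose images affinely span $\R^d$, reduce to the case $n = d + c + 2$ with the $a_i$ affinely spanning $\R^d$. Take the Gale dual $\{b_1, \ldots, b_n\} \subset \R^{c+1}$ and form the polytope family $\F_{\mathrm{poly}} = \{P_F \colon F \in \F\}$ with $P_F = \conv\{b_i \colon i \in F\}$. Since polytope disjointness always implies disjointness of vertex index sets, any valid $m$-coloring of $\KG^{2^k}(\F)$ is also valid for $\KG^{2^k}(\F_{\mathrm{poly}})$, giving $\chi(\KG^{2^k}(\F_{\mathrm{poly}})) \le m$. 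Statement~(\ref{it:linear-piercing}) provides $k$ linear hyperplanes $H_j$ piercing $\F_{\mathrm{poly}}$, which Proposition~\ref{prop:Gale}(ii) converts to minimal Radon pairs $(\sigma_j^+, \sigma_j^-)$ for $f$ with $\sigma_j^\pm \subseteq J_j^\pm = \{i \colon b_i \in H_j^\pm\}$. If some $F \in \F$ were contained in $\bigcap_j \sigma_j^{s_j}$, it would also lie in $\bigcap_j J_j^{s_j}$, placing $P_F$ entirely inside the open orthant $\bigcap_j H_j^{s_j}$ and contradicting the piercing.

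I expect the main obstacle to be clean bookkeeping around degenerate configurations. In $(\ref{it:radon}) \Rightarrow (\ref{it:linear-piercing})$ one must engineer $Y$ so the Gale-duality hypotheses (linear spanning plus zero sum) are met simultaneously with the intersection-witness correspondence, and in $(\ref{it:linear-piercing}) \Rightarrow (\ref{it:radon})$ one must reduce both $n > d + c + 2$ and non-affinely-spanning image configurations to the clean $n = d + c + 2$ setting without disturbing $\F$ or its chromatic hypothesis. A secondary technical point is that Proposition~\ref{prop:Gale}(ii) only guarantees the inclusions $\sigma_j^\pm \subseteq J_j^\pm$ rather than equalities, but this suffices, since $F \subseteq \bigcap_j \sigma_j^{s_j}$ immediately yields $F \subseteq \bigcap_j J_j^{s_j}$, which is all that is needed to locate $P_F$ inside the forbidden open orthant.
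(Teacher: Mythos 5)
Your overall route is the same as the paper's: both directions are run through Proposition~\ref{prop:Gale}, with the same auxiliary point set (polytope vertices plus intersection witnesses plus padding) in one direction and the dual polytope family $\{\conv\{b_i : i\in F\}\}$ in the other, and your piercing/avoidance translations, including the remark that the inclusions $\sigma_j^\pm\subseteq J_j^\pm$ suffice, are correct. There is, however, one step in your proof of (\ref{it:radon})$\Rightarrow$(\ref{it:linear-piercing}) that fails as written: you cannot in general arrange $|Y|=n=d+c+2$. The set $Y$ is forced to contain every vertex of every polytope of $\F$ together with a witness point for each intersecting pair, and this forced collection may already have far more than $d+c+2$ elements; points can only be added to $Y$ (to achieve spanning and zero sum), never removed. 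Consequently the Gale dual of $Y$ lives in $\R^{|Y|-c-2}$ rather than $\R^{d}$, and the instance of~(\ref{it:radon}) you invoke is simply not available for your configuration. The paper instead lets $n=|Y|$ be as large as needed and applies the Radon-type statement to the resulting linear map into $\R^{n-c-2}$, i.e., in the dimension dictated by $n$ (which is how~(\ref{it:radon}) is used throughout, the Radon-type theorems being available for every $d$). As written, your argument only covers families whose forced point set has at most $d+c+2$ elements.

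A smaller issue concerns your reduction in (\ref{it:linear-piercing})$\Rightarrow$(\ref{it:radon}): a generic perturbation changes $f$, and minimal Radon pairs for the perturbed map need not be Radon pairs for $f$, so an additional limiting step is required. It is repairable --- there are finitely many pairs of disjoint faces, so along a sequence of perturbations tending to $f$ one may fix the $k$ pairs, pass to the limit to retain $f(\sigma_j^+)\cap f(\sigma_j^-)\neq\emptyset$, and then pass to minimal subpairs, which only shrinks the intersections $\sigma_1^\pm\cap\dots\cap\sigma_k^\pm$ --- but it must be said. The paper avoids perturbation altogether by restricting to a suitable subset of vertices and decreasing $d$ to the dimension of the affine hull of the images, which keeps the Gale dual in $\R^{c+1}$ without altering $f$.
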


\begin{remark}
\label{rem:Soberon}
Using Lemma~\ref{lem:equivalence}, let us now show how the linear case of Sarkaria's theorem follows from Dolnikov's theorem. Assuming Theorem~\ref{thm:dolnikov}, let $\F$ be a finite family of polytopes in $\mathbb{R}^{d+1}$ with $\chi(\KG(\F)) \le d$. Adding the singleton containing the origin to $\F$ if necessary gives a finite family $\F'$ of polytopes in $\R^{d+1}$ with $\chi(\KG(\F'))\leq d+1$.  Theorem~\ref{thm:dolnikov} guarantees a hyperplane  which pierces each polytope in $\F'$, and this hyperplane necessarily passes through the origin while intersecting every polytope in $\F$. This establishes (\ref{it:linear-piercing}) and therefore (\ref{it:radon2}) of Lemma~\ref{lem:equivalence} when $k=1$ and $c=m$. On the other hand, if $\F$ is a family of subsets of $[d+m+2]$ with $\chi(\KG(\F))\leq m$, let $\Sigma=\{\sigma\subset [d+m+2]\colon \tau\notin \F\,\text{for all}\, \tau\subseteq \sigma\}$ be the simplicial complex of non-faces. Any given linear map $f\colon \Sigma\rightarrow \mathbb{R}^d$ extends linearly to a linear map from $\Delta_n$ to $\mathbb{R}^d$, so Lemma~\ref{lem:equivalence}(\ref{it:radon2}) gives the linear case of Theorem~\ref{thm:sarkaria}. \end{remark}

\begin{proof}[Proof of Lemma~\ref{lem:equivalence}]
Assuming~(\ref{it:radon}), let $\F$ be a finite family of polytopes in $\mathbb{R}^{c+1}$ with $\chi(\KG^{2^k}(\F)) \le m$. As before, let $Y$ be the set that contains all vertices of all these polytopes as well as a point in the intersection of any two polytopes $P, Q \in \F$ with $P \cap Q \ne \emptyset$. Again as before, we may artificially add points to $Y$ if necessary so that $Y = \{y_1, \dots, y_n\}$ affinely spans $\R^{c+1}$ and $\sum_{y \in Y} y= 0$. Thus $Y$ is the Gale transform of some $X = \{x_1, \dots, x_n\} \subset \R^{n-c-2}$. Let $\varphi\colon Y \to [n]$ be the bijection $\varphi(y_i) = i$. The corresponding points $x_1, \dots, x_n$ in $\R^{n-c-2}$ thus determine a linear map $f\colon \Delta_{n-1} \to \R^d$ with $d = n-c-2$. 

 Let $\F' = \{\varphi(P \cap Y) \ : \ P \in \F\}$. By construction of the set $Y$, it follows that $\KG^{2^k}(\F') = \KG^{2^k}(\F)$ and so $\chi(\KG^{2^k}(\F')) \le m$. By~(\ref{it:radon}), there are $k$ minimal Radon pairs $(\sigma_1^+, \sigma_1^{-}), \dots, (\sigma_k^+, \sigma^{-}_k)$ for~$f$ such that none of the $2^k$ intersections of the form $\sigma_1^\pm \cap \dots \cap \sigma_k^\pm$ contains any set in~$\F'$. Each pair $(\sigma_j^+, \sigma_j^{-})$ determines intersecting convex hulls $f(\sigma_j^+) \cap f(\sigma_j^-) \ne \emptyset$ of points in~$X$. By Proposition~\ref{prop:Gale}, for each $j\in[k]$ there is therefore a linear hyperplane $H_j \subset \R^{c+1}$ such that $\{y_i  \ : \ i \in \sigma_j^+\}$ lies in~$H_j^+$, $\{y_i  \ : \ i \in \sigma_j^-\}$ lies in $H_j^-$, and $H_j$ passes through every other point of~$Y$. 
    
    Now let $P \in \F$ be arbitrary. Letting $A = P \cap Y$ we have that $\varphi(A) \in \F'$ and $\conv A = P$. By (\ref{it:radon2}), there is some $j \in [k]$ such that $\varphi(A)$ is not fully contained in either $\sigma_j^+$ or~$\sigma_j^-$. Thus $A=\{y_i \ : \ i \in \varphi(A)\}$ is neither completely to the open positive side nor to the negative side of~$H_j$, and therefore $H_j$ intersects $P = \conv A$.
    
   The proof that (\ref{it:linear-piercing}) implies (\ref{it:radon2}) essentially follows the proof of ~\cite[Theorem 5]{So15} via Gale duality and the ham sandwich theorem. Given a linear map $f\colon \Delta_{n-1}\to \mathbb{R}^d$ and a family $\F$ of subsets of $[n]$ with ${\chi(\KG^{2^k}(\F)) \le m}$ as in~(\ref{it:radon}), let $x_i = f(i)$ and $X= \{x_1, \dots, x_n\}$. We may  assume that $n = d+c+2$, since otherwise one may simply restrict to a subset of the vertices. By possibly decreasing $d$, we may assume that $X$ affinely spans~$\R^d$. Denote the Gale transform of $X$ by $Y = \{y_1, \dots, y_n\} \subset \R^{n-d-1} = \R^{c+1}$ and let $\F'$ be the family consisting of all polytopes of the form $P_A=\conv \{y_i \ : \ i \in A\}$ for each $A \in \F$. If $P_A\cap P_B=\emptyset$, 
    then $A \cap B = \emptyset$. Thus $\KG^{2^k}(\F')$ is contained in~$\KG^{2^k}(\F)$, and so in particular $\chi(\KG^{2^k}(\F')) \le m$.
    
    Assuming~(\ref{it:linear-piercing}), there are $k$ linear hyperplanes $H_1, \dots, H_k \subset \R^{c+1}$ such that every polytope in $\F'$ is intersected by $\cup_i H_i$. As before, let $\varphi\colon Y \to [n]$ be the bijection $\varphi(y_i) = i$. By Proposition~\ref{prop:Gale}, for each $j\in [k]$ there is minimal Radon pair $(\sigma_j^+,\sigma_j^-)$ for $f$ with $\sigma_j^+\subset  \varphi(H_j^+ \cap Y)$ and $\sigma_j^- \subset \varphi(H_j^- \cap Y)$ for each $j \in [k]$. Now let $A \in \F$ be arbitrary. We have that $\conv \{y_i \ : \ i \in A\}$ is in $\F'$ and so must be pierced by some~$H_j$. Thus $\{y_i\colon i\in A\}$ does not lie in either open half-space determined by $H_j$, so $A$ cannot be contained in either ~$\sigma_j^+$ or ~$\sigma_j^-$. In particular, $A$ does not lie in any of the intersections $\sigma_1^\pm\cap\cdots\cap\sigma_k^\pm$.
\end{proof}

The following is our central result on the connection between Radon-type theorems, hyperplane transversality, and mass equipartitions.

\begin{theorem}
\label{thm:implications}
    Let $c \ge 1$, $d \ge 1$, $k\ge 1$, and $m\ge 1$ be integers. Of the statements below, (\ref{it:radon2}) implies~(\ref{it:affine-piercing}), which in turn implies (\ref{it:equipart}).
    \begin{compactenum} 
        \item\label{it:radon2} If $n\ge d+c+2$, then for any linear map $f\colon \Delta_{n-1}\to \mathbb{R}^d$ and for any family $\F$ of subsets of $[n]$ with $\chi(\KG^{2^k}(\F)) \le m$, there are $k$ minimal Radon pairs $(\sigma_1^+,\sigma_1^-),\ldots, (\sigma_k^+,\sigma_k^-)$ for~$f$ such that none of the
        intersections of the form $\sigma_1^\pm \cap \dots \cap \sigma_k^\pm$ contains a set from~$\F$.
        \item\label{it:affine-piercing} For any finite family $\F$ of polytopes in $\mathbb{R}^{c}$ with $\chi(\KG^{2^k}(\F)) \le m$, there are $k$ affine hyperplanes which pierce $\F$.
        \item\label{it:equipart} For finite point sets $X_1, \dots, X_m \subset \R^c$, there are $k$ affine hyperplanes $H_1, \dots, H_k$ such that each of the 
        orthants $H_1^\pm \cap \dots \cap H_k^\pm$ contains no more than $\frac{1}{2^k}|X_i|$ points of each~$X_i$, i.e., $\Delta(m,k)\leq c$.
    \end{compactenum} 
\end{theorem}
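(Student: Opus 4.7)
The plan is to split the proof into the two implications $(\ref{it:radon2}) \Rightarrow (\ref{it:affine-piercing})$ and $(\ref{it:affine-piercing}) \Rightarrow (\ref{it:equipart})$. The first will be obtained by combining Lemma~\ref{lem:equivalence}, which already converts $(\ref{it:radon2})$ into a piercing statement by linear hyperplanes in $\R^{c+1}$, with the standard lift trick that moves between affine hyperplanes in $\R^c$ and linear hyperplanes in $\R^{c+1}$. The second mirrors the derivation of the Ham Sandwich theorem from Dolnikov's theorem shown in the introduction, now with the majority threshold $|X_i|/2$ replaced by $|X_i|/2^k$ to match the $2^k$ orthants cut out by $k$ hyperplanes.

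For $(\ref{it:radon2}) \Rightarrow (\ref{it:affine-piercing})$, given $\F$ in $\R^c$ with $\chi(\KG^{2^k}(\F)) \le m$, I would lift each $P \in \F$ to $P^\uparrow := P \times \{1\} \subset \R^{c+1}$. The resulting family $\F^\uparrow$ has the same pairwise intersection pattern as $\F$, and hence the same $2^k$-Kneser hypergraph, so $\chi(\KG^{2^k}(\F^\uparrow)) \le m$. Lemma~\ref{lem:equivalence} then produces $k$ linear hyperplanes $H_1',\dots,H_k' \subset \R^{c+1}$ piercing $\F^\uparrow$. Each $H_j'$ distinct from $\R^c \times \{0\}$ meets $\R^c \times \{1\}$ in an affine hyperplane of $\R^c$; any $H_j' = \R^c \times \{0\}$ pierces nothing in $\F^\uparrow$ and can be replaced by an arbitrary affine hyperplane of $\R^c$. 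Collecting these yields $k$ affine hyperplanes in $\R^c$ whose union pierces $\F$.

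For $(\ref{it:affine-piercing}) \Rightarrow (\ref{it:equipart})$, given $X_1,\dots,X_m \subset \R^c$ I would set $\F_i = \{\conv A : A \subset X_i,\ |A| > |X_i|/2^k\}$ and $\F = \bigcup_i \F_i$. Any $2^k$ pairwise disjoint members of $\F_i$ would correspond to pairwise disjoint subsets of $X_i$ of total size strictly exceeding $2^k \cdot |X_i|/2^k = |X_i|$, which is impossible; hence $\chi(\KG^{2^k}(\F_i)) \le 1$ and so $\chi(\KG^{2^k}(\F)) \le m$. Applying $(\ref{it:affine-piercing})$ then provides $k$ affine hyperplanes $H_1,\dots,H_k$ piercing $\F$. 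If some open orthant $O = H_1^{\pm}\cap \cdots \cap H_k^{\pm}$ contained more than $|X_i|/2^k$ points of $X_i$, then $\conv(O \cap X_i)$ would be a polytope in $\F_i \subset \F$ disjoint from every $H_j$, contradicting the piercing property.

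I do not anticipate serious obstacles; both implications are clean reductions once Lemma~\ref{lem:equivalence} is in hand. The only subtle point is the wasted-hyperplane phenomenon in $(\ref{it:radon2}) \Rightarrow (\ref{it:affine-piercing})$, where a linear hyperplane parallel to the lifted copy of $\R^c$ fails to restrict to an affine hyperplane of $\R^c$; this is painless because such a hyperplane pierces nothing in the lifted family and may be freely replaced.
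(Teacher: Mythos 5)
Your proposal is correct and follows essentially the same route as the paper: Lemma~\ref{lem:equivalence} converts~(\ref{it:radon2}) into a linear-hyperplane piercing statement in $\R^{c+1}$, which is transferred to affine hyperplanes by placing $\R^c$ at height one, and~(\ref{it:equipart}) follows from~(\ref{it:affine-piercing}) via the family of convex hulls of subsets of size exceeding $|X_i|/2^k$. Your explicit handling of the degenerate case $H_j' = \R^c \times \{0\}$ is a point the paper glosses over, and it is dealt with correctly.
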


\begin{remark}
\label{rem:optimal} 
Observe that any $c\geq1$ for which any of (\ref{it:radon2}) through (\ref{it:equipart}) of Theorem~\ref{thm:implications} holds must satisfy $c\ge \lceil\frac{2^k-1}{k}m\rceil$. In particular, the dimension $d$ of Theorem~\ref{thm:two-transversal} is tight with respect to~$m$, as is the $n$ of Theorem~\ref{thm:two-radon}. 
\end{remark}

\begin{proof}[Proof of Theorem~\ref{thm:implications}]
    We first show that (\ref{it:radon2}) implies~(\ref{it:affine-piercing}). By Lemma~\ref{lem:equivalence}, we have that (\ref{it:radon2}) is equivalent to the statement: For any finite family $\F$ of polytopes in $\mathbb{R}^{c+1}$ with $\chi(\KG^{2^k}(\F)) \le m$ there are $k$ linear hyperplanes $H_1, \dots, H_k \subset \mathbb{R}^{c+1}$ such that every polytope in $\F$ is intersected by~$\cup_{i=1}^k H_i$. If instead we are given a finite family $\F$ of polytopes in $\mathbb{R}^{c}$ with $\chi(\KG^{2^k}(\F)) \le m$, we place $\R^c$ in~$\R^{c+1}$ as the hyperplane at height one. Then by the above there are $k$ linear hyperplanes $H_1, \dots, H_k \subset \mathbb{R}^{c+1}$ such that every polytope in $\F$ is intersected by~$\cup_{i=1}^k H_i$. These hyperplanes $H_i$ correspond to affine hyperplanes in~$\R^c$, thus proving~(\ref{it:affine-piercing}).
    
    To show that (\ref{it:affine-piercing}) implies (\ref{it:equipart}) (and in fact is a special case), let finite sets $X_1, \dots, X_m \subset \R^c$ be given. For each $i\in [m]$, consider the corresponding family $$\F_i = \left\{\conv A \ : \ A \subset X_i, \ |A| > \frac{1}{2^k}|X_i|\right\}.$$ Letting $\F = \bigcup_i \F_i$ we see that $\chi(\KG^{2^k}(\F)) \le m$, and so by~(\ref{it:affine-piercing}) there are $k$ affine hyperplanes $H_1, \dots, H_k \subset \mathbb{R}^{c}$ such that every polytope in $\F$ is intersected by~$\cup_{i=1}^k H_i$. If $A \subset X_j$ is any set with $|A| > \frac{1}{2^k}|X_j|$, then by construction its convex hull $\conv A$ is intersected by some hyperplane~$H_i$, so $A$ cannot lie in any open orthant determined by the hyperplanes.
\end{proof}

\section{A general bound for multiple Radon pairs}
\label{sec:general}

Over the next three sections we explain how to obtain topological Radon-type theorems from non-existence results for equivariant maps. These follow from the well-established configuration space/test map scheme and a generalization of Borsuk--Ulam type results arising from the Gr\"unbaum--Ramos--Hadwiger mass partition problem.

In this section we will prove Theorem~\ref{thm:upper-transversal}, which by Theorem~\ref{thm:implications} is a consequence of the following result about Radon pairs for continuous maps. As before, let $m=2^p+q$ where $p\ge 0$ and $0\le q<2^p$, and let $U(m,k)=2^{k+p-1}+q$ be the upper bound of Theorem~\ref{thm:hyperplane-bound} for the Gr\"unbaum--Ramos--Hadwiger problem. As $m=2^{p+1}-1$ when $k=2$ and $q=2^{p-1}-1$, the $t=-1$ case of Theorem~\ref{thm:two-radon} is an immediate consequence.

\begin{theorem}
\label{thm:upper}
    Let $m\ge 1$, $k\ge 1$, and $d\ge 1$. Suppose that $n\ge d+U(m,k)+1$. If $\F$ is a family of subsets of $[n+1]$ with $\chi(\KG^{2^k}(\F)) \le m$, then for any continuous map $f\colon\Delta_n\rightarrow \R^d$ there are $k$ Radon pairs $(\sigma_1^+, \sigma_1^{-}), \dots, (\sigma_k^+, \sigma^{-}_k)$ for~$f$ such that none of the intersections of the form $\sigma_1^\pm \cap \dots \cap \sigma_k^\pm$ contains a set from~$\F$. 
\end{theorem}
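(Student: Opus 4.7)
The plan is to run the configuration space / test map paradigm on the $k$-fold product of spheres and to invoke an extension of the equivariant non-existence theorem of Mani-Levitska--Vre\'cica--\v Zivaljevi\'c \cite{MLVZ06} that underlies Theorem~\ref{thm:hyperplane-bound}. Suppose for contradiction that the conclusion fails. Since $\chi(\KG^{2^k}(\F))\le m$, fix a partition $\F=\F_1\sqcup\cdots\sqcup\F_m$ in which no $2^k$ sets of a single color class are pairwise disjoint.

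First I would parametrize candidate $k$-tuples of Radon pairs by $X=(S^n)^k$: each coordinate $x^{(j)}\in S^n\subset\R^{n+1}$ encodes $\sigma_j^\pm=\{i:\pm x_i^{(j)}>0\}\subset[n+1]$, and $G=(\Z/2)^k$ acts by sign-flipping each factor independently, which swaps $\sigma_j^+\leftrightarrow\sigma_j^-$. Then I would construct a $G$-equivariant test map $\Phi=(\Phi_{\mathrm{Rad}},\Phi_{\mathrm{avoid}}):X\to W$ with target
\[ W=\bigoplus_{j=1}^{k}(\R^d\otimes\chi_j)\;\oplus\;\bigoplus_{i=1}^m\widetilde{\R}[G], \]
where $\chi_j$ is the sign character of the $j$-th $\Z/2$-factor of $G$ and $\widetilde{\R}[G]\cong\R^{2^k-1}$ is the augmentation ideal of the regular representation. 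The Radon component
\[ \Phi_{\mathrm{Rad}}^{(j)}(x)=\sum_{i=1}^{n+1}\mathrm{sgn}(x_i^{(j)})\,(x_i^{(j)})^2\,f(e_i) \]
vanishes exactly when $(\sigma_j^+,\sigma_j^-)$ is a Radon pair for $f$. For each color class $\F_i$, I would regularize each $A\in\F_i$ by a continuous bump $\mu_A$ supported near the barycenter of the face $\conv\{e_a:a\in A\}$ of $\Delta_n$; letting $\mu_i^\epsilon(x)$ denote the mass of $\sum_{A\in\F_i}\mu_A$ in the open $\epsilon$-orthant determined by the $\sigma_j^\pm$, define $\Phi_{\mathrm{avoid}}^{(i)}(x)\in\widetilde{\R}[G]$ as the projection of $(\mu_i^\epsilon(x))_\epsilon\in\R[G]$ onto the augmentation ideal. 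If the theorem failed, then at every $x$ either some Radon condition fails or some $A\in\F_i$ sits in a single orthant, forcing $\Phi_{\mathrm{avoid}}^{(i)}(x)\ne 0$; shrinking bump widths and passing to a subsequential limit yields a nowhere-zero $G$-equivariant map, equivalently a $G$-equivariant map $\Phi:X\to S(W)$.

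The contradiction will come from a non-existence theorem for $G$-equivariant maps $(S^n)^k\to S(W)$ whenever $n\ge d+U(m,k)$. Writing $m=2^p+q$ with $0\le q<2^p$, this is proved through a Fadell--Husseini ideal-valued index computation with $\mathbb{F}_2$-coefficients in $H^*(BG;\mathbb{F}_2)=\mathbb{F}_2[t_1,\ldots,t_k]$: the index of $(S^n)^k$ is the ideal $(t_1^{n+1},\ldots,t_k^{n+1})$, while the index of $S(W)$ contains the top Stiefel--Whitney class of $W$, which factors as $\prod_j t_j^d$ (from the Radon summand) times the MLVZ obstruction class of $m\widetilde{\R}[G]$. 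The MLVZ calculation produces a monomial in the latter class whose largest $t_j$-exponent is at most $U(m,k)$; multiplying by $\prod_j t_j^d$ gives a monomial with each $t_j$-exponent at most $d+U(m,k)\le n$, hence lying outside the index of $(S^n)^k$ and forbidding the equivariant map.

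The \emph{main obstacle} is establishing this extended non-existence: the standard MLVZ calculation treats $m\widetilde{\R}[G]$ in isolation, and one must verify that multiplication by $\prod_j t_j^d$ preserves the critical monomial produced by the mod-$2$ binomial identity tied to the binary expansion of $m$, without introducing new cancellations. Once this extended non-existence is in place, Theorem~\ref{thm:upper} follows, and Theorem~\ref{thm:upper-transversal} is then immediate from Theorem~\ref{thm:implications}.
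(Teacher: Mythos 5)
Your overall architecture is the same as the paper's (a $\Z_2^k$-equivariant test map on $(S^n)^k$ with target $m$ copies of the augmentation ideal plus a ``Radon'' summand, killed by a mod~$2$ cohomological non-existence result), but the test map you construct has genuine defects. First, your Radon component $\Phi^{(j)}_{\mathrm{Rad}}(x)=\sum_i \mathrm{sgn}(x_i^{(j)})(x_i^{(j)})^2 f(e_i)$ only sees the values of $f$ at the vertices, so its zeros certify Radon pairs for the \emph{linear} map determined by $f(e_1),\dots,f(e_{n+1})$, whereas Theorem~\ref{thm:upper} is about arbitrary continuous $f$; one must parametrize pairs of points lying in disjoint faces, which is why the paper identifies $S^n$ with the deleted join $(\Delta_n)^{*2}_\Delta$ and tests $\lambda f(x^+)-(1-\lambda)f(x^-)$. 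Second, even in the linear case your target $\R^d\otimes\chi_j$ is one dimension short: vanishing of your expression gives $\sum_{i\in\sigma_j^+}w_if(e_i)=\sum_{i\in\sigma_j^-}w_if(e_i)$ with possibly different total weights on the two sides, which does not yield intersecting convex hulls (take $d=1$, $f(e_1)=2$, $f(e_2)=1$, $w_1=1/3$, $w_2=2/3$). You need the balancing coordinate (the paper's $\lambda_j-\tfrac12$, equivalently homogenizing with a constant $1$), i.e.\ the summand $(\R^{d+1})^k=V_k$; the numerology still closes after this correction, since the critical monomial has all exponents at most $U(m,k)$ and $n-(d+1)\ge U(m,k)$.

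The avoidance component is also not justified as stated. With bumps of any fixed width, the claim ``some $A\in\F_i$ sits in a single orthant, forcing $\Phi^{(i)}_{\mathrm{avoid}}(x)\ne 0$'' fails: straddling bumps can equalize the $2^k$ masses even when one orthant wholly contains a member of $\F_i$, and conversely equal masses do not imply that no orthant contains a member of $\F_i$; your subsequential-limit repair must moreover handle the sign sets $\sigma_j^\pm$ jumping when coordinates of $x^{(j)}$ tend to zero. The paper sidesteps all of this with a purely combinatorial-metric map: for each color $j$ and each $g\in\Z_2^k$ it takes the distance to the subcomplex $K^j_g$ of configurations in which $\sigma_1^{g_1}\cap\cdots\cap\sigma_k^{g_k}$ contains no member of $\F_j$, subtracts the average, and uses the pigeonhole fact that some distance is always zero, so that vanishing forces membership in every $K^j_g$ -- exactly the desired conclusion. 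Finally, the equivariant non-existence you defer as the ``main obstacle'' is the actual heart of the proof and is not supplied; note also that multiplying by the monomial $\prod_j t_j^{d}$ cannot introduce cancellations -- the real task is to show that $\bigl(\prod_{h\ne0}(h_1t_1+\cdots+h_kt_k)\bigr)^m$ contains, with odd coefficient, a monomial all of whose exponents are at most $U(m,k)$. The paper carries this out via Wilkerson's formula for the Dickson polynomial together with a padding representation $V'$ so that, in the truncated ring, the product collapses to $u_1^n\cdots u_k^n$, and then invokes Proposition~\ref{prop:cohomology} (from \cite{Si19}), which is the Stiefel--Whitney/Fadell--Husseini criterion your plan would need in any case.
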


\subsection{Configuration Space} Let $\Delta_n^{\ast 2} =\{\sigma \ast \tau\mid \sigma,\tau\subseteq \Delta_n\}$ be the join of the $n$-simplex with itself. The deleted join $\Sigma_n:=(\Delta_n)^{\ast 2}_\Delta=\{\sigma^+\ast\sigma^-\mid \sigma^+\cap\sigma^-=\emptyset\}$ is the sub-complex of $\Delta_n^{\ast 2}$ consisting of all  formal convex sums $\lambda x^+ + (1-\lambda) x^-$, where $x^+\in \sigma^+$ and $x^-\in \sigma^-$, and for which the $\sigma^\pm$ are disjoint. This complex is realized as the boundary of the $n$-dimensional cross--polytope, that is, the unit sphere in the $\ell_1$-metric on $\R^{n+1}$, and is equipped with a $\mathbb{Z}_2$-action obtained by interchanging $\lambda x^+$ with $(1-\lambda)x^-$. Under the identification of $\Sigma_n$ with the $n$-sphere $S^n$, this action corresponds to the usual antipodal action on the sphere.  

For any $k\ge 2$, let $$\Sigma(n,k)=\Sigma_n^{\times k}=\{\sigma=(\sigma_1^+\ast \sigma_1^-,\cdots, \sigma_k^+\ast \sigma_k^-)\mid \sigma_i^+\cap \sigma_i^-=\emptyset\,\,\text{for all}\,\, 1\leq i \leq k\}$$ be the $k$-fold product of $\Sigma_n$. This product comes equipped with a canonical action of the hyperoctahedral group $\mathfrak{S}_k^\pm=\mathbb{Z}_2^k\rtimes \mathfrak{S}_k$, whereby $\mathbb{Z}_2^k$ acts by interchanging each pair $\{\sigma_i^+,\sigma_i^-\}$ of disjoint faces for any $1\le i \leq k$ and the symmetric group $\mathfrak{S}_k$ acts by permuting the $\sigma_i^+\ast \sigma_i^-$. This action is by isometries if one takes the product metric on $\Sigma(n,k)$ with the $\ell_1$-metric on each $\Sigma_n$ factor. Extending the identification of $\Sigma_n$ with $S^n$, one has a natural correspondence between $\Sigma(n,k)$ and the product $Y(n,k):=(S^n)^k$ of $n$-spheres, the later being equipped with the analogous $\mathfrak{S}_k^\pm$-action whereby $\mathbb{Z}_2^k$ acts antipodally on each coordinate sphere and $\mathfrak{S}_k$ acts by permuting each $S^n$ factor. Explicitly, let $(y_1,\ldots, y_k)\in (S^n)^k$, let $(g_1,\ldots, g_k)\in \mathbb{Z}_2^k$, and let $\alpha\in \mathfrak{S}_k$. Then 
 $$((g_1,\ldots, g_k)\rtimes \alpha)\cdot (y_1,\ldots, y_k) =((-1)^{g_1} y_{\alpha^{-1}(1)},\ldots, (-1)^{g_k} y_{\alpha^{-1}(k)}).$$
 
Observe that the action of $\mathbb{Z}_2^k$ on $\Sigma_{n,k}$ (and $Y(n,k)$) is free, and in fact it is easily observed that $g\cdot \sigma \cap \sigma=\emptyset$ for all non-identity $g=(g_1,\ldots, g_k)\in \mathbb{Z}_2^k$ and all $\sigma=(\sigma_1^+\ast \sigma_1^-,\ldots, \sigma_k^+\ast\sigma_k^-)\in \Sigma(n,k)$. On the other hand, the full action of $\mathfrak{S}_k^\pm$ is \textit{not free}, with non-free part $\Sigma(n,k)'$ consisting of all $([\lambda_1x_1^++(1-\lambda_1)x_1^-],\ldots, [\lambda_kx_k^++(1-\lambda_k)x_k^-])\in \Sigma(n,k)$ such that $\lambda_ix_i^++(1-\lambda_i)x_i^-=\pm (\lambda_jx_j^++(1-\lambda_j)x_j^-)$ for some $i\neq j$. Under the identification of $\Sigma(n,k)$ with $Y(n,k)$, this corresponds to all $(y_1,\ldots, y_k)\in Y(n,k)$ with $y_i=\pm y_j$ for distinct $i$ and $j$, which is the non-free part $Y'(n,k)$ of $Y(n,k)$ under the $\mathfrak{S}_k^\pm$-action.

\subsection{Test Space} Let $\R[\mathbb{Z}_2^k]=\left\{\sum_{g\in \mathbb{Z}_2^k} r_g g\mid r_g\in \R\,\, \text{for all}\,\, g\in \mathbb{Z}_2^k\right\}$ denote the regular representation of $\mathbb{Z}_2^k$. Considering the action of the symmetric group on coordinates, $\R[\mathbb{Z}_2^k]$ becomes a $\mathfrak{S}_k^\pm$-module, as does $U_k=\left\{\sum_{g\in \mathbb{Z}_2^k} r_g g\mid \sum_{g\in \mathbb{Z}_2^k} r_g=0\right\}$, the orthogonal complement of the trivial $\mathbb{Z}_2^k$-representation.  We consider the direct sum $U_k^{\oplus m}$ as a $\mathfrak{S}_k^\pm$-module under the diagonal action.

In addition to $U_k^{\oplus m}$, we shall also consider the $\mathfrak{S}_k^\pm$-module $V_k:=(\mathbb{R}^{d+1})^k$, where again $\mathfrak{S}_k$ acts by permuting each $\mathbb{R}^{d+1}$ factor and $\mathbb{Z}_2^k$ acts independently and antipodally on each coordinate $\mathbb{R}^{d+1}$. 

\subsection{Test Map}

Let $n=d+c+1$ for some constant $c$. To obtain $k$ Radon pairs for a given continuous map $f:\Delta_n\rightarrow \R^d$, for each $\lambda x=([\lambda_1x_1^++(1-\lambda_1)x_1^-],\ldots,[\lambda_kx_k^++(1-\lambda_k)x_k^-])\in \Sigma(n,k)$ we define $R\colon\Sigma(n,k)\rightarrow V_k$ by $$R(\lambda x)=(\lambda_1-1/2,\lambda_1f(x_1^+)-(1-\lambda_1)f(x_1^-),\ldots,\lambda_k-1/2, \lambda_k f(x_k^+)-(1-\lambda_k)f(x_k^-)).$$ It is easily observed that $R$ is equivariant with respect to the $\mathfrak{S}_k^\pm$-actions on $\Sigma(n,k)$ and $V_k$. 

Now let $\F$ be a family of subsets of the vertices of $\Delta_n$ such that~$\chi(\KG^{2^k}(\F)) \le m$. Fix a partition $\F_1 \cup \dots \cup \F_m$ of $\F$ with $\chi(\KG^{2^k}(\F_j)) \le 1$ for each $j \in [m]$. We may assume that $\F_j$ is closed under taking supersets. 
For each $\sigma=(\sigma_1^+\ast \sigma_1^-,\ldots, \sigma_k^+\ast\sigma_k^-)\in \Sigma(n,k)$, write $\sigma_i^0$ for $\sigma_i^+$ and write $\sigma_i^1$ for~$\sigma_i^-$. 
 For each $j \in [m]$ and each $g=(g_1,\ldots, g_k)\in \mathbb{Z}_2^k$, let $$K^j_{g}=\left\{\sigma=(\sigma_1^+\ast \sigma_1^-,\ldots, \sigma_k^+\ast \sigma_k^-)\in \Sigma(n,k)\mid \sigma_1^{g_1}\cap\cdots \cap \sigma_k^{g_k} \notin \F_j \right\}$$ and let $K^j=\cap_{g\in \mathbb{Z}_2^k}K^j_{g}$. Since $\F_j$ is closed under taking supersets, each $K^j_g$ and thus $K^j$ is a simplicial complex.

For each $\lambda x\in \Sigma(n,k)$ and each $g\in \mathbb{Z}_2^k$, consider the distance $d(\lambda x,K^j_g)$ from $\lambda x$ to the subcomplex $K^j_g$. Letting $$a_j(\lambda x)=\frac{1}{2^k}\sum_{g\in \mathbb{Z}_2^k} d(\lambda x,K^j_g)$$ be the average of these distances, define $D_j\colon\Sigma(n,k)\rightarrow \R[\mathbb{Z}_2^k]$ by $$D_j(\lambda x)=\sum_g (d(\lambda x,K^j_g)-a_j(\lambda x))g.$$ The image of $D_j$ thus lies in $U_k$, and so one has a mapping $D=(D_1,\ldots, D_m)\colon \Sigma(n,k)\rightarrow U_k^{\oplus m}$. Clearly, $D(\lambda x)=0$ if and only if for each fixed $g \in \mathbb{Z}_2^k$ we have that $\lambda x$ is equidistant from each~$K^j_g$. On the other hand, the sets $\sigma_1^{g_1}\cap\cdots \cap \sigma_k^{g_k}$ are pairwise disjoint. If each of these were to lie in some $\F_j$, then these intersections would all be distinct because no $\F_j$ contains the empty set. However, none of the $\F_j$ contains $2^k$ pairwise disjoint elements, and so for each $1\leq j\leq m$ and any $\lambda x\in \Sigma(n,k)$ one must have $d(\lambda x,K_{g(j)}^j)=0$ for some $g(j)\in \mathbb{Z}_2^k$. Any zero of $D$ must therefore lie in $K^j$ for all $1\leq j \leq m$, and conversely. As $g \sigma \cap \sigma=\emptyset$ unless $g$ is the identity and the $\mathbb{Z}_2^k$-action is by isometries, it is a straightforward verification that the mapping $D$ is $\mathfrak{S}_k^\pm$-equivariant with respect to the actions considered.

Letting $\iota\colon Y(n,k)\rightarrow \Sigma(n,k)$ be the natural ($\mathfrak{S}_k^\pm$-equivariant) identification of $Y(n,k)$ and $\Sigma(n,k)$, let \begin{equation}\label{eqn:F} F = (D\oplus R)\circ \iota\colon Y(n,k) \rightarrow U_k^{\oplus m}\oplus V_k.\end{equation} By construction, any zero of this map corresponds to the existence of $k$ Radon pairs that are not contained in~$\F$. Thus Theorem~\ref{thm:upper} follows if any $\mathfrak{S}_k^\pm$-map $F$ can be guaranteed a zero when $c\geq U(m,k)$. This will be established by appealing to a relevant theorem of Borsuk--Ulam type.

\subsection{Proof of Theorem~\ref{thm:upper}}

For Theorem~\ref{thm:upper}, we only need to consider the representation $U_k^{\oplus m}\oplus V_k$ when viewed as a $\mathbb{Z}_2^k$-module and apply a corresponding Borsuk--Ulam type theorem for $\mathbb{Z}_2^k$-equivariant maps. We recall that any $\mathbb{Z}_2^k$-representation is isomorphic to the direct sum of one-dimensional representations and that the latter are indexed by the group itself. Namely, for each $h=(h_1,\ldots, h_k)\in \mathbb{Z}_k$, the corresponding 1-dimensional representation $U_h$ given by $\chi_h(g)=(-1)^{h_1g_1+\cdots h_kg_k}$ for each $g\in \mathbb{Z}_2^k$. For example, for $U_k$ and $V_k$ as above one  has $U_k\cong \oplus_{h\in \mathbb{Z}_2^k-\{0\}}U_h$ and $V_k\cong \oplus_{i=1}^{k} U_{ \mathbf{e}_i}^{\oplus (d+1)}$, where $\mathbf{e}_i$ is the standard basis vector in $\mathbb{Z}_2^k$. While we shall find it convenient to use the following Borsuk--Ulam type result of ~\cite[ Proposition 6.1]{Si19} based on Stiefel--Whitney classes and the cohomology of real projective space, we note that similar results have been obtained via ideal-valued cohomological index theory~\cite{FH98} and used extensively in the field, such as in particular in ~\cite{MLVZ06} in establishing the upper bound of Theorem~\ref{thm:hyperplane-bound}. 

\begin{proposition}
\label{prop:cohomology} Let $h_1=(h_{1,1},\ldots, h_{1,k}),\ldots, h_{nk}=(h_{nk,1},\ldots, h_{nk,k})\in \mathbb{Z}_2^k$ and let $U=\oplus_{i=1}^{nk} U_{h_i}$. Let $P_U$ be the  polynomial in
$\mathbb{Z}_2[u_1,\ldots, u_k]/(u_1^{n+1},\ldots, u_k^{n+1})$
given by $$P_U(u_1,\ldots, u_k)=\prod_{i=1}^{nk} (h_{i,1}u_1+\cdots +h_{i,k}u_k).$$ If $P_U(u_1,\ldots, u_k) = u_1^n\cdots u_k^n$, then any $\mathbb{Z}_2^k$-equivariant continuous map $f\colon Y(n,k) \rightarrow U$ has a zero. 
\end{proposition}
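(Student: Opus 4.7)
The plan is to argue by contradiction through a standard equivariant obstruction argument: a nowhere-zero $\mathbb{Z}_2^k$-equivariant map produces a nowhere-zero section of an associated vector bundle over the Borel quotient, forcing its top Stiefel--Whitney class to vanish; the computation of this class will recover $P_U$, contradicting the hypothesis. Concretely, suppose $f\colon Y(n,k)\to U$ is $\mathbb{Z}_2^k$-equivariant and nowhere zero. The $\mathbb{Z}_2^k$-action on $Y(n,k)=(S^n)^k$ by coordinate-wise antipodal maps is free, with quotient $B=(\R P^n)^k$. By the Künneth formula,
$$H^*(B;\mathbb{Z}_2)\;\cong\;\mathbb{Z}_2[u_1,\ldots,u_k]\big/(u_1^{n+1},\ldots,u_k^{n+1}),$$
where $u_j\in H^1(B;\mathbb{Z}_2)$ is pulled back via the $j$-th projection $\pi_j\colon B\to \R P^n$ from the generator of $H^1(\R P^n;\mathbb{Z}_2)$. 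Form the associated real vector bundle $\xi=Y(n,k)\times_{\mathbb{Z}_2^k}U\to B$. Equivariant maps $Y(n,k)\to U$ are in bijection with sections of $\xi$, so the nowhere-zero $f$ yields a nowhere-zero section of $\xi$. Since $\xi$ has rank $nk$, its top Stiefel--Whitney class $w_{nk}(\xi)\in H^{nk}(B;\mathbb{Z}_2)$ must then vanish.

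The next step is to compute $w_{nk}(\xi)$. The decomposition $U=\bigoplus_{i=1}^{nk}U_{h_i}$ yields $\xi=\bigoplus_{i=1}^{nk}L_{h_i}$, where $L_h$ denotes the real line bundle associated to the one-dimensional representation $U_h$. Because the $\mathbb{Z}_2^k$-action on $Y(n,k)$ is the product of the individual antipodal $\mathbb{Z}_2$-actions on each sphere factor, the line bundle associated to the character $\chi_h$ is the external tensor product
$$L_h\;\cong\;\bigotimes_{j\,:\,h_j=1}\pi_j^{\,*}\gamma_n,$$
where $\gamma_n\to\R P^n$ is the tautological line bundle, for which $w_1(\gamma_n)$ generates $H^1(\R P^n;\mathbb{Z}_2)$. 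Since $w_1$ is additive under tensor products of line bundles,
$$w_1(L_h)\;=\;\sum_{j=1}^{k}h_j\,u_j\;\in\;H^1(B;\mathbb{Z}_2).$$
Applying the Whitney product formula to the direct sum $\xi=\bigoplus_i L_{h_i}$ then gives
$$w_{nk}(\xi)\;=\;\prod_{i=1}^{nk}w_1(L_{h_i})\;=\;\prod_{i=1}^{nk}\bigl(h_{i,1}u_1+\cdots+h_{i,k}u_k\bigr)\;=\;P_U(u_1,\ldots,u_k).$$

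By hypothesis $P_U(u_1,\ldots,u_k)=u_1^n\cdots u_k^n$, which is the generator of the top cohomology group $H^{nk}(B;\mathbb{Z}_2)\cong\mathbb{Z}_2$ and therefore nonzero. This contradicts $w_{nk}(\xi)=0$, so $f$ must have a zero. The main technical point is the identification of $L_h$ as a tensor product of pullbacks of the tautological bundle together with the resulting formula $w_1(L_h)=\sum_j h_j u_j$; once this is in hand, the remainder is routine bookkeeping with Stiefel--Whitney classes inside the familiar cohomology ring of $(\R P^n)^k$.
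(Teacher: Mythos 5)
Your proof is correct, and it follows exactly the route the paper indicates: the paper does not reprove this proposition but cites it from Simon's work as a result ``based on Stiefel--Whitney classes and the cohomology of real projective space,'' which is precisely your argument (nowhere-zero equivariant map $\Rightarrow$ nowhere-zero section of $Y(n,k)\times_{\mathbb{Z}_2^k}U$ over $(\R P^n)^k$ $\Rightarrow$ vanishing of $w_{nk}$, computed via the splitting into line bundles $L_{h_i}$ with $w_1(L_{h_i})=\sum_j h_{i,j}u_j$ to be $P_U$). The identification $L_h\cong\bigotimes_{j:h_j=1}\pi_j^{*}\gamma_n$ and the resulting Whitney-product computation are exactly the key points, and you have them right.
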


\begin{proof}[Proof of Theorem~\ref{thm:upper}]

 Define $U=U_k^{\oplus m}\oplus V_k\oplus V'$, where for $m=2^p+q$ as above we set $V'=\oplus_{i=2}^k U_{\mathbf{e}_i}^{\alpha_i}$ with $\alpha_i=U(m,k)-2^{p+k-i}-q2^{i-1}$. Thus $\dim U=kn$, where $n=U(m,k)+d+1$. The corresponding polynomial $P_U$ can be expressed as the product $P_U=P_{U_k}^m\cdot P_{V_k}\cdot P_{V'}$, where $P_{U_k}=\prod_{(h_1,\ldots, h_k)\in \mathbb{Z}_2^k\setminus\{0\}}(h_1u_1+\cdots +h_ku_k)$, $P_{V_k}=x_1^{d+1}\cdots x_k^{d+1}$, and $P_{V'}=x_2^{\alpha_2}\cdots x_k^{\alpha_k}$. As shown in ~\cite{Wi83}, $P_{U_k}$ is the Dickson polynomial $P_{U_k}=\sum_{\sigma\in \mathfrak{S}_k} u_{\sigma(1)}^{2^{k-1}}u_{\sigma(2)}^{2^{k-2}}\cdots u_{\sigma(k)}^1$. Thus $$P_U=\sum_{\sigma\in \mathfrak{S}_k} u_{\sigma(1)}^{2^{k+p-1}}u_{\sigma(2)}^{2^{k+p-2}}\cdots u_{\sigma(k)}^{2^p}\cdot \left(\sum_{\tau \in \mathfrak{S}_k} u_{\tau(1)}^{2^{k-1}}u_{\tau(2)}^{2^{k-2}}\cdots u_{\tau(k)}^1\right)^q\cdot u_1^{d+1}u_2^{\alpha_2+d+1}\cdots u_k^{\alpha_k+d+1}.$$ \noindent Viewing $P_U$ as a polynomial in $\mathbb{Z}_2[u_1,\ldots, u_k]/(u_1^{n+1},\ldots, u_k^{n+1})$, a consideration of the exponents $\beta_i$ in each monomial $x_1^{\beta_1}\cdots x_k^{\beta}$ in the resulting expansion of $P_U$ shows that all such terms vanish unless $\sigma(i)=\tau(k-i+1)=i$ for all $i$, in which case the mononmial is $x_1^n\cdots x_k^n$. Thus $P_{U}=u_1^n\cdots u_k^n$. By Proposition~\ref{prop:cohomology}, any $\mathbb{Z}_2^k$-equivariant map  $f\colon Y(n,k)\rightarrow U$ has a zero, and in particular so must any $\mathfrak{S}_k^\pm$-equivariant map $F\colon Y(n,k)\rightarrow U_k^{\oplus m} \oplus V_k$. 
 \end{proof}

\section{Proof of the $t=1$ case of Theorem~\ref{thm:two-radon}}
\label{sec:degree}

We prove the remaining cases of Theorem~\ref{thm:two-radon} over the course of the next two sections. By Theorem~\ref{thm:implications}, this will complete the proof of Theorem~\ref{thm:two-transversal}. Here we prove the $t=1$ case of Theorem~\ref{thm:two-radon}, which for organizational purposes we state as a separate theorem:

\begin{theorem}
\label{thm:degree}
Let $m=2^s +1$, where $s \ge 1$ and $d\ge 1$. Suppose that $n\ge d+\lceil\frac{3m}{2}\rceil+1$. If $\F$ is a family of subsets of $[n+1]$ with $\chi(\KG^{4}(\F)) \le m$, then for any continuous map $f\colon\Delta_n\rightarrow \R^d$, there are two Radon pairs $(\sigma_1^+, \sigma_1^{-}), (\sigma_2^+, \sigma^{-}_2)$ for~$f$ such that each intersection of the form $\sigma_1^\pm \cap \sigma_2^\pm$ does not contain any set from~$\F$. 
\end{theorem}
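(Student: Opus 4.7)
The plan is to invoke the configuration space/test map reduction developed in Section~\ref{sec:general}. By the same construction used in the proof of Theorem~\ref{thm:upper}, it suffices to show that for $n\geq d+\lceil 3m/2 \rceil+1$ with $m=2^s+1$, every $\mathfrak{S}_2^\pm$-equivariant continuous map $F\colon Y(n,2)=S^n\times S^n \to W:=U_2^{\oplus m}\oplus V_2$ must vanish somewhere, where $V_2=(\R^{d+1})^2$ and the actions are those of Section~\ref{sec:general}.

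The dimensional deficit is exactly one: $\dim Y(n,2)=2n\geq 2d+3m+3$ while $\dim W=3m+2d+2$, so a generic $F$ would have a $1$-dimensional zero locus. The $\mathbb{Z}_2^2$-only bound of Proposition~\ref{prop:cohomology} requires $n\geq d+U(m,2)+1=d+2^{s+1}+2$, overshooting our $n=d+3\cdot 2^{s-1}+3$ by $2^{s-1}-1$ for $s\geq 2$. The missing dimensions must therefore be recovered by exploiting the full $\mathfrak{S}_2^\pm$-symmetry, as the section title ``via degree'' suggests.

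I would proceed by a characteristic-class computation in equivariant cohomology. Assume for contradiction that $F$ is nowhere zero; then the $G$-equivariant Euler class of $W$ over $Y(n,2)$ must vanish, for $G=\mathfrak{S}_2^\pm$. Using $U_2\cong U_{\mathbf{e}_1}\oplus U_{\mathbf{e}_2}\oplus U_{\mathbf{e}_1+\mathbf{e}_2}$ and $V_2\cong U_{\mathbf{e}_1}^{\oplus(d+1)}\oplus U_{\mathbf{e}_2}^{\oplus(d+1)}$ as $\mathbb{Z}_2^2$-modules, the polynomial representative of this Euler class is $P_W=(u_1u_2(u_1+u_2))^m\cdot (u_1u_2)^{d+1}$ in $\mathbb{F}_2[u_1,u_2]/(u_1^{n+1},u_2^{n+1})$. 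For $m=2^s+1$, Lucas's theorem expands $(u_1+u_2)^m\equiv u_1^{2^s+1}+u_1u_2^{2^s}+u_1^{2^s}u_2+u_2^{2^s+1}\pmod 2$. Multiplying this into $P_W$, the corner terms $u_1^{2^s+1}$ and $u_2^{2^s+1}$ push several factors past the ideal and die, while the cross-terms $u_1u_2^{2^s}$ and $u_1^{2^s}u_2$ combine with the common factor $(u_1u_2)^{m+d}$ to produce the monomial $u_1^n u_2^n$ (or its swap). This surviving top class obstructs an $\mathfrak{S}_2^\pm$-equivariant nowhere-zero section and thus forces $F$ to vanish.

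The main obstacle is organizing the computation so that the $\mathfrak{S}_2$-swap is used honestly: the relevant Euler class lives in $H^*_{\mathfrak{S}_2^\pm}(Y(n,2);\mathbb{F}_2)$, whose additional relations (absent from the purely $\mathbb{Z}_2^2$-equivariant setting of Proposition~\ref{prop:cohomology}) are exactly what allow us to drop the dimension by $2^{s-1}-1$. Concretely, I would rewrite $P_W$ in the symmetric generators $\alpha_1=u_1+u_2$ and $\alpha_2=u_1u_2$ of $H^*(B\mathfrak{S}_2^\pm;\mathbb{F}_2)$ and track which Lucas-monomials of $(u_1+u_2)^m$ survive both the truncation ideal $(u_1^{n+1},u_2^{n+1})$ and the $\mathfrak{S}_2$-symmetrization. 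The combinatorial heart is verifying the $\bmod\,2$ accounting that the specific choice $m=2^s+1$ makes the leading Lucas-exponents $2^s+1$ align exactly with the critical codimension imposed by $n=d+\lceil 3m/2 \rceil+1$, so that a non-trivial class remains in the quotient.
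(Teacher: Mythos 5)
Your reduction to showing that every $\mathfrak{S}_2^\pm$-equivariant map $F\colon (S^n)^2\to U_2^{\oplus m}\oplus V_2$ must vanish is exactly the paper's starting point, and you correctly diagnose that the $\mathbb{Z}_2^2$-only criterion of Proposition~\ref{prop:cohomology} falls short here. But the computational heart of your proposal fails. With $m=2^s+1$ the $\mathbb{Z}_2^2$-Euler class is $P_W=(u_1u_2)^{m+d+1}\bigl(u_1^{2^s+1}+u_1^{2^s}u_2+u_1u_2^{2^s}+u_2^{2^s+1}\bigr)$, and the cross-term $u_1^{2^s}u_2$ contributes the monomial $u_1^{\,m+d+1+2^s}u_2^{\,m+d+2}$: its two exponents differ by $2^s-1$, so it is never $u_1^nu_2^n$, and since $m+d+1+2^s=2^{s+1}+d+2>n=d+3\cdot 2^{s-1}+3$ for all $s\ge 2$, \emph{every} term of $P_W$ is killed by the truncation ideal. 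Thus the mod~$2$ Euler class vanishes in precisely the range you need, and passing to symmetric generators over $\mathfrak{S}_2^\pm$ cannot resurrect it: $H^*(BD_4;\mathbb{F}_2)$ is not the polynomial ring on $u_1+u_2,\ u_1u_2$, the $D_4$-action on $(S^n)^2$ is non-free along the (anti)diagonal spheres so the Borel cohomology is not a truncated polynomial ring, and any class you produce there restricts to the vanishing $\mathbb{Z}_2^2$-class, so a mod~$2$ characteristic-class obstruction of this type cannot close the gap of $2^{s-1}-1$ dimensions.

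The paper's actual argument is of a different nature and uses finer, mod~$8$ information that is invisible to mod~$2$ cohomology. Assuming $F$ nowhere zero, one restricts to the equatorial $(S^{n-1})^2$, notes that the normalized map has degree $0$ because it factors through $(S^n)^2$, and compares it with an extension $\widetilde{G}$ of an explicit $D_4$-map $G\colon (S^{c-1})^2\to U_2^{\oplus m}$ from~\cite{BFHZ18} whose normalization has degree $\not\equiv 0 \pmod 8$ (Lemma~\ref{lem:degree}). An equivariant homotopy on the non-free part (Proposition~\ref{prop:shielding}, an obstruction-theoretic connectivity argument) together with the generalized equivariant Hopf theorem (Theorem~\ref{thm:hopf}) forces $\deg(\rho\circ\overline{F})\equiv\deg(\rho\circ\widetilde{G})\pmod{|D_4|}$, giving the contradiction. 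To repair your proposal you would need both this external input (the nonzero mod~$8$ degree of a concrete test map, which in~\cite{BFHZ18} comes from measures on a moment curve and relative equivariant obstruction theory) and the degree-congruence machinery; an Euler-class computation alone, over $\mathbb{Z}_2^2$ or over $D_4$ with $\mathbb{F}_2$ coefficients, does not suffice.
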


As the $s=1$ case of Theorem~\ref{thm:degree} is already covered by Theorem~\ref{thm:upper}, we consider $s\geq 2$ throughout the remainder of this section.

We shall restrict the setup of Section~\ref{sec:general} to the case where $k=2$. Thus $\mathfrak{S}_2^\pm=\Z_2^2\rtimes \langle \tau \rangle=D_4$ is the dihedral group of order 8 (that is, the symmetry group of a square), where the transposition $\tau$ acts by swapping the coordinates of any $g=(g_1,g_2)\in \Z_2^2$. In what follows, for any $n\geq 1$ we let $(S^n)^2=Y(n,2)$ and we denote the non-free part $Y'(n,2)$ by $(S^n)^2_{>1}$.

Let $m=2^s+1$ with $s\geq 2$, let $c=\lceil\frac{3m}{2}\rceil$, and let $n = d+c+1$. In order to prove Theorem~\ref{thm:degree}, it will be necessary to use the full dihedral group instead of just the normal $\mathbb{Z}_2^2$-subgroup.  We will prove that any $D_4$-equivariant map $F\colon (S^n)^2\rightarrow U_2^{\oplus m}\oplus V_2$ has a zero, which demonstrates the existence of the desired Radon pair for any $f\colon \Delta_n\rightarrow \R^d$. As before, we proceed by contradiction. Assuming that $F$ never vanishes, a mapping degree argument will lead to a contradiction with the degree calculation of a certain $D_4$-equivariant map $G\colon (S^{c-1})^2\rightarrow U_2^{\oplus m}$ crucially used in~\cite{BFHZ18} in obtaining case (ii) of Theorem~\ref{thm:hyperplane-exact} (see~\cite[Lemma 5.6]{BFHZ18}). We state the properties of this map that we shall need as Lemma~\ref{lem:degree} below. As $c-1=3\cdot 2^{s-1}+1$, observe that  $\dim((S^{c-1})^2)=3\cdot 2^s+2=\dim(S(U_2^{\oplus m}))$ and so the degree of a mapping between $(S^{c-1})^2$ and $S(U_2^{\oplus m})$ is defined after a choice of orientation. We let $\phi\colon U_2^{\oplus m}\setminus\{0\}\rightarrow S(U_2^{\oplus m})$ be the radial projection, which is $D_4$-equivariant.

\begin{lemma}
\label{lem:degree}
Let $s\geq 2$, let $m=2^s+1$, and let $c=\lceil\frac{3m}{2}\rceil$. Then there exists a non-vanishing continuous $D_4$-equivariant map $G\colon (S^{c-1})^2\rightarrow U_2^{\oplus m}$ such that $\phi \circ G \colon (S^{c-1})^2\rightarrow S(U_2^{\oplus m})$ has non-zero degree modulo 8.  
\end{lemma}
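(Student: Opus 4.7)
I would adapt the construction of [BFHZ18, Lemma 5.6], realizing $G$ as a test map coming from a specific mass configuration in $\R^{c-1}$. First, identify $(S^{c-1})^2$ with the parameter space of ordered pairs of oriented affine hyperplanes in $\R^{c-1}$ via the standard lift: writing $S^{c-1}\subset \R^c = \R^{c-1}\oplus \R$, send $v = (a,b) \in S^{c-1}$ to the open half-space $H^+_v = \{x \in \R^{c-1} : \langle a, x\rangle + b > 0\}$. Given finite Borel measures $\mu_1,\dots,\mu_m$ on $\R^{c-1}$ (to be chosen), define $G\colon (S^{c-1})^2 \to U_2^{\oplus m}$ coordinatewise by
\[
G(v_1,v_2)_j = \sum_{g \in \mathbb{Z}_2^2} \bigl(\mu_j(O^{v_1,v_2}_g) - \tfrac{1}{4}\mu_j(\R^{c-1})\bigr)\, g,
\]
where $O^{v_1,v_2}_g$ is the open orthant with sign pattern $g \in \mathbb{Z}_2^2$ determined by $(H_{v_1}, H_{v_2})$. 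Each block lies in $U_2$ because the four orthant masses sum to $\mu_j(\R^{c-1})$, and $D_4$-equivariance is immediate from the geometric interpretation of the action: $\mathbb{Z}_2^2$ reorients each hyperplane (permuting orthants according to the regular representation), and the transposition swaps the two hyperplanes.

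For the non-vanishing of $G$, choose the $\mu_j$ so that no pair of affine hyperplanes in $\R^{c-1}$ simultaneously equipartitions all of them. Since $c - 1 < \lceil \tfrac{3m}{2}\rceil \le \Delta(m,2)$ by the Avis--Ramos lower bound, such configurations exist; a standard witness places atomic masses along the moment curve.

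The main obstacle is the mod-$8$ degree computation for $\phi \circ G$. Dimensions match, $\dim(S^{c-1})^2 = 2c-2 = 3m - 1 = \dim S(U_2^{\oplus m})$, so the degree is well-defined after an orientation choice. My plan is to perturb the $\mu_j$ slightly so that $\phi \circ G$ is transverse to a regular value $y \in S(U_2^{\oplus m})$, and then count the signed preimages of $y$ organized into $D_4$-orbits. Following the argument in [BFHZ18, Lemma 5.6], I would take the $\mu_j$ as clustered perturbations of atomic measures placed at carefully chosen points of the moment curve; the local Jacobian of $G$ at each resulting near-equipartitioning configuration is then computable as a Vandermonde-type determinant whose sign can be controlled. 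The arithmetic structure $m = 2^s + 1$ is what forces the total signed count of preimages, and hence $\deg(\phi \circ G) \pmod 8$, to be odd. This parity step is the most delicate ingredient and is precisely where the restriction $t = 1$ enters.
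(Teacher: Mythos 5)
Your setup is correct and is in fact how the map in question arises: the paper does not reprove this lemma at all, but imports it wholesale from~\cite[Lemma 5.6]{BFHZ18}, where $G$ is precisely the product-scheme test map of $m$ masses concentrated along a (non-standard) moment curve in $\R^{c-1}$, non-vanishing because $c-1<\lceil\frac{3m}{2}\rceil\le\Delta(m,2)$ by the Avis--Ramos lower bound~\cite{Av84,Ra96}. Your dimension count $2c-2=3m-1$, the $D_4$-equivariance of the orthant-deviation map, and the openness of non-vanishing under small perturbations are all fine.

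The genuine gap is that the entire content of the lemma --- $\deg(\phi\circ G)\not\equiv 0\pmod{8}$ --- is asserted rather than proved. Declaring that the local Jacobians are ``Vandermonde-type determinants whose sign can be controlled'' and that ``the arithmetic structure $m=2^s+1$ forces the signed count to be odd'' names the desired conclusion without supplying an argument. Note also that the count is not an enumeration of equipartitions: a regular value of $\phi\circ G$ imposes $3m-1$ conditions (e.g.\ equipartition of $2^s$ of the masses together with a prescribed ray direction for the deviation vector of the remaining mass), and one must both enumerate these solutions for the specific moment-curve masses and control all of their signs globally; this is exactly the delicate computation done in~\cite{BFHZ18}, and nothing in your sketch substitutes for it. Moreover, there is no a priori parity constraint to lean on: the $D_4$-action on $(S^{c-1})^2$ is not free, so the equivariant Hopf-type theorem only says that the degree mod $8$ is determined by the behavior on the non-free part --- it does not make the degree odd for free, so even the claimed oddness (rather than mere non-vanishing mod $8$) is itself something to be established. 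As it stands, your proof either needs to carry out the computation of~\cite[Lemma 5.6]{BFHZ18} in detail or simply cite that result, which is what the paper does.
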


\subsection{Outline of the Proof of Theorem~\ref{thm:degree}} 

Before providing the details of the proof Theorem~\ref{thm:degree}, we first provide a summary of our argument. Let $n=d+c+1$, where again $c=\lceil\frac{3m}{2}\rceil$ with $m=2^s+1$ and $s\geq 2$. We have $\dim ((S^{n-1})^2)=\dim S(U_2^{\oplus m}\oplus V_2)$. Assuming that $F\colon (S^n)^2\rightarrow U_2^{\oplus m}\oplus V_2$ never vanishes, the restriction $$\overline{F}\colon (S^{n-1})^2\rightarrow U_2^{\oplus m}\oplus V_2$$ of $F$ is likewise non-vanishing (here we view $S^{n-1}$ as the equatorial sphere inside $S^n$), and composing this with the radial projection $\rho\colon (U_2^{\oplus m}\oplus V_2)\setminus\{0\}\rightarrow S(U_2^{\oplus m}\oplus V_2)$ gives the existence of a $D_4$-equivariant map $$\rho\circ \overline{F}\colon (S^{n-1})^2\rightarrow S(U_2^{\oplus m}\oplus V_2).$$ As this map factors through $(S^n)^2$, it follows from elementary (co)homological considerations that $\rho\circ \overline{F}$ has degree zero.  On the other hand, a generalization of the equivariant Hopf degree theorem shows that the degrees of $\rho\circ \overline{F}$ and $\rho \circ \widetilde{G}$ coincide modulo 8, where $\widetilde{G}\colon (S^{n-1})^2\rightarrow U_2^{\oplus m}\oplus V_2$ is a natural $D_4$-equivariant extension of $G$ which is defined in Section~\ref{sec:5.2}. This is because $\rho\circ \overline{F}$ and $\rho\circ \widetilde{G}$ are $D_4$-equivariantly homotopic on the non-free part $(S^{n-1})^2_{>1}$ of $(S^{n-1})^2$, as shown by  Proposition~\ref{prop:shielding}. 

An easy regular value argument then shows that $\deg(\rho \circ \widetilde{G})=\deg(\phi \circ G)$. This implies that $\deg(\phi\circ G) \equiv 0 \pmod{8}$, contradicting Lemma~\ref{lem:degree} and thereby establishing Theorem~\ref{thm:degree}. 

\subsection{Equivariant Homotopy on the Non-Free Part}
\label{sec:5.2} 
Our degree argument relies on comparing the $D_4$-equivariant map $\overline{F}\colon (S^{n-1})^2\rightarrow U_2^{\oplus m}\oplus V_2$ to a $D_4$-equivariant extension $\widetilde{G}\colon (S^{n-1})^2\rightarrow U_2^{\oplus m}\oplus V_2$ of the $D_4$-equivariant map $G\colon (S^{c-1})^ 2\rightarrow U_2^{\oplus m}$ guaranteed by Lemma~\ref{lem:degree}. 
To that end, let $c\geq 1$ be arbitrary and suppose that  $f\colon (S^c)^2\rightarrow U_2^{\oplus m}$ is a $D_4$-equivariant map which is non-vanishing on the non-free part $(S^c)^2_{>1}$ of~$(S^c)^2$. For any $d\geq 1$ and $n=d+c+1$, we extend $f$ to a $D_4$-equivariant map $$\widetilde{f}\colon (S^n)^2\rightarrow U_2^{\oplus m}\oplus V_2$$ whose zeros are the same as those of $G$, and which in particular does not vanish on $(S^n)^2_{>1}$. Namely, view $S^n$ as the join $S^n=S^c\ast S^d$. Then each $y_i$ from any $y=(y_1,y_2)\in (S^n)^2$ is the formal convex sum $y_i=(1-t_i)y_i^c\oplus t_iy_i^d$ where $y_i^c\in S^c$, $y_i^d\in S^d$, and $t_i\in[0,1]$. Defining \begin{equation}\label{eqn:widetilde{f}}\widetilde{f}(y)=(1-t_1)(1-t_2) f(y_1^c,y_2^c)\oplus (t_1y_1^d, t_2y_2^d),\end{equation} we see that $\widetilde{f}$ is well--defined, $D_4$-equivariant, and has the same zero set as~$f$. Note also that the restriction of $\widetilde{f}$ to $(S^n)^2_{>1}$ equivariantly extends the restriction of $f$ to $(S^c)^2_{>1}$. In what follows, we shall occasionally need to ensure smoothness of the maps $f$ and $\widetilde{f}$. To give $S^n$ the usual smooth structure, $S^c$ is connected to $S^d$ via arcs along great circles, so that   $y_i=\left(\cos(\pi t_i/2)y_i^c,\sin(\pi t_i/2)y_i^d\right)$ for any $y_i\in S^n$. Correspondingly, $\widetilde{f}\colon (S^n)^2\rightarrow U_2^{\oplus m}$ given by \begin{equation}\label{eqn:smoothwidetilde{f}}\widetilde{f}(y)=\cos(\pi t_1/2)\cos(\pi t_2/2)f(y_1^c,y_2^c)\oplus \left(\sin(\pi t_1/2)y_1^d,\sin(\pi t_2/2)y_2^d\right)\end{equation} is then smooth whenever $f$ is.

We now return to the case where $c=\lceil\frac{3m}{2}\rceil$ with $m=2^s+1$ and $s\geq 2$. Let $\overline{F}\colon (S^{n-1})^2\rightarrow U_2^{\oplus m}\oplus V_2$ be the map above and let $G\colon (S^{c-1})^2\rightarrow U_2^{\oplus m}$ be as in Lemma~\ref{lem:degree}. Now consider the restrictions $\overline{F}\mid_{(S^{n-1})^2_{>1}}, \widetilde{G}\mid_{(S^{n-1})^2_{>1}}\colon (S^{n-1})^2_{>1}\rightarrow (U_2^{\oplus m}\oplus V_2)\setminus\{0\}$. the following proposition will be crucial for our degree argument.

\begin{proposition} 
\label{prop:shielding} 
Let $m\geq 1$ and suppose that $n=c+d+1$, where $c=\lceil\frac{3m}{2}\rceil$ and $d\geq 1$. If $f_1, f_2 \colon (S^{n-1})^2_{>1} \rightarrow (U_2^{\oplus m}\oplus V_2)\setminus\{0\}$ are $D_4$-equivariant maps, then $f_1$ and $f_2$ are $D_4$-equivariantly homotopic. In particular, the maps $\overline{F}\mid_{(S^{n-1})^2_{>1}}$ and $\widetilde{G}\mid_{(S^{n-1})^2_{>1}}$ above are $D_4$-equivariantly homotopic. 
\end{proposition}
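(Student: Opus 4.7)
The plan is to reduce the $D_4$-equivariant homotopy classification on the non-free part $(S^{n-1})^2_{>1}$ to a single Bredon-cohomological vanishing statement in which the extra $V_2$ summand lifts the connectivity of the relevant $\tau$-fixed subspace above the dimension of the source; this is the ``shielding'' role of $V_2$ referred to in the proposition.

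First I would describe the $D_4$-space $(S^{n-1})^2_{>1}$ explicitly. It is the disjoint union of the diagonal $\Delta=\{(y,y):y\in S^{n-1}\}$ and the antidiagonal $\Delta'=\{(y,-y):y\in S^{n-1}\}$. A direct inspection of the $D_4$-action shows that the stabilizer of each $(y,y)\in\Delta$ is exactly $\langle\tau\rangle$, that the Klein four subgroup $H=\langle\tau,(1,1)\rangle$ is the setwise stabilizer of $\Delta$, and that under the induced $H$-action on $\Delta\cong S^{n-1}$ the transposition $\tau$ acts trivially while $(1,1)$ acts antipodally. Consequently $(S^{n-1})^2_{>1}\cong D_4\times_H S^{n-1}$ as a $D_4$-space with a single orbit type $[\langle\tau\rangle]$. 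By the induction--restriction adjunction, a $D_4$-equivariant map $f\colon (S^{n-1})^2_{>1}\to Y\setminus\{0\}$ (with $Y=U_2^{\oplus m}\oplus V_2$) is the same data as an $H$-equivariant map $\tilde f\colon S^{n-1}\to Y\setminus\{0\}$; triviality of the $\tau$-action on the source forces $\tilde f$ to land in the $\tau$-fixed subspace, so $\tilde f$ is equivalent to a $\mathbb{Z}_2$-equivariant map $S^{n-1}_{\mathrm{antip}}\to Y^\tau\setminus\{0\}$, where $\mathbb{Z}_2=H/\langle\tau\rangle$ is generated by the image of $(1,1)$.

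Next I would compute $Y^\tau$ as a $\mathbb{Z}_2$-representation. Decomposing $U_2=U_{(1,0)}\oplus U_{(0,1)}\oplus U_{(1,1)}$, the transposition $\tau$ swaps the first two summands and fixes the third; hence $U_2^\tau$ is two-dimensional, with one line spanned by $U_{(1,1)}$ (on which $(1,1)$ acts trivially) and one line spanned by the diagonal of $U_{(1,0)}\oplus U_{(0,1)}$ (on which $(1,1)$ acts as negation). Similarly $V_2^\tau$ is the diagonal copy of $\mathbb{R}^{d+1}$ in $(\mathbb{R}^{d+1})^2$, on which $(1,1)$ acts antipodally. Assembling these, $Y^\tau\cong\mathbb{R}^m\oplus\mathbb{R}^{m+d+1}_{-}$ as a $\mathbb{Z}_2$-representation, so $Y^\tau\setminus\{0\}$ is homotopy equivalent to the sphere $S^{2m+d}$ and in particular is $(2m+d-1)$-connected.

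With these reductions, the third step is equivariant obstruction theory. Since every orbit in $(S^{n-1})^2_{>1}$ has stabilizer conjugate to $\langle\tau\rangle$, the obstructions to constructing a $D_4$-equivariant homotopy between any two $D_4$-equivariant maps into $Y\setminus\{0\}$ lie in the Bredon cohomology groups $H^k_{D_4}\!\bigl((S^{n-1})^2_{>1};\,\underline{\pi_k(Y^\tau\setminus\{0\})}\bigr)$. By the induction--restriction identification these equal $H^k(\mathbb{R}P^{n-1};\pi_k(Y^\tau\setminus\{0\}))$ with the local system produced by the $(1,1)$-action, and they automatically vanish outside the range $2m+d\le k\le c+d$: below because $Y^\tau\setminus\{0\}$ is $(2m+d-1)$-connected, and above because $\dim \mathbb{R}P^{n-1}=n-1=c+d$. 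The crucial input is the strict inequality $c=\lceil 3m/2\rceil<2m$, which holds for every $m\ge 2$ and therefore in every application of the proposition to Theorem~\ref{thm:degree} (where $m=2^s+1$ with $s\ge 2$, so $m\ge 5$). This inequality empties the range in which an obstruction could possibly live, so all obstructions vanish, and any two $D_4$-equivariant maps $(S^{n-1})^2_{>1}\to Y\setminus\{0\}$ are $D_4$-equivariantly homotopic. The maps $\overline{F}\!\mid_{(S^{n-1})^2_{>1}}$ and $\widetilde{G}\!\mid_{(S^{n-1})^2_{>1}}$ are two such maps and hence are $D_4$-equivariantly homotopic. The main technical obstacle is the book-keeping in the second step: correctly decomposing $Y^\tau$ as a $\mathbb{Z}_2$-representation so that $Y^\tau\setminus\{0\}$ is identified with a sphere of the expected dimension $2m+d$; once this is in place, the dimension count $\lceil 3m/2\rceil<2m$ is what makes the shielding argument go through.
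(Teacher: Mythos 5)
Your argument is correct and is essentially the paper's proof in a different packaging: the paper likewise reduces to the observation that equivariant maps on the two spheres of the non-free part must land in the $\tau$-fixed (respectively $(1,1)\tau$-twisted) subspaces $U^{\pm}\cong\R^{2m+d+1}$ of $U_2^{\oplus m}\oplus V_2$, and then kills all obstructions by the same connectivity-versus-dimension count, only carrying this out cell-by-cell over a $\Z_2^2$-free CW structure rather than via the identification $(S^{n-1})^2_{>1}\cong D_4\times_H S^{n-1}$ and obstruction theory on $\R P^{n-1}$. Your sharp inequality $c=\lceil\frac{3m}{2}\rceil<2m$ (hence $m\ge 2$) is in fact the correct requirement --- the connectivity of $U^{\pm}\setminus\{0\}\simeq S^{2m+d}$ is $2m+d-1$, one less than stated in the paper's proof --- and since every application of the proposition has $m\ge 2$, this restriction is harmless.
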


\begin{proof}[Proof of Proposition~\ref{prop:shielding}]
The non-free part $(S^{n-1})^2_{>1}$ of the $D_4$-action on $(S^{n-1})^2$ consists of the two disjoint $n$-spheres $S^{n-1}_+:=\{(x,x) \ : \ x \in S^{n-1}\}$ and $S^{n-1}_-:=\{(x,-x) \ : \ x \in S^{n-1}\}$. The $D_4$-action on ~${(S^{n-1})^2_{>1}}$ is free when restricted to $\mathbb{Z}_2^2$. Moreover, the standard free $\mathbb{Z}_2$-equivariant cellular structure on $S^{n-1}$ -- i.e., with two $i$-dimensional open hemisphere cells $e_+^i$ and $e_-^i$ for each $0\leq i <n$ -- gives rise to a $D_4$-equivariant CW structure on $(S^{n-1})^2_{>1}$ such that the  $\mathbb{Z}_2^2$ action is free. Indeed, for each $0\leq i<n$, one may define the $i$-dimensional cells of $(S^{n-1})^2_{>1}$ by $(e^i_\pm)_+:=\{(x,x)\mid x\in e^i_\pm\}\subset S_+^{n-1}$ and let
$(e^i_\pm)_-:=\{(x,-x)\mid x\in e^i_\pm\}\subset S_-^{n-1}$. If $\tau$ denotes the generator of $\mathfrak{S}_2$, then $\tau$ acts trivially on $S_{n-1}^+$. On the other hand, $\tau$ acts antipodally on $S_{n-1}^-$, so that $\tau\cdot (x,-x)=g\cdot (x,-x)$ where $g=(1,1)\in \mathbb{Z}_2^2$. Moreover, the CW structure on $(S^{n-1})^2_{>1}$ respects these actions. 

Let $U_2^+$ be the subset of $U_2$ given by $U_2^+=\left\{\sum_{g\in G} r_g g \in U_2 \mid r_{(0,1)}=r_{(1,0)}\right\}$, and likewise let  $U_2^-=\left\{\sum_{g\in G} r_g g\in U_2\mid r_{(0,0)}=r_{(1,1)}\right\}$.  Also let $V_2^+=\{(v_1,v_2)\in (\R^{d+1})^2\mid v_1=v_2\}$  and $V_2^-=\{(v_1,v_2)\in (\R^{d+1})^2\mid v_1=-v_2\}$. Now define $U^+=(U_2^+)^{\oplus m}\oplus V_2^+$ and $U^-=(U_2^-)^{\oplus m} \oplus V_2^-$. Thus $U^+$ is the subset of $U_2^{\oplus m}\oplus V_2$ on which the transposition $\tau$ acts trivially, while and $U^-$ is the subset of $U_2^{\oplus m}\oplus V_2$ on which $\tau$ acts as $g=(1,1)\in \Z_2^2$. Letting $\tau\cdot h$ denote the permutative action of $\tau$ on any $h\in \mathbb{Z}_2^2$, one has $\tau h \tau=(\tau\cdot h)\tau$ for each $h\in \Z_2^2$, where $\tau\cdot h=h$ if $h\in \langle g \rangle$ and $\tau\cdot h =g+h$ otherwise. It is then easily verified that the subset $U:=U^+\cup U^-$ of $U_2^{\oplus m}\oplus V_2$ is $D_4$-invariant, and moreover that $h\cdot U^\pm=U^\pm$ if $h\in \langle g\rangle$ and $h\cdot U^\pm =U^\mp$ otherwise.

Now consider the given $D_4$-equivariant maps  $f_1,f_2\colon {(S^{n-1})^2_{>1}}\rightarrow U_2^{\oplus m}\oplus V_2$. Each of these necessarily maps $S_+^{n-1}$ to $U^+$ and $S_-^{n-1}$ to $U^-$. To obtain the claimed homotopy, extend the $D_4$-action on ${(S^{n-1})^2_{>1}}$ to ${(S^{n-1})^2_{>1}} \times [0,1]$ by letting $D_4$ act trivially on~$[0,1]$. Now let $X= (S^{n-1})^2_{>1} \times \{0,1\}$ and let $h \colon X \to U \setminus\{0\}$ be given by $h=f_1|_{(S^{n-1})^2_{>1}}$ on $(S^{n-1})^2_{>1} \times \{0\}$ and $h=f_2|_{(S^{n-1})^2_{>1}}$ on $(S^{n-1})^2_{>1} \times \{1\}$. The product CW structure on ${(S^{n-1})^2_{>1}} \times [0,1]$ is $D_4$-equivariant, and the action is free when restricted to $\Z_2^2$. It follows from elementary equivariant obstruction theory that there is a $\mathbb{Z}_2^2$-equivariant extension $H\colon {(S^{n-1})^2_{>1}} \times [0,1] \to U\setminus \{0\}$ of $f$ so that $H(S_+^{n-1}\times I)\subset U^+$ and $H(S_-^{n-1}\times I)\subset U^-$. To see this, observe that the boundary of any cell $(e_\pm^i)_\pm \times (0,1)$ has dimension at most $n-1=d+c$, while the connectivity of $U^\pm\setminus \{0\}$ is $2m+d$. As $c=\lceil\frac{3m}{2}\rceil\leq 2m$, we have $n-1\leq 2m+d$. Thus one may define $H$ by induction on the dimension of cells of $(S^{n-1})^2_{>1}\times I$, $\Z_2^2$-equivariantly extending cell-by-cell at any stage: if $H$ is so constructed on the $i$-skeleton, $0\leq i < n-1$, then the composition of $H$ with the attaching map of $(e_+^{i+1})_+\times I$ gives a map from an $i$-sphere to $U^+$ which is therefore nullhomotopic and so extends to this cell. One then defines $H$ on the remaining $(i+1)$-dimensional cells $g\cdot ((e_+^{i+1})_+\times I)$ for $g\in \Z_2^2\setminus\{0\}$ to preserve equivariance, and this guarantees that $H((e_\pm^{i+1})_\pm)\subset U^\pm)$.

The map $H\colon {(S^{n-1})^2_{>1}} \times [0,1]\rightarrow U\setminus\{0\}$ thus given by obstruction theory is a $\Z_2^2$-equivariant homotopy between $f_1$ and $f_2$. However, the action of $\tau$ on $(S_n)^+\times I$ and on $U^+$ is trivial while the action of $\tau$ on $(S_n)^-\times I$ and $U^-$ is given by $g=(1,1)$.  As $H$ sends $S^n_+\times I$ to $U^+$ and $S^n_-\times I$ to~$U^-$, the homotopy is necessarily $D_4$-equivariant. 
\end{proof} 

\begin{remark}
\label{rem:shielding} In Section~\ref{sec:transversal}, it will be important to note that, provided $m\geq 2$, Proposition~\ref{prop:shielding} holds for $n=c+d+1$ and $c=\lceil\frac{3m}{2}\rceil$ when the domain sphere $S^{n-1}$ is replaced by $S^n$. This is because we still have no obstruction in equivariantly extending the map $h\colon (S^n)^2_{>1} \times \{0,1\}\rightarrow U\setminus\{0\}$ to all of $(S^n)^2_{>1}\times I$. Indeed, the dimension of any cell $(e_\pm^i)_\pm \times (0,1)$ is now $n=d+c+1$, and this is still no greater than the connectivity of the codomain when $m\geq 2$. The same dimension count shows that one may also let $d=-1$ in this case (as well as in Proposition~\ref{prop:shielding}), or in other words that these results holds when the module $V_2$ is omitted. This observation will also be needed in Section~\ref{sec:transversal}. 
\end{remark}

\subsection{Degree Calculations}

We now perform the degree calculations outlined at the beginning of this section. As before, let $\overline{F}\colon (S^{n-1})^2\rightarrow U_2^{\oplus m}\oplus V_2$ be the restriction of our map $F\colon (S^n)^2\rightarrow U_2^{\oplus m}\oplus V_2$, which we assume to be non-vanishing, and let $G\colon (S^{c-1})^2\rightarrow U_2^{\oplus m}$ be as in Lemma~\ref{lem:degree}. We consider the maps  $\rho\circ \overline{F},\rho\circ\widetilde{G}\colon (S^{n-1})^2\rightarrow S(U_2^{\oplus m}\oplus V_2),$ where again $\rho\colon (U_2^{\oplus m}\oplus V_2)\setminus\{0\}\rightarrow S(U_2^{\oplus m}\oplus V_2)$ denotes the radial projection. 

First, we show that \begin{equation}\label{eqn:mod} \deg(\rho \circ \overline{F})\equiv\deg(\rho \circ \widetilde{G})\pmod{8}.\end{equation} 

\begin{proof}[Proof of equation~\ref{eqn:mod}] By Proposition~\ref{prop:shielding}, we have that  $\overline{F}_{(S^{n-1})^2_{>1}}$ and $\widetilde{G}_{(S^{n-1})^2_{>1}}$ are $D_4$-equivariantly homotopic, and therefore $(\rho\circ \overline{F})_{(S^{n-1})^2_{>1}}$ and $(\rho\circ \widetilde{G})_{(S^{n-1})^2_{>1}}$ are $D_4$-equivariantly homotopic. Thus equation~(\ref{eqn:mod}) is an immediate consequence of the following generalization of the equivariant Hopf theorem~\cite[Corollary 2.4]{KuBa96}.\end{proof}

\begin{theorem}
\label{thm:hopf}
Let $n\geq 1$ and let $M^n$ be a compact oriented $n$-dimensional manifold with the action of a finite group~$\Gamma$. Let $N\subseteq M$ be a closed $\Gamma$-invariant subset which contains the non-free part of~$M$. Then for any two $\Gamma$-equivariant maps $f_1,f_2\colon M^n \rightarrow S^n$ that are equivariantly homotopic on~$N$, we have that $\deg(f_1) \equiv \deg(f_2) \pmod{|\Gamma|}.$ \end{theorem}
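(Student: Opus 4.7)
The plan is to compute $\deg(f_1) - \deg(f_2)$ as a cellular difference and then organize the contributing cells into $\Gamma$-orbits, exploiting that $\Gamma$ acts freely on the complement of $N$. First I would use the equivariant homotopy of $f_1|_N$ and $f_2|_N$ to replace $f_2$ by an equivariantly homotopic map $\widetilde{f}_2$ satisfying $\widetilde{f}_2|_N = f_1|_N$ exactly. Concretely, I would equip $M$ with a $\Gamma$-equivariant CW structure in which $N$ is a subcomplex, take a $\Gamma$-invariant collar neighborhood of $N$, and propagate the given homotopy outward through the collar by equivariant obstruction theory (no obstructions arise since the collar equivariantly deformation retracts to $N$). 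A subsequent equivariant cellular approximation, performed rel $N$, makes $\widetilde{f}_2$ agree with $f_1$ on the entire $(n-1)$-skeleton of $M$ without changing the degree.

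Next I would compute the degree difference cell by cell. For each $n$-cell $e$ of $M$, the maps $f_1|_e$ and $\widetilde{f}_2|_e$ coincide on $\partial e$ and so glue to a map $S^n \to S^n$ of some integer degree $d(e)$. The standard cellular-degree formula, evaluated against the fundamental class $[M]$, then yields $\deg(f_1) - \deg(f_2) = \sum_e d(e)$ summed over all $n$-cells of $M$. Cells contained in $N$ contribute $d(e) = 0$, so only $n$-cells in $M \setminus N$ matter.

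These cells lie in the free locus of the $\Gamma$-action and are partitioned into $\Gamma$-orbits of size exactly $|\Gamma|$. Denoting by $\epsilon_M, \epsilon_{S^n} \colon \Gamma \to \{\pm 1\}$ the orientation characters of the actions on $M$ and $S^n$, the equivariance relation $f_i(\gamma x) = \gamma f_i(x)$, together with a consistent choice of cellular orientations, forces $d(\gamma \cdot e) = \epsilon_M(\gamma)\, \epsilon_{S^n}(\gamma)\, d(e)$. Summing over the orbit of $e$ gives $d(e) \sum_{\gamma \in \Gamma} \epsilon_M(\gamma)\epsilon_{S^n}(\gamma)$, and because $\epsilon_M \cdot \epsilon_{S^n}$ is a homomorphism $\Gamma \to \{\pm 1\}$ this sum is either $|\Gamma|$ (when the product character is trivial) or $0$ (when it is nontrivial, its kernel having index $2$). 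Either way, each orbit contributes a multiple of $|\Gamma|$ to $\deg(f_1) - \deg(f_2)$, which is exactly what is needed.

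The main obstacle I anticipate is the first step: performing the equivariant modification near $N$ rigorously when $N$ is only assumed to be a closed invariant subset and not a submanifold. I would handle this by invoking an equivariant triangulation (or CW structure) of $M$ adapted to $N$ and then applying equivariant obstruction theory inside the collar. The orientation bookkeeping in the final orbit sum also takes some care, but the trivial-versus-nontrivial character dichotomy disposes of both possibilities uniformly.
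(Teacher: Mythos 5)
The paper does not actually prove this statement; it is quoted from Kushkuley--Balanov \cite{KuBa96} (their Corollary 2.4), so there is no in-paper argument to compare with. Your proposal is the standard ``difference cochain'' proof of such equivariant Hopf-type comparison theorems, and its core is sound: after arranging $f_1=\widetilde f_2$ on $N\cup M^{(n-1)}$, the degree difference is the evaluation of the equivariant difference cochain on the fundamental cycle, the $n$-cells meeting $N$ contribute $0$, the remaining cells are freely permuted in orbits of size exactly $|\Gamma|$ (in a $\Gamma$-CW structure a cell mapped to itself is fixed pointwise, and setwise stabilizers are trivial off the non-free part), and your orientation-character bookkeeping $d(\gamma e)=\epsilon_M(\gamma)\epsilon_{S^n}(\gamma)d(e)$ is correct, with each orbit contributing either $|\Gamma|\,d(e)$ or $0$ --- in both cases a multiple of $|\Gamma|$. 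This is in the same spirit as the comparison principle proved in \cite{KuBa96}, which works with fundamental domains of the free part rather than an explicit cellular difference cochain.

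Two steps need repair, though neither threatens the overall strategy. First, equivariant cellular approximation does not make $\widetilde f_2$ agree with $f_1$ on $M^{(n-1)}$: with a genuinely $\Gamma$-equivariant CW structure on $S^n$ the $(n-1)$-skeleton of the target is not a point, so cellularity of both maps forces no agreement. The correct tool is the one you used a sentence earlier: equivariant obstruction theory rel $N$, inducting over skeleta and extending the homotopy over one cell per free orbit, where the obstructions lie in $\pi_k(S^n)=0$ for $k<n$; freeness of the action off $N$ is exactly what makes this cell-by-orbit extension possible. Second, your construction presupposes a $\Gamma$-CW (or equivariantly triangulated) pair $(M,N)$, which an arbitrary closed invariant $N$ need not admit; you should first replace $N$ by the non-free part (the given homotopy restricts to it, and the conclusion only improves), and then invoke equivariant triangulation of smooth actions to make it a subcomplex, after which the pair satisfies the equivariant homotopy extension property and your first step goes through without any collar. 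This restricts your proof to smooth (or locally smoothable) actions --- which covers the paper's application to $(S^{n-1})^2$ with its $D_4$-action --- whereas the statement as quoted, for arbitrary topological actions and arbitrary closed $N$, is what the citation to \cite{KuBa96} is supplying.
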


Next, we show that \begin{equation}\label{eqn:extension} \deg(\rho\circ \widetilde{G})=\deg(\phi\circ G), \end{equation} where $\phi\colon U_2^{\oplus m}\setminus\{0\}\rightarrow S(U_2^{\oplus m})$ is the radial projection.  

\begin{proof}[Proof of equation~\ref{eqn:extension}] By the Whitney approximation theorem, $G\colon (S^{c-1})^2\rightarrow U_2^{\oplus m}$ is homotopic to a smooth map, and therefore so are $\phi\circ G$ and $\rho\circ \widetilde{G}$. It therefore suffices to assume that $G$ is smooth. Moreover, $\deg(\rho\circ \widetilde{G})=\deg(\rho \circ (\widetilde{\phi\circ G}))$, and therefore it suffices to assume that the image of $G$ lies in $S(U_2^{\oplus m})$. Indeed, letting $G_s(y)=(1-s)G(y_c)+(1-s)(\phi\circ G)(y_c)$ for each $y_c\in (S^{c-1})^2$ and $s\in [0,1]$, then $\rho\circ \widetilde{G_s}$ gives an explicit smooth homotopy between $\rho\circ \widetilde{G}$ and $\rho\circ (\widetilde{\phi\circ G})$. 

By the explicit formula for $\widetilde{G}$, we have $$\widetilde{G}(y_1,y_2)=\cos(\pi t_1/2)\cos(\pi t_2/2)G(y_1^c,y_2^c)\oplus \left(\sin(\pi t_1/2)y_1^d,\sin(\pi t_2/2)y_2^d\right),$$ where  $(y_i^c,y_i^d)\in S^{c-1}\times S^d$, $t_i\in [0,1]$, and $y_i=\left(\cos( \pi t_i/2)y_i^c, \sin(\pi t_i/2)y_i^d\right)$ for $i\in\{1,2\}$.

Let $u\in S(U_2^{\oplus m})$ be a regular value for $G \colon (S^{c-1})^2\rightarrow S(U_2^{\oplus m})$. Thus $G^{-1}(u)$ is finite, $G$ is a local diffeomorphsim at each $y_c\in G^{-1}(u)$, and $\deg(G)=\sum_{y_c\in G^{-1}(u)}\deg_{G}(y_c)$ where $\deg_{G}(y_c)=\pm 1$ is the local degree (i.e., the sign of the Jacobian determinant) at $y_c$. 

Consider $u\oplus 0\in S(U_2^{\oplus m}\oplus V_2)$. The formula for $\widetilde{G}$ shows that $(\rho\circ \widetilde{G})^{-1}(u\oplus 0)=G^{-1}(u)\oplus 0$. As $\widetilde{G}$ equals $G$ on $(S^{c-1})^2$ and is the identity map on $(S^d)^2$, it is again clear from the formula for $\widetilde{G}$ that $\rho\circ \widetilde{G}$ is a local diffeomorphism at each $y=(y_c,0)$ and that the Jacobian determinants of $\rho\circ \widetilde{G}$ at $y$ and $G$ at $y_c$ are the same. Thus $u\oplus 0$ is a regular value for $\rho \circ \widetilde{G}$ and $\deg(\rho\circ\widetilde{G})=\sum_{y\in (\rho\circ \widetilde{G})^{-1}(u\oplus 0)}\deg_{\rho\circ \widetilde{G}}(y)=\sum_{y_c\in G^{-1}(u)}\deg_G(y_c)=\deg(G)=\deg(\phi\circ G)$. \end{proof}

As our last computation, we show that \begin{equation}\label{eqn:F} \deg(\rho\circ \overline{F})=0.\end{equation} 

\begin{proof}[Proof of equation~\ref{eqn:F}]
We have that $\rho\circ \overline{F}$ factors as $$(S^{n-1})^2\hookrightarrow (S^n)^2\stackrel{\rho\circ F}{\rightarrow} S(U_2^{\oplus m}\oplus V_2).$$ The induced map  $(\rho\circ \overline{F})_{\ast}\colon H_{2n-2}((S^{n-1})^2;\Z)\rightarrow H_{2n-2}(S(U_2^{\oplus m}\oplus V_2);\Z)$ on homology must be the zero map because $H_{2n-2}((S^n)^2;\Z)=0$ by the K\"unneth formula. Thus $\deg(\rho\circ\overline{F})=0$.
\end{proof}

Putting equation~\ref{eqn:F} together with equations~\ref{eqn:mod} and ~\ref{eqn:extension} completes the proof of Theorem~\ref{thm:degree}:

\begin{proof}[Proof of Theorem~\ref{thm:degree}] If the map $F\colon (S^n)^2\rightarrow U_2^{\oplus m}\oplus V_2$ never vanishes, it follows from equations~(\ref{eqn:mod})--(\ref{eqn:F}) that $\deg(\phi\circ G)\equiv 0 \pmod{8}$. This contradicts Lemma~\ref{lem:degree}, so any $F\colon (S^n)^2\rightarrow U_2^{\oplus m}\oplus V_2$ must have a zero and therefore the claimed Radon pair exists.\end{proof}   

\section{Proof of the $t=0$ case of Theorem~\ref{thm:two-radon}}
\label{sec:transversal}

We now prove the $t=0$ case of Theorem~\ref{thm:two-radon}. As before, we state this as a separate theorem:

\begin{theorem}
\label{thm:transversal-bounds}
Let $m=2^s$ for some $s \ge 1$ and $d\ge 1$. Suppose that $n\ge d+ 3\cdot 2^{t-1} +1$. If $\F$ be a family of subsets of $[n+1]$ with $\chi(\KG^{4}(\F)) \le m$, then for any continuous map $f\colon\Delta_n\rightarrow \R^d$, there are two Radon pairs $(\sigma_1^+, \sigma_1^{-}), (\sigma_2^+, \sigma^{-}_2)$ for~$f$ such that each intersection of the form $\sigma_1^\pm \cap \sigma_2^\pm$ does not contain any set from~$\F$. 
\end{theorem}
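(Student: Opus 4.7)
My plan is to mirror the proof of Theorem~\ref{thm:degree} in Section~\ref{sec:degree}, adapting each step to the $t=0$ case. By Theorem~\ref{thm:implications} together with the configuration space/test map framework of Section~\ref{sec:general}, the result reduces to showing that for $m=2^s$, $c=3\cdot 2^{s-1}$, and $n = d+c+1$, every $D_4$-equivariant continuous map $F\colon (S^n)^2 \to U_2^{\oplus m}\oplus V_2$ must have a zero. I would argue by contradiction, assuming $F$ to be nowhere zero throughout.

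The key new input I require is an analog of Lemma~\ref{lem:degree} for $m=2^s$: a $D_4$-equivariant map $G$ from an appropriate product of spheres into $U_2^{\oplus m}$ that is non-vanishing on the non-free part and whose associated topological invariant is non-zero modulo~$8$. Such a $G$ is essentially the $m=2^s$ analog of the auxiliary construction underlying part (iii) of Theorem~\ref{thm:hyperplane-exact}, and can be obtained following~\cite{BFHZ18}. Given $G$, I would extend it via the join construction used in Section~\ref{sec:degree} to $\widetilde{G}\colon (S^n)^2 \to U_2^{\oplus m}\oplus V_2$ whose zero set equals that of $G$, so in particular $\widetilde{G}$ is non-vanishing on the non-free part $(S^n)^2_{>1}$.

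Next I would invoke the form of Proposition~\ref{prop:shielding} adapted in Remark~\ref{rem:shielding} to the domain $(S^n)^2$ (valid since $m \geq 2$), which provides a $D_4$-equivariant homotopy on $(S^n)^2_{>1}$ between $F$ and $\widetilde{G}$. Composing with the appropriate radial projection, I would then establish analogs of the three degree identities from Section~\ref{sec:degree}: the relevant invariant of $\rho\circ F$ vanishes by a homological factorization argument in the spirit of equation~\eqref{eqn:F}; the invariant of $\rho\circ\widetilde{G}$ captures the non-trivial invariant of $G$ as in equation~\eqref{eqn:extension}; and the two invariants agree modulo~$8$ by the equivariant Hopf theorem (Theorem~\ref{thm:hopf}) applied as in equation~\eqref{eqn:mod}. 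Taken together, these three facts contradict the non-triviality supplied by the $m=2^s$ analog of Lemma~\ref{lem:degree}, completing the proof.

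The hard part will be handling the dimension mismatch that distinguishes the $t=0$ case. Unlike in Section~\ref{sec:degree}, where $\dim (S^{n-1})^2 = \dim S(U_2^{\oplus m}\oplus V_2)$, here one has $\dim (S^n)^2 = 2d+3m+2$ while $\dim S(U_2^{\oplus m}\oplus V_2) = 2d+3m+1$, so the classical Hopf degree of $\rho\circ F$ is not directly defined on $(S^n)^2$, and the same obstruction persists after restriction to $(S^{n-1})^2$. My plan to address this is to augment the codomain by a suitably chosen one-dimensional $D_4$-representation to realign dimensions for Theorem~\ref{thm:hopf}, exploiting the $V_2$-omitted form of Proposition~\ref{prop:shielding} from Remark~\ref{rem:shielding} to make the comparison within the $U_2^{\oplus m}$ factor alone. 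Setting up this packaging carefully—together with pinning down the auxiliary Lemma~\ref{lem:degree} analog for $m=2^s$ and its precise mod-$8$ invariant—constitutes the bulk of the technical work.
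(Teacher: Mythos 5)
Your reduction to showing that every $D_4$-equivariant map $F\colon (S^n)^2\to U_2^{\oplus m}\oplus V_2$ has a zero is correct, and you correctly flag the dimension mismatch as the crux. But the proposal has a genuine gap at its foundation: the ``$m=2^s$ analog of Lemma~\ref{lem:degree}'' that you plan to cite does not exist off the shelf, and it is not a routine consequence of~\cite{BFHZ18}. For $m=2^s+1$ the sharp input really is a mapping degree mod $8$ of a map between equal-dimensional spaces (this is~\cite[Lemma 5.6]{BFHZ18}, quoted as Lemma~\ref{lem:degree}), which is why the equivariant Hopf argument of Section~\ref{sec:degree} works there. For $m=2^s$ the sharp input comes from~\cite{BFHZ16} and is of a different nature: it is a \emph{relative equivariant obstruction-theoretic} statement on the join $(S^c)^{\ast 2}$, equipped with the Fox--Neuwirth/Bj\"orner--Ziegler $D_4$-CW structure of Theorem~\ref{thm:CW}, asserting that a specific map $J_0(G_0)$ is non-vanishing on the codimension-one skeleton and has an \emph{odd number of zeros in a generating top cell}. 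There is no equal-dimensional sphere pairing anywhere in this case ($\dim(S^{c-1})^2=3m-2$, $\dim(S^c)^2=3m$, while $\dim S(U_2^{\oplus m})=3m-1$; after adjoining $W_2$ the natural domain becomes the join, of dimension $3m+1$ versus sphere dimension $3m$), so the ``non-zero degree mod $8$'' invariant you want to compare via Theorem~\ref{thm:hopf} is not defined in the place where the known computation lives, and producing it on the product with an added one-dimensional representation would amount to a new computation of essentially the same difficulty as~\cite{BFHZ16} --- not a citation.

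The paper's actual proof therefore abandons the degree comparison and uses a transversality/cobordism parity argument instead: it converts to the join scheme, upgrades $G_0$ via equivariant Whitney approximation and equivariant Thom transversality to a smooth $G$ with $J(G)$ transverse to zero and non-vanishing on the $2c$-skeleton, and shows via the obstruction-cocycle computation that $|J(G)^{-1}(0)|$ is an odd multiple of $8$ (Propositions~\ref{prop:Borsuk-Ulam} and~\ref{prop:regular}). Then, assuming $F$ never vanishes, Propositions~\ref{prop:shielding2} and~\ref{prop:transverse homotopy} give a smooth $D_4$-equivariant homotopy $H$ on $(S^n)^{\ast 2}\times I$, transverse to zero, agreeing with $J(\widetilde G)$ at one end, non-vanishing at the other end and on the non-free part; $H^{-1}(0)$ is then a compact $1$-manifold contained in the free part whose segment components are freely permuted by $D_4$, forcing $J(\widetilde G)^{-1}(0)$ to consist of an \emph{even} number of full $D_4$-orbits --- contradicting the odd count. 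So the distinguishing invariant here is a parity of free-orbit counts of isolated zeros (domain and target vector space of equal dimension), not a Hopf degree modulo $|D_4|$. To repair your plan you would either have to prove the missing product-scheme degree statement for $m=2^s$ (which is exactly what is not known) or switch to the join scheme and the zero-counting argument, i.e., to the paper's route.
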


Let $n = d+c+1$ where $c=3\cdot 2^{s-1}$ with $s\geq 1$. As in Section~\ref{sec:degree}, we shall use contradiction to prove that any $D_4$-equivariant map $F\colon (S^n)^2\rightarrow U_2^{\oplus m}\oplus V_2$ must have a zero, now using a smooth homotopy argument to extend a non-equivariant result  of~\cite{BFHZ16}. 

\subsection{Conversion to the Join Scheme} 
\label{sec:join} 
In order to use computations of ~\cite{BFHZ16}, we first convert from the product scheme used so far to the ``join scheme.'' For any $n\geq 1$, the join $$(S^n)^{\ast 2}=\{z=(1-\mu)y_1\oplus \mu y_2\mid y_1, y_2\in S^n\,\, \text{and}\,\, 0\leq \mu \leq 1\}$$ of $S^n$ consists of all formal convex sums of any two elements from $S^n$. Topologically, $(S^n)^{\ast 2}$ is a sphere of dimension $2n+1$, and the product $(S^n)^2$ is naturally identified with the subset of $(S^n)^{\ast 2}$ consisting of all formal sums for which $\mu=1/2$. As for the product, there is a canonical $D_4$-action on~$(S^n)^{\ast 2}$, with the non-free part~$((S^n)^{\ast 2})_{>1}$ consisting of those $z$ such that either 
(i) $\mu=0$ or $\mu=1$, or else (ii) $\mu=1/2$ and simultaneously $y_1=\pm y_2$. 

Let $W_2$ be the subset of $\R^2$ given by $W_2=\{(r,-r) \ : \ r\in \R\}$, which we view as a $D_4$-module by letting $\mathbb{Z}_2^2$ act trivially and letting $\mathfrak{S}_2$ act by swapping coordinates. For arbitrary $n,d\geq 1$ and any continuous map $f\colon (S^n)^2\rightarrow \R^d$, one may consider the ``join extension'' $$J_0(f)\colon (S^n)^{\ast 2} \rightarrow \R^d \oplus W_2$$ given by \begin{equation}\label{eqn:J_0(f)} J_0(f)(z)= \mu(1-\mu)f(y)\oplus (\mu-1/2,1/2-\mu).\end{equation} 
 Note that the restriction of $J_0(f)$ to  $(S^n)^2$ is precisely~$\frac14 f$. Observe that $J_0(f)(z)=0$ if and only if $\mu=1/2$ and $f(y)=0$, and in particular $J_0(f)$ does not vanish on the non-free part $(S^n)^{\ast 2}_{>1}$ provided $f\colon (S^n)^2\rightarrow \R^d$ does not vanish on $(S^n)^2_{>1}$. Moreover, if $\R^d$ is realized as a $D_4$-module and $f\colon (S^n)^2\rightarrow \R^d$ is $D_4$-equivariant, it is again immediate that $J_0(f)$ is also $D_4$-equivariant.
 
 In what follows smoothness will be important, so let us note here that $(S^n)^{\ast 2}=S^{2n+1}$ has the standard smooth structure if the join is realized by connecting the two copies of $S^n$ via great circles. Explicitly, each $z\in (S^n)^{\ast 2}$ is of the form $z=(\cos(\pi \mu/2) y_1, \sin(\pi \mu/2)y_2)$ for $(y_1,y_2)\in S^n$ and $\mu\in [0,1]$. The non-free part is the same as before. As in Section~\ref{sec:degree}, we view each $S^n=S^c\ast S^d$ as the join, and now realize this join explicitly via arcs on great circles connecting the two sphere factors. Corresponding to this realization of $(S^n)^{\ast 2}$, we will modify the definition of the join map and set \begin{equation}\label{eqn:alt} J(f)(z)= \cos(\pi\mu/2)\sin(\pi\mu/2)f(y)\oplus \left(\cos^2(\pi\mu/2)-1/2,\sin^2(\pi\mu/2)-1/2\right)\end{equation}  for each $z=(\cos(\pi \mu/2) y_1, \sin(\pi \mu/2)y_2)$. As before, $J(f)$ is $D_4$-equivariant if $f$ is.  Again $J(f)(z)=0$ if and only if $\mu=1/2$ and $f(y)=0$ and so the zero sets of $J_0(f)$ and $J(f)$ coincide. In the case that $f\colon (S^n)^2\rightarrow \R^d$ is smooth, equation~\ref{eqn:alt} ensures that $J(f)\colon (S^n)^{\ast 2}\rightarrow \R^d\oplus W_2$ is smooth. Moreover, it follows easily from this formula that $0$ is a regular for $J(f)$ precisely when $0$ is a regular value for $f$. This fact will be needed crucially in what follows.

As in Section~\ref{sec:degree}, we will compare the join $J(F)$ of any $D_4$-equivariant map $F\colon (S^n)^2\rightarrow U_2^{\oplus m}\oplus V_2$ 
to the join $J(\widetilde{G})$ of an extension $\widetilde{G} \colon (S^n)^2\rightarrow U_2^{\oplus m}\oplus V_2$ of a certain $D_4$-equivariant map $G\colon (S^c)^2\rightarrow U_2^{\oplus m}$  which we describe in Section~\ref{sec:previous}. By Proposition~\ref{prop:shielding}, there is a non-vanishing $D_4$-equivariant homotopy between the restriction of $F$ and $\widetilde{G}$ to the non-free part $(S^n)^2_{>1}$. The following guarantees an equivariant homotopy between $J(F)$ and $J(\widetilde{G})$ which is non-vanishing on the non-free part.

\begin{proposition} 
\label{prop:shielding2} 
Let $m\geq 2$ and suppose that $n=c+d+1$, where $c=\lceil\frac{3m}{2}\rceil$ and $d\geq 1$.
\begin{compactenum}[(a)]
\item If $f_1, f_2 \colon (S^n)^2_{>1} \rightarrow (U_2^{\oplus m}\oplus V_2)\setminus\{0\}$ are $D_4$-equivariant maps, then $J(f_1), J(f_2) \colon (S^n)^{\ast 2}_{>1}\rightarrow (U_2^{\oplus m}\oplus W_2\oplus  V_2)\setminus\{0\}$ are $D_4$-equivariantly homotopic. 
\item If $f_1,f_2\colon (S^n)^2\rightarrow U_2^{\oplus m}\oplus V_2$ are $D_4$-equivariant maps which are non-vanishing on $(S^n)^2_{>1}$, then there exists a $D_4$-equivariant homotopy $H\colon (S^n)^{\ast 2}\times I \rightarrow U_2^{\oplus m}\oplus V_2$ from $J(f_1)$ and $J(f_2)$ which is non-vanishing on $(S^n)^{\ast 2}_{>1}\times I$. 
\end{compactenum}
\end{proposition}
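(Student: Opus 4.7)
The plan is to establish part (a) directly by splitting $(S^n)^{\ast 2}_{>1}$ into its natural $D_4$-invariant pieces, and then deduce part (b) from part (a) by an equivariant extension argument across the free part of $X = (S^n)^{\ast 2}\times I$.

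First, I would analyze the non-free part of $(S^n)^{\ast 2}$. Writing an element of the join as $z = (\cos(\pi\mu/2)y_1, \sin(\pi\mu/2)y_2)$ with $y_i \in S^n$ and $\mu \in [0,1]$, the set $(S^n)^{\ast 2}_{>1}$ is the pairwise disjoint $D_4$-invariant union of the two ``end'' spheres $S_0 := \{\mu = 0\}$ and $S_1 := \{\mu = 1\}$, together with the slice $M := \{\mu = 1/2,\ y_1 = \pm y_2\}$, which is naturally identified with $(S^n)^2_{>1}$. The generator of $\mathfrak{S}_2 \subset D_4$ interchanges $S_0$ with $S_1$ and preserves $M$, while $\mathbb{Z}_2^2$ preserves each piece. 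Direct inspection of $J(f)$ using equation~\eqref{eqn:alt} shows that on $S_0 \cup S_1$ the coefficient $\cos(\pi\mu/2)\sin(\pi\mu/2)$ vanishes, so $J(f)$ is independent of $f$ there, taking the constant nonzero values $(0, 1/2, -1/2)$ and $(0, -1/2, 1/2)$ in $U_2^{\oplus m} \oplus W_2 \oplus V_2$, whereas on $M$ it restricts to $\tfrac{1}{4} f(y_1, y_2) \oplus 0$.

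For part (a), I take the stationary homotopy on $S_0 \cup S_1$, which is $D_4$-equivariant and non-vanishing because $J(f_1) \equiv J(f_2)$ there. On $M$, Remark~\ref{rem:shielding} promotes Proposition~\ref{prop:shielding} from $(S^{n-1})^2_{>1}$ to $(S^n)^2_{>1}$ under the hypothesis $m \geq 2$, yielding a non-vanishing $D_4$-equivariant homotopy $H' \colon (S^n)^2_{>1} \times I \to (U_2^{\oplus m} \oplus V_2) \setminus \{0\}$ between $f_1|_{(S^n)^2_{>1}}$ and $f_2|_{(S^n)^2_{>1}}$. Via the identification $M \cong (S^n)^2_{>1}$, the formula $(z, s) \mapsto \tfrac{1}{4} H'(z, s) \oplus 0$ then gives a non-vanishing $D_4$-equivariant homotopy on $M$. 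Because the three pieces are pairwise disjoint $D_4$-invariant closed subsets, these three homotopies glue to a single continuous $D_4$-equivariant non-vanishing homotopy on $(S^n)^{\ast 2}_{>1}$.

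For part (b), let $H_0$ be the homotopy from part (a), and set $A := (S^n)^{\ast 2} \times \{0, 1\} \cup (S^n)^{\ast 2}_{>1} \times I$, a closed $D_4$-invariant subspace of $X$ (where $D_4$ acts trivially on $I$). Define a continuous $D_4$-equivariant map $h \colon A \to U_2^{\oplus m} \oplus W_2 \oplus V_2$ by $h(\cdot, 0) = J(f_1)$, $h(\cdot, 1) = J(f_2)$, and $h|_{(S^n)^{\ast 2}_{>1} \times I} = H_0$; these pieces agree on overlaps by the construction of $H_0$. Since the target is a real vector space and $A$ is closed in the normal space $X$, Tietze's theorem yields a continuous extension $g \colon X \to U_2^{\oplus m} \oplus W_2 \oplus V_2$, and the equivariant average $H(x) := \frac{1}{|D_4|} \sum_{\gamma \in D_4} \gamma^{-1} g(\gamma x)$ is $D_4$-equivariant. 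Because $h$ was already $D_4$-equivariant on $A$, we have $H|_A = h$; thus $H$ is a $D_4$-equivariant homotopy from $J(f_1)$ to $J(f_2)$ whose restriction to $(S^n)^{\ast 2}_{>1} \times I \subseteq A$ equals $H_0$ and is therefore non-vanishing. The main obstacle I anticipate is simply pinning down that Remark~\ref{rem:shielding} applies on $M$ with the correct parameters (the connectivity of $(U^\pm \oplus V_2)\setminus\{0\}$ must dominate the cell-dimensions appearing when the domain is upgraded to $S^n$, which is exactly the content of the remark for $m \geq 2$); once this is verified the gluing step and the equivariant Tietze-plus-averaging are routine.
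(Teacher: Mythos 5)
Your proposal is correct and is essentially the paper's argument: part (a) is exactly the paper's step of taking the non-vanishing $D_4$-homotopy on $(S^n)^2_{>1}\times I$ supplied by Remark~\ref{rem:shielding} and feeding it through the join formula (the paper writes the single formula $J(H)$, which on $\mu\in\{0,1\}$ is your stationary constant and on $\mu=1/2$ is your scaled homotopy on $M$), and part (b) is the same unobstructed extension over the free part --- the paper extends cell-by-cell using that the full codomain is contractible, while your Tietze-plus-averaging trick is an equivalent routine substitute since the target is a linear $D_4$-representation and no non-vanishing is required off $(S^n)^{\ast 2}_{>1}\times I$. One small correction: with the smooth join \eqref{eqn:alt} the coefficient at $\mu=1/2$ is $\cos(\pi/4)\sin(\pi/4)=\tfrac12$ (the factor $\tfrac14$ belongs to $J_0$), so on $M$ you should use $\tfrac12 H'(z,s)\oplus 0$ in order for the endpoints to agree literally with $J(f_1)$ and $J(f_2)$ in the gluing.
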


\begin{remark}
\label{rem:shielding2}
As with Proposition~\ref{prop:shielding}, it will be important to note that Proposition~\ref{prop:shielding2} also holds when $d=-1$, that is, with the module $V_2$ omitted. 
\end{remark}
 
\begin{proof}[Proof of Proposition~\ref{prop:shielding2}] For the first claim, Remark~\ref{rem:shielding} above shows that there is a $D_4$-equivariant homotopy $H\colon (S^n)^2_{>1}\times I \rightarrow \R^d$ between $f_1$ and $f_2$. Defining $J(H)\colon {(S^n)^{\ast 2}_{>1}}\times I \rightarrow (U_2^{\oplus m}\oplus V_2\oplus W_2) \setminus\{0\}$  by $$J(H)(z,t)=\cos(\pi \mu/2)\sin (\pi \mu/2)H(y,t)\oplus \left(\cos^2(\pi \mu/2)-1/2,\sin^2(\pi \mu/2)-1/2\right)$$  gives the desired $D_4$-equivariant homotopy between $J(f_1)$ and~$J(f_2)$. 

For the second claim, we use elementary equivariant obstruction theory to extend $J(H)$ to an equivariant homotopy $\widehat{J(H)}\colon (S^n)^{\ast 2} \times I\rightarrow U_2^{\oplus m}\oplus V_2$. Namely, given the $D_4$-equivariant structure on~$(S^n)^{\ast 2}$, one has a corresponding $D_4$-equivariant structure on $(S^n)^{\ast 2}\times I$ by letting $D_4$ act trivially on~$I$. Our homotopy is already defined on $(S^n)^2\times\{0,1\}$ as well as the non-free part of $(S^n)^2\times \{0,1\}$, which are both $D_4$-invariant subcomplexes. The desired equivariant extension now exists simply because the codomain $U_2^{\oplus m}\oplus V_2\oplus W_2$ is contractible. 
\end{proof}

Let us outline the argument to follow. As discussed in Sections~\ref{sec:previous} and~\ref{sec:6.3}, the map $G\colon (S^c)^2\rightarrow U_2^{\oplus m}$  will be chosen so that  $J(\widetilde{G})$ is smooth, transverse to zero, and has a zero set $J(\widetilde{G})^{-1}(0)$ consisting of an odd number of full $D_4$-orbits. Using a combination of the Whitney approximation and Thom transversality theorems, we will replace (Section~\ref{sec:6.4}) the continuous equivariant homotopy between $J(\widetilde{G})$ and $J(F)$ guaranteed by Proposition~\ref{prop:shielding2} with a smooth equivariant homotopy between $J(\widetilde{G})$ and some $D_4$-equivariant non-vanishing smooth function. This homotopy will be transverse to 0 and non-vanishing on the non-free part of $(S^n)^{\ast 2}\times I$, from which it will follow that $J(\widetilde{G})$ has an even number of full $D_4$-orbits. This contradiction establishes that any equivariant $F$ must vanish and so completes the proof of Theorem~\ref{thm:transversal-bounds}.

\subsection{Previous Calculations}
\label{sec:previous} 

The existence of the map $G$ will be based on the Borsuk--Ulam type results of ~\cite{BFHZ16} arising from the $m=2^s$ case of the Gr\"unbaum--Hadwiger--Ramos problem. Crucial to these was the existence of a certain $D_4$-equivariant CW--structure on $(S^n)^{\ast 2}$ for any $n\geq 1$.  While we shall not need the details of its construction, we note that it is obtained by intersecting the unit sphere of $(\R^{n+1})^2$ with a cone stratification on $(\R^{n+1})^2$ originally due to Fox--Neuwirth~\cite{FN72} and Bj\"orner--Ziegler~\cite{BZ92}. We refer the interested reader to ~\cite[Section 3]{BFHZ16} for further details. We summarize the key properties are as follows  (see ~\cite[Theorem 3.11]{BFHZ16}). Observe that $\dim (S^n)^{\ast 2}_{>1}=n$, so that none of the top (open) cells of $(S^n)^{\ast 2}$ lie in $(S^n)^{\ast 2}_{>1}$.

\begin{theorem}
\label{thm:CW} 

There is a $D_4$-equivariant CW structure on $(S^n)^{\ast 2}$ such that 

\begin{itemize}

\item $(S^n)^{\ast 2}_{>1}$ is a $D_4$-equivariant subcomplex and the action on the cells of $(S^n)^{\ast 2}\setminus (S^n)^{\ast 2}_{>1}$ is free, and

\item the top-dimensional cells of $(S^n)^{\ast 2}$ consist of a full $D_4$-orbit.

\end{itemize}

\end{theorem}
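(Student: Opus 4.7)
The plan is to equip the ambient space $(\R^{n+1})^2$ with a $D_4$-invariant cone stratification in the spirit of Fox--Neuwirth and Bj\"orner--Ziegler, then intersect with the unit sphere $S^{2n+1}$ and identify it with $(S^n)^{\ast 2}$ via the standard great-circle realization of the join. Because any cone stratification restricts to a CW decomposition of the sphere, the resulting cell structure on $(S^n)^{\ast 2}$ will inherit its $D_4$-equivariance from the ambient one, so the three claims only have to be verified at the combinatorial level of strata.

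First I would define the stratification. Label each pair $(x_1,x_2) \in (\R^{n+1})^2$ by a combinatorial type recording, for every coordinate $\ell = 1, \ldots, n+1$, the signs of $(x_1)_\ell$ and $(x_2)_\ell$ together with their comparison, augmented by the lexicographic data in the spirit of~\cite{BZ92}. The strata are the level sets of this labeling. Each is a relatively open convex cone, the closure of a stratum is a union of strata, and the whole decomposition is invariant under positive scaling. Intersecting with $S^{2n+1}$ and using $S^{2n+1} \cong (S^n)^{\ast 2}$ yields a regular CW decomposition of $(S^n)^{\ast 2}$.

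Next I would verify $D_4$-equivariance. The two generators of $\mathbb{Z}_2^2$ negate $x_1$ or $x_2$ and permute strata by flipping signs in the corresponding column of the combinatorial data; the transposition in $\mathfrak{S}_2$ swaps $x_1 \leftrightarrow x_2$ and permutes strata by exchanging columns. Both preserve the class of labels, so the stratification is $D_4$-invariant and the cell structure is $D_4$-equivariant. To establish the subcomplex and freeness claims, note that $(S^n)^{\ast 2}_{>1}$ corresponds in $(\R^{n+1})^2$ to the subset cut out by $x_1 = 0$, $x_2 = 0$, or $x_1 = \pm x_2$; each of these conditions depends only on the combinatorial type, so the non-free part is a union of strata, hence a subcomplex. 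Conversely, a case check over the eight elements of $D_4$ shows that the only strata invariant under a nontrivial element are exactly those in this list, establishing freeness on the complement.

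For the top-cell claim I would argue that top strata of $(\R^{n+1})^2$ correspond to labels in which every coordinate of $x_1$ and $x_2$ is nonzero and all comparisons are strict. Using the Bj\"orner--Ziegler refinement one can arrange that these labels form a single $D_4$-orbit, so that the $(2n+1)$-dimensional cells of $(S^n)^{\ast 2}$ form a full $D_4$-orbit. The main obstacle I expect is precisely this last step: a naive sign-pattern labeling produces $4^{n+1}$ top strata, so one must coarsen the labeling just enough for $D_4$ to act transitively on top cells while still separating the non-free part. Balancing these two requirements is the crux of the construction and is where I would expect the bulk of the work to lie, following~\cite{BFHZ16}.
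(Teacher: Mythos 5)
Your overall strategy is the same one the paper relies on: Theorem~\ref{thm:CW} is not proved in this paper at all, but quoted from~\cite[Theorem 3.11]{BFHZ16}, whose construction is exactly the one you describe in outline --- intersect the unit sphere of $(\R^{n+1})^2$ with a Fox--Neuwirth/Bj\"orner--Ziegler cone stratification and identify the result with $(S^n)^{\ast 2}$. Your identification of the non-free part with the locus $x_1=0$, $x_2=0$, or $x_1=\pm x_2$ is also correct. However, as a standalone proof your write-up has a genuine gap, and it sits precisely at the point you yourself flag as ``the crux'': the claim that the top-dimensional cells form a single free $D_4$-orbit is asserted, not established. The labeling you actually propose --- recording, coordinate by coordinate, the signs of $(x_1)_\ell$ and $(x_2)_\ell$ and their comparison --- produces on the order of $4^{n+1}$ (or more) top strata, and ``coarsening just enough'' is not a construction: an arbitrary union of strata of a stratification is not a cell, so a coarsened labeling need not yield a regular CW decomposition at all, let alone one whose closure poset, attaching maps, and cell stabilizers behave as required. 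The Fox--Neuwirth/Bj\"orner--Ziegler cells are not per-coordinate sign vectors but lexicographic ``first nonvanishing / first difference'' data (for a single sphere this is what gives exactly two cells $e^i_\pm$ in each dimension; for the join of two spheres one records vanishing tails and leading signs of $x_1$, $x_2$, and $x_1\mp x_2$), and it is this specific combinatorics that yields exactly $8=|D_4|$ top cells permuted simply transitively.

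The same omission undermines your freeness argument: the ``case check over the eight elements of $D_4$'' showing that no cell outside the non-free subcomplex is setwise invariant can only be carried out once the cells are concretely specified, and it matters downstream, since the relative equivariant obstruction theory in Proposition~\ref{prop:Borsuk-Ulam} needs cells off $(S^n)^{\ast 2}_{>1}$ to be freely permuted (a cell merely being disjoint from the non-free part of the \emph{point-set} action does not by itself rule out a nontrivial element mapping that cell to itself). So either cite~\cite[Section 3 and Theorem 3.11]{BFHZ16} as the paper does, or carry out the lexicographic cell description explicitly, verify that closures of cells are subcomplexes, that the eight generic cells are $\{(x_1,x_2):$ prescribed signs of the last coordinates of $x_1$ and of $x_1\mp x_2\}$-type strata forming one orbit, and that every nontrivial stabilizer occurs only on the subcomplex $(S^n)^{\ast 2}_{>1}$. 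As written, the proposal identifies the right blueprint but leaves unproven exactly the two bullet points that constitute the theorem.
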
 

Let $m=2^s$ with $s\geq 1$, and let $c=3m/2$. By evaluating measures concentrated on a non-standard moment curve, it was shown in ~\cite[Section 4.3]{BFHZ16} that there is a continuous $D_4$-equivariant map $G_0\colon (S^c)^2\rightarrow U_2^{\oplus m}$ such that $J_0(G_0)\colon (S^c)^{\ast 2}\rightarrow U_2^{\oplus m}\oplus W_2$ the following:  

\begin{itemize}

\item $J_0(G_0)$ does not vanish on the subcomplex $((S^c)^{\ast 2})^{(2c)}$ of cells of codimension one. 
  \item  $J_0(G_0)$ has an odd number of zeros in a generating top-dimensional cell of $(S^n)^{\ast 2}$, and therefore $J_0(G)^{-1}(0)$ consists of an odd number of full $D_4$-orbits.
 \end{itemize}
  
In particular, note that $J_0(G_0)$ does not vanish on the non-free part $(S^n)^{\ast 2}_{>1}$, and therefore $G_0$ does not vanish on $(S^c)^2_{>1}$. As the zero sets of $J_0(G_0)$ and $J(G_0)$ coincide, we conclude that the same facts hold for $J(G_0)$. By the explicit obstruction theory calculations given in~\cite{BFHZ16}, one may state the following:

\begin{proposition}
\label{prop:Borsuk-Ulam}
Let $m=2^s$ and $c=3m/2$, where $s\geq 1$. Suppose that $G\colon (S^c)^2\rightarrow U_2^{\oplus m}$ is a continuous $D_4$-equivariant map such that $J(G)\colon (S^c)^{\ast 2} \rightarrow U_2^{\oplus m}\oplus W_2$ is  non-vanishing on $[(S^c)^{\ast 2}]^{(2c)}$,  $J(G)^{-1}(0)$ is finite, and $J(G)$ is a local homeomorphism at each zero of $J(G)$. Then $|J(G)^{-1}(0)|$ is an odd multiple of 8. 
\end{proposition}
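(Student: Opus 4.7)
My plan is to combine a mod-$8$ divisibility argument coming from equivariance with a bordism comparison to the reference map $G_0$ constructed in Section~\ref{sec:previous}, using the shielding proposition of Section~\ref{sec:transversal} to control the non-free stratum. Since $J(G)$ does not vanish on the codimension-one skeleton $[(S^c)^{\ast 2}]^{(2c)}$, every zero of $J(G)$ lies in an open top cell, and by Theorem~\ref{thm:CW} these top cells form a single free $D_4$-orbit. The $D_4$-equivariance of $J(G)$ then forces $J(G)^{-1}(0)$ to be a disjoint union of free $D_4$-orbits of size $8$; writing $N(G) := |J(G)^{-1}(0)\cap \sigma|$ for a fixed top cell $\sigma$, we have $|J(G)^{-1}(0)| = 8\,N(G)$, and it suffices to prove that $N(G)$ is odd.

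To compare with $G_0$, I would first observe that both $J(G)$ and $J(G_0)$ are non-vanishing on $(S^c)^{\ast 2}_{>1}$, since this subcomplex lies inside the codimension-one skeleton; equivalently, both $G$ and $G_0$ are non-vanishing on $(S^c)^2_{>1}$. Proposition~\ref{prop:shielding2}(b), applied with the module $V_2$ omitted as permitted by Remark~\ref{rem:shielding2}, then produces a continuous $D_4$-equivariant homotopy $H\colon (S^c)^{\ast 2}\times I \to U_2^{\oplus m}\oplus W_2$ from $J(G)$ to $J(G_0)$ that is non-vanishing on $(S^c)^{\ast 2}_{>1}\times I$. Before invoking this, I would first smoothen $G$ and $G_0$ by tiny equivariant perturbations, noting that because each zero of $J(G)$ (resp.\ $J(G_0)$) is a local homeomorphism, it persists in a small neighborhood under any sufficiently small perturbation, so the parities of $N(G)$ and $N(G_0)$ are preserved. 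Equivariant Whitney approximation followed by equivariant Thom transversality on $H$, performed rel $(S^c)^{\ast 2}\times\{0,1\}$ and kept small enough to preserve non-vanishing on the compact set $(S^c)^{\ast 2}_{>1}\times I$, then yield a smooth $D_4$-equivariant $H$ transverse to $0$ and still non-vanishing on the non-free stratum.

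The bordism count is then immediate. The preimage $Z := H^{-1}(0)$ is a smooth compact $1$-manifold with $\partial Z \subset (S^c)^{\ast 2}\times\{0,1\}$ of total size $8(N(G)+N(G_0))$. Since $Z$ avoids $(S^c)^{\ast 2}_{>1}\times I$ by construction, the $D_4$-action on $Z$ is free, so $Z/D_4$ is a smooth compact $1$-manifold whose boundary consists of $N(G)+N(G_0)$ points. A compact $1$-manifold has an even number of boundary points, hence $N(G)\equiv N(G_0)\equiv 1 \pmod 2$, giving that $|J(G)^{-1}(0)|$ is an odd multiple of~$8$. The main technical obstacle is arranging the smoothing, boundary-fixing, and transversality of $H$ simultaneously while preserving non-vanishing on the non-free stratum; the crucial structural input making this feasible is Proposition~\ref{prop:shielding2}(b), since a naive dimension count of the non-free locus against the $\tau$-fixed subspace of the codomain already breaks down in the smallest case $s=1$, where the $y_1=y_2$ stratum times $I$ and the $\tau$-fixed subspace of $U_2^{\oplus m}\oplus W_2$ have matching dimensions.
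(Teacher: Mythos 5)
Your argument is correct in substance, but it proves the proposition by a different mechanism than the paper does. The paper's proof stays inside equivariant obstruction theory: it radially projects $J(G)$ on the $2c$-skeleton, identifies the value of the obstruction cocycle on a generating top cell with the signed count of zeros of $J(G)$ in that cell, uses the $d=-1$ case of Proposition~\ref{prop:shielding2} to conclude $[\mathfrak{o}(\rho\circ J(G))]=[\mathfrak{o}(\rho\circ J(G_0))]$, and then invokes the explicit computation of~\cite[Section 4.3.2]{BFHZ16} that this class vanishes if and only if the coefficient on the generating cell is even, so oddness for $G_0$ transfers to $G$. You instead replace that cohomological step by an equivariant bordism parity count: a smooth, transverse $D_4$-homotopy from $J(G)$ to $J(G_0)$ that avoids the non-free stratum has zero set a compact $1$-manifold with free $D_4$-action, whose boundary therefore consists of an even number of orbits; this is exactly the technique the paper deploys later (Section~\ref{sec:6.4}, proof of Theorem~\ref{thm:transversal-bounds}), so your proof in effect pulls that argument one level down and avoids quoting the cocycle computation. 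The trade-off is the endpoint bookkeeping: you must perturb $G$ and $G_0$ to smooth maps whose joins are transverse to zero while controlling the parity of the per-cell zero count, and for this mere ``persistence'' of zeros is not enough --- what you need is that the local degree at each zero is $\pm 1$, so that any nearby transverse map has an odd number of zeros near each original zero. For $G$ this follows from the local-homeomorphism hypothesis, but for $G_0$ it is not among the properties listed in Section~\ref{sec:previous}; it does hold for the explicit map of~\cite[Section 4.3]{BFHZ16}, and the paper's own proof implicitly uses the same fact when converting ``odd number of zeros of $J(G_0)$'' into ``odd degree,'' so you should state that input explicitly rather than attribute it to the hypotheses. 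With that caveat, both routes consume the same external ingredients (Theorem~\ref{thm:CW}, Proposition~\ref{prop:shielding2} with Remark~\ref{rem:shielding2}, and the odd zero count of $G_0$ from~\cite{BFHZ16}); yours is more geometric and self-contained on the obstruction-theory side, while the paper's avoids the smoothing and relative transversality work at the two ends of the homotopy.
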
 

\begin{proof}[Proof of Proposition~\ref{prop:Borsuk-Ulam}] For $G\colon (S^c)^2\rightarrow U_2^{\oplus m}$ as in the statement of Proposition~\ref{prop:Borsuk-Ulam}, it follows that $G$ is  non-vanishing on $((S^c)^{\ast 2})^{(2c-1)}$ and in particular is non-vanishing on $(S^c)^2_{>1}$. By Remark~\ref{rem:shielding2} following Proposition~\ref{prop:shielding2}, the restrictions of $J(G)$ and $J(G_0)$ to $(S^c)^{\ast 2}_{>1}$ are necessarily $D_4$-equivariantly homotopic via a non-vanishing homotopy.  From here, we appeal to the equivariant obstruction theory computations given in ~\cite[Section 4.3.2]{BFHZ16} which we now summarize.  

First, as $J(G)$ is non-vanishing on $[(S^c)^{\ast 2}]^{(2c)}$, composition with the ($D_4$-equivariant) radial projection $\rho\colon (U_2^{\oplus m}\oplus W_2)\setminus \{0\}\rightarrow S(U_2^{\oplus m}\oplus W_2)$ gives a $D_4$-equivariant map $$\alpha=\rho\circ J(G) \colon [(S^c)^{\ast 2}]^{(2c)}\rightarrow S(U_2^{\oplus m}\oplus W_2).$$ 

Consider the obstruction co-cycle $$\mathfrak{o}(\alpha)\in C^{2c+1}_{D_4}\left([(S^c)^{\ast 2},(S^c)^{\ast 2}_{>1}; \pi_{2c}(S(U_2^{\oplus m}\oplus V_2\oplus W_2))\right).$$ For each top-dimensional cell $e$ of $(S^c)^{\ast 2}$, $$\mathfrak{o}(\alpha)(e)\in \pi_{2c}(S(U_2^{\oplus m}\oplus W_2))\cong \mathbb{Z}$$ is the homotopy class of the composition $\alpha \circ \varphi\colon S^{2c} \rightarrow S(U_2^{\oplus m}\oplus W_2)$ of $\alpha$ with the attaching map  $\varphi\colon S^{2c} \rightarrow [(S^c)^{\ast 2}]^{(2c)}$, and so $\mathfrak{0}(e)$ can therefore be identified with the mapping degree $\deg(\alpha\circ \varphi)$ once a generator is chosen. As $\alpha$ is non-vanishing on $[(S^c)^{\ast 2c}]^{(2c)}$, it is a standard fact of mapping degree theory (see, e.g.,~\cite[Proposition 4.5]{OR09}) that $\deg(\alpha\circ \varphi)$ is the sum of the zeros of $J(G)$ which lay in the (open) cell $e$, where each such zero is counted with signs and multiplicities. As $J(G)$ is a local homeomorphism at each zero of $J(G)$ by assumption, each multiplicity is either $+1$ or $-1$.

Now consider the obstruction class $$[\mathfrak{o}(\alpha)]\in \mathcal{H}^{2c+1}_{D_4}\left((S^c)^{\ast 2},{(S^c)^{\ast 2}_{>1}}); \pi_{2c}(S(U_2^{\oplus m}\oplus W_2))\right).$$ Letting $\alpha_0=\rho\circ J(G_0)\mid_{[(S^c)^{\ast 2}]^{2c}}$, it follows from the $d=-1$ case of Proposition~\ref{prop:shielding2} that $\alpha$ and $\alpha_0$ are $D_4$-equivariantly homotopic, and therefore that  $[\mathfrak{o}(\alpha)]=[\mathfrak{o}(\alpha_0)]$. 

 Let $e$ denote a $D_4$-generator of $\mathcal{C}^{2c+1}_{D_4}\left((S^c)^{\ast 2},{(S^c)^{\ast 2}_{>1}}); \pi_{2c}(S(U_2^{\oplus m}\oplus W_2))\right)$, and let $\zeta$ denote a generator of $\pi_{2c}(S(U_2^{\oplus m}\oplus W_2))\cong \mathbb{Z}$. Thus $\mathfrak{o}(\alpha)=a\cdot \zeta$, where $a=\deg(\rho\circ \alpha_0)$. The explicit computations of ~\cite[Section 4.3.2]{BFHZ16} show that $[\mathfrak{o}(\alpha)]=0$ if and only if $a$ is even. On the other hand, the discussion above shows that number of zeros of $J(G_0)$ in the open cell $e$ is odd, so $\deg(\rho\circ \alpha_0)$ is odd and $[\mathfrak{o}(\alpha)]\neq 0$. This implies that $\deg(\rho \circ \alpha)$ is odd as well, and as this degree is the sum $\sum_{z\in J(G)^{-1}(0)\cap e} \pm 1$ ranging over the zeros of $J(G)$, we conclude that $|J(G)^{-1}(0)\cap e|$ is also odd. Finally, the zeros of $J(G)$ all lie in top-dimensional cells by assumption, and by Theorem~\ref{thm:CW}(b) the top-dimensional cells are a free $D_4$-orbit of the generating cell $e$. By equivariance, each top-dimensional cell has the same number of zeros, and therefore $|J(G)^{-1}(0)|$ is an odd multiple of 8, as claimed. \end{proof}

\subsection{Construction of the maps $G$ and $\widetilde{G}$}
\label{sec:6.3} 

Starting with the $D_4$-equivariant map $G_0\colon (S^c)^2\rightarrow U_2^{\oplus m}$ above, it will follow from an application of Thom transversality that there exists smooth $D_4$-equivariant map $G\colon (S^c)^2\rightarrow U_2^{\oplus m}$ so that $J(G)\colon (S^c)^2\rightarrow U_2^{\oplus m}\oplus W_2$ is transverse to zero and is non-vanishing on $[(S^c)^{\ast 2}]^{(2c)}$. It follows automatically that  $J(G)^{-1}(0)$ is a 0-dimensional manifold, hence is finite, and by the transversality condition that $J(G)$ is a local homeomorphism at each zero of~$J(G)$. By Proposition~\ref{prop:Borsuk-Ulam}, we therefore have that $|J(G)^{-1}(0)|$ is an odd multiple of 8. 

\begin{proposition} 
\label{prop:regular} 

Let $m=2^s$ and $c=3m/2$, where $s\geq 1$. Then there exists a smooth $D_4$-equivariant map $G\colon (S^c)^2\rightarrow U_2^{\oplus m}$ which is transverse to zero such that $J(G)\colon (S^c)^{\ast 2} \rightarrow U_2^{\oplus m}\oplus W_2$ is non-vanishing on $[(S^c)^{\ast 2}]^{(2c)}$ and $J(G)$ is transverse to zero. In particular, $|G^{-1}(0)|$ is an odd multiple of 8. 
\end{proposition}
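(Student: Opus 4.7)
The plan is to start from the continuous $D_4$-equivariant map $G_0\colon (S^c)^2\to U_2^{\oplus m}$ recalled at the beginning of Section~\ref{sec:previous} and deform it in two controlled stages, first into a smooth equivariant map and then into a transverse one, while preserving the non-vanishing of $J(\cdot)$ on the codimension-one skeleton. Write $K = [(S^c)^{\ast 2}]^{(2c)}$, which is compact, so $\varepsilon := \min_{z\in K} \|J(G_0)(z)\| > 0$. First I would invoke an equivariant smoothing theorem: standard mollification produces a smooth approximation to $G_0$, and averaging over the finite group $D_4$ restores equivariance while keeping the sup-norm distance from $G_0$ as small as one wishes. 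The explicit formula~(\ref{eqn:alt}) shows that $J$ depends continuously on its argument, so by choosing the smoothing $G_1$ sufficiently close to $G_0$ I can guarantee that $J(G_1)$ still does not vanish on $K$; in particular it does not vanish on $(S^c)^{\ast 2}_{>1}\subset K$, so $G_1$ does not vanish on $(S^c)^2_{>1}$.

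I would then apply an equivariant version of Thom's transversality theorem to perturb $G_1$ to the desired $G$. A direct inspection of~(\ref{eqn:alt}) shows that $J(G_1)(z)=0$ holds exactly when $\mu = 1/2$ and $G_1(y)=0$, and that the $W_2$-block of $dJ(G_1)$ is surjective at every such point (its $\mu$-derivative comes from the nonzero derivative of $\cos^2(\pi\mu/2)-1/2$ at $\mu=1/2$). Consequently, transversality of $J(G_1)$ to $0$ reduces to transversality of $G_1$ to $0$ on $(S^c)^2$, which is an open and dense condition on smooth maps. By Theorem~\ref{thm:CW}, the top-dimensional open cells of $(S^c)^{\ast 2}$ form a single free $D_4$-orbit, and since $J(G_1)$ is non-vanishing on $K$ the zeros of $G_1$ lie in a $D_4$-free open subset of $(S^c)^2$. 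I would therefore choose a generic smooth compactly supported perturbation inside one fundamental top cell, extend it equivariantly via the $D_4$-action, cut it off away from the non-free part, and keep its amplitude below $\varepsilon/2$. The resulting $G$ is smooth, $D_4$-equivariant, and transverse to $0$, and $J(G)$ still does not vanish on $K$.

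To conclude, I would feed $G$ into Proposition~\ref{prop:Borsuk-Ulam}. Dimension matching $\dim (S^c)^{\ast 2} = 2c+1 = \dim(U_2^{\oplus m}\oplus W_2)$ turns transversality of $J(G)$ to $0$ into the statement that $J(G)^{-1}(0)$ is a finite set of points at which $J(G)$ is a local diffeomorphism; moreover every such zero lies in a top-dimensional free $D_4$-cell. Proposition~\ref{prop:Borsuk-Ulam} then gives $|J(G)^{-1}(0)|$ an odd multiple of $8$, and the bijection $y\mapsto (y,\tfrac12)$ between $G^{-1}(0)$ and $J(G)^{-1}(0)$ transfers this count to $G^{-1}(0)$. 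The main obstacle I anticipate is the equivariance of the transversality step: one must arrange the perturbation on a single fundamental domain so that its $D_4$-translates neither conflict with one another nor spoil the non-vanishing of $J(G)$ on $K$. The freeness of the $D_4$-action on the top cells supplied by Theorem~\ref{thm:CW} is precisely what makes this routine, since the equivariant extension is then uniquely determined and does not reach back into the codimension-one skeleton that carries the non-free stratum.
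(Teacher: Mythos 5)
Your proposal is correct and follows essentially the same route as the paper: smooth the map $G_0$ of~\cite{BFHZ16} equivariantly while preserving, by compactness, the non-vanishing of the join map on $[(S^c)^{\ast 2}]^{(2c)}$, then perturb equivariantly in the free part to achieve transversality (using that transversality of $J(G)$ at $0$ reduces to that of $G$), and finally invoke Proposition~\ref{prop:Borsuk-Ulam}. The only difference is cosmetic: where you mollify-and-average and build the transversality perturbation by hand on a fundamental top cell, the paper simply cites equivariant Whitney approximation~\cite[Theorem 4.2, Chapter 6]{Br72} and the relative equivariant Thom transversality theorem of~\cite[Proposition 2.2]{LW75}, relative to the $(2c-1)$-skeleton where the map is already non-vanishing.
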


\begin{proof}[Proof of Proposition~\ref{prop:regular}]
Let $G_0\colon (S^c)^2\rightarrow U_2^{\oplus m}$ be as above. By (equivariant) Whitney approximation~\cite[Theorem 4.2, Chapter 6]{Br72}, for any $\varepsilon>0$ there exists a smooth $D_4$-equivariant map $G^\varepsilon_0$  which is a uniform $\varepsilon$-approximation of $G_0$. It follows immediately from the explicit formula for the join map that $J(G^\varepsilon_0)$ is a uniform $\varepsilon$-approximation of $J(G)$. As each $G_0^\varepsilon$ is smooth, the explicit formula for a join map  shows that each $J(G_0^\varepsilon)$ is smooth as well. By compactness, it follows that $J(G_0^\varepsilon)$ is non-vanishing on $[(S^c)^{\ast 2}]^{(2c)}$ for some $\varepsilon>0$. Let $G_1$ be such a map. In particular, $G_1$ is non-vanishing on the $(2c-1)$ skeleton of $(S^c)^2$.

By an equivariant version~\cite[Proposition 2.2]{LW75} of relative Thom transversality (see, e.g., ~\cite[Theorem 1.35]{Mi58}), for any $\varepsilon >0$ there is a $D_4$-equivariant smooth map $G_1^\varepsilon$ which is a uniform $\varepsilon$-approximation of $G_1$, is transverse to zero, and is equal to $G_1$ on $[(S^c)^2]^{(2c-1)}$.
This is because the $D_4$-action on $X:=(S^c)^2\setminus [((S^c)^2)^{(2c-1)}]$ is free and $G_1$ is automatically transverse to 0 on $[(S^c)^2]^{(2c-1)}$ because $G_1$ is non-vanishing there, so that $G_1$ can be modified without changing its value on the $(2c-1)$-skeleton to obtain a uniform $\varepsilon$-approximation which is transverse to 0. By the formula for join maps above, it follows that each $J(G_1^\varepsilon)$ is transverse to zero as well (and is a uniform $\varepsilon$-approximation of~$J(G_1)$). Finally, by compactness, there exists some $\varepsilon>0$ such that $J(G_1^\varepsilon)$ is non-vanishing on all of~$[(S^c)^{\ast 2}]^{(2c)}$. Letting $G=G_1^\varepsilon$ for some such $\varepsilon$ completes the proof. 
\end{proof}

Now let $d\geq 1$ be arbitrary and let $n=d+c+1$. As in Section~\ref{sec:degree}, we let $S^n=S^c\ast S^d$. For each $y=(y_1,y_2)\in (S^n)^2$ we have $y_i=(\cos(\pi t_i/2)y_i^c, \sin(\pi t_i/2)y_i^d)$ with $y_i^c\in S^c$, $y_i^d\in S^d$, and $t_i\in [0,1]$. For $G\colon (S^c)^2\rightarrow U_2^{\oplus m}$ as given by Proposition~\ref{prop:regular}, we define a smooth $D_4$--equivariant extension
$$\widetilde{G}\colon (S^n)^2\rightarrow U_2^{\oplus m}\oplus V_2$$ by \begin{equation}
   \widetilde{G}(y)=\cos(\pi t_1/2)\cos(\pi t_2/2)~G(y_1^c,y_2^c)\oplus (\sin(\pi t_1/2)y_1^d,\sin(\pi t_2/2)y_2^d). \end{equation} As $G$ is transverse to zero, it follows that $\widetilde{G}$ is transverse to zero as well. Moreover, the zeros of $G$ and $\widetilde{G}$ coincide, so $\widetilde{G}$ is non-vanishing on $[(S^n)^2]^{(2n-1)}$. Considering $J(\widetilde{G})\colon (S^n)^{\ast 2}\rightarrow U_2^{\oplus m}\oplus V_2\oplus W_2$, we likewise conclude that it is transverse to zero and non-vanishing on $[(S^n)^{\ast 2}]^{(2n)}$. Again, the zeros of $J(G)$ and $J(\widetilde{G})$ coincide, so $J(\widetilde{G})^{-1}(0)$ consists of an odd number of free $D_4$-orbits.

\subsection{An Equivariant Homotopy Argument}
\label{sec:6.4} 

Recall that we are assuming that $F\colon (S^n)^2\rightarrow U_2^{\oplus m}\oplus V_2$ is never vanishing, and thus and so is its join extension. In order to make our smooth equivariant homotopy argument, we will need to show the existence of a transverse homotopy between $J(\widetilde{G})$ and a non-vanishing function. As with Proposition~\ref{prop:Borsuk-Ulam}, this follows from an application of Whitney approximation and Thom transversality.

\begin{proposition}
\label{prop:transverse homotopy} If $F\colon (S^n)^2\rightarrow U_2^{\oplus m}\oplus V_2$ is never vanishing, then there exists a smooth homotopy $H\colon (S^n)^{\ast 2}\times I \rightarrow U_2^{\oplus m}\oplus V_2\oplus W_2$ with the following properties:

\begin{itemize}
\item $H$ is transverse to zero,
\item $H$ coincides with $J(\widetilde{G})$ on $(S^n)^{\ast 2} \times \{0\}$, and
\item $H$ is non-vanishing on $(S^n)^{\ast 2}_{>1}\times I$ as well as on $(S^n)^{\ast 2}\times \{1\}$.
\end{itemize}
\end{proposition}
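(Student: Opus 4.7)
The plan is to construct $H$ in three stages: first a continuous $D_4$-equivariant homotopy via Proposition~\ref{prop:shielding2}(b), then a smooth approximation that keeps the value at $t=0$ equal to $J(\widetilde G)$, and finally a small perturbation making the map transverse to zero while preserving both smoothness on the boundary $t=0$ and the non-vanishing behaviour on the relevant compact sets. The key ingredient is that ``non-vanishing on a compact set'' and ``transversality to $\{0\}$'' are both open conditions on smooth maps in the $C^0$-topology, so sufficiently small perturbations accomplish both goals simultaneously.

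First I would apply Proposition~\ref{prop:shielding2}(b) with $f_1=\widetilde G$ and $f_2=F$. Both are $D_4$-equivariant. The map $\widetilde G$ is non-vanishing on $(S^n)^2_{>1}$: by the explicit formula in Section~\ref{sec:6.3}, every zero of $\widetilde G$ lies at $t_1=t_2=0$ and corresponds to a zero of $G$ on $(S^c)^2$, but Proposition~\ref{prop:regular} gives $J(G)$ non-vanishing on $[(S^c)^{\ast 2}]^{(2c)}\supseteq (S^c)^{\ast 2}_{>1}$, which forces $G$ to be non-vanishing on $(S^c)^2_{>1}$. The map $F$ is non-vanishing everywhere by hypothesis. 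This produces a continuous $D_4$-equivariant homotopy $H_0\colon (S^n)^{\ast 2}\times I\to U_2^{\oplus m}\oplus V_2\oplus W_2$ from $J(\widetilde G)$ to $J(F)$ that does not vanish on $(S^n)^{\ast 2}_{>1}\times I$. The formula for the join map moreover shows that $J(F)(z)=0$ requires $\mu=1/2$ and $F(y)=0$, which is impossible; hence $J(F)$ is non-vanishing on $(S^n)^{\ast 2}$, so $H_0$ also does not vanish on $(S^n)^{\ast 2}\times\{1\}$. Setting $K:=\bigl((S^n)^{\ast 2}_{>1}\times I\bigr)\cup\bigl((S^n)^{\ast 2}\times\{1\}\bigr)$, compactness yields some $\delta>0$ with $\inf_K\|H_0\|\ge\delta$.

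Next I would smooth $H_0$ relative to $A:=(S^n)^{\ast 2}\times\{0\}$. Since $H_0|_A=J(\widetilde G)$ is already smooth by construction, relative Whitney approximation yields a smooth map $H_1$ with $H_1|_A=J(\widetilde G)$ and $\|H_1-H_0\|_{C^0}<\delta/2$; this $C^0$-bound guarantees $H_1$ is non-vanishing on $K$. Then I would invoke relative Thom transversality: $J(\widetilde G)$ is transverse to $\{0\}$ (from Proposition~\ref{prop:regular} together with the extension formula in Section~\ref{sec:6.3}, which shows the zeros and the local structure there are unchanged), so $H_1$ is already transverse to $\{0\}$ on an open neighbourhood of $A$. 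One perturbs $H_1$ off a slightly smaller neighbourhood of $A$ to a smooth $H_2$ that is transverse to $\{0\}$ globally, agrees with $J(\widetilde G)$ on $A$, and satisfies $\|H_2-H_1\|_{C^0}<\delta/4$. Since non-vanishing on $K$ is a $C^0$-open condition, $H_2$ remains non-vanishing on $K$, and $H:=H_2$ is the desired homotopy.

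The main obstacle will be executing this last step so that all three prescribed properties hold at once: smoothness, the rigid boundary condition $H=J(\widetilde G)$ at $t=0$, and non-vanishing on the compact set $K$. It is resolved by carrying out relative Whitney approximation and relative Thom transversality in sequence, in each case on the closed subset $A$ on which $J(\widetilde G)$ already enjoys the required regularity and transversality, with the $C^0$-size of each perturbation bounded in terms of $\inf_K\|H_0\|$ so that the non-vanishing on $K$ is not destroyed.
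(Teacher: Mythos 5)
Your proposal is correct and follows essentially the same route as the paper's proof: Proposition~\ref{prop:shielding2} supplies the continuous equivariant homotopy from $J(\widetilde{G})$ to $J(F)$, then relative Whitney approximation fixing the $t=0$ slice, then relative Thom transversality, with compactness and $C^0$-smallness of the perturbations preserving non-vanishing on $(S^n)^{\ast 2}_{>1}\times I$ and $(S^n)^{\ast 2}\times\{1\}$. The only difference is that the paper performs the approximation and transversality steps $D_4$-equivariantly (and keeps the map unchanged on $(S^n)^{\ast 2}_{>1}\times I$ and the $t=1$ slice), since equivariance of $H$ --- not demanded by the statement as written, but needed in the subsequent orbit-counting argument of Theorem~\ref{thm:transversal-bounds} --- must be preserved there.
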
 

\begin{proof}[Proof of Proposition~\ref{prop:transverse homotopy}]
Let $W=U_2^{\oplus m}\oplus V_2\oplus W_2$. By Proposition~\ref{prop:shielding2}, there exists a continuous $D_4$-equivariant homotopy $H'\colon (S^n)^{\ast 2}\times I \rightarrow W$ from $J(\widetilde{G})$ to $J(F)$ which is non-vanishing on the non-free part $(S^n)^{\ast 2}_{>1}\times I$. As $F$ is assumed to be never vanishing, so is~$J(F)$. We will now replace this $H'$ by a smooth homotopy which is transverse to zero and coincides with $J(\widetilde{G})$ on $(S^n)^{\ast 2}\times \{0\}$ and is non-vanishing on $(S^n)^{\ast 2}_{>1}\times I$ as well as on $(S^n)^{\ast 2}\times \{1\}$.

First, for any $\varepsilon>0$, relative equivariant Whitney approximation ~\cite[Theorem 4.2]{Br72} guarantees a smooth $D_4$-equivariant map $H'_\varepsilon\colon (S^n)^{\ast 2} \times I \rightarrow W$ which uniformly approximates $H'$ by~$\varepsilon$. Moreover, each $H_\varepsilon$ may be taken to coincide with $J(\widetilde{G})$ on $(S^n)^{\ast 2}\times \{0\}$ because $J(\widetilde{G})$ is $D_4$--equivariant  on the closed subset $(S^n)^{\ast 2}\times \{0\}$. Finally, by compactness  there exists some $\varepsilon>0$ such that $H_\varepsilon$ is non-vanishing on $(S^n)^{\ast 2}_{>1} \times I$ as well as on $(S^n)^{\ast 2} \times \{1\}$.  
Let $H''=H'_\varepsilon$ for such~$\varepsilon$. In particular, $H''$ coincides with $J(\widetilde{G)}$ on the slice $(S^n)^{\ast 2}\times \{0\}$. 

We now replace $H''$ with a smooth equivariant homotopy $H$ which again coincides with $H$ on $(S^n)^{\ast 2}\times \{0\}$, is non-zero on both $(S^n)^{\ast 2}_{>1}\times I$ and $(S^n)^{\ast 2}\times \{1\}$, and is in addition transverse to zero. Namely, as the $D_4$-action on $(S^n)^{\ast 2} \times I$ is free away from $(S^n)^{\ast 2}_{>1}\times I$ and $H''$ is already transverse to zero on $(S^n)^{\ast 2}\times \{0,1\}$ and $(S^n)^{\ast 2}_{>1}\times I$,  by  equivariant Thom transversality ~\cite[Proposition 2.2]{LW75} one can consider any $D_4$-equivariant uniform $H$ of $H''$ which is transverse to zero and so that $H$ coincides with $H'$ on $(S^n)^{\ast 2}\times \{0,1\}$ and $(S^n)^{\ast 2}_{>1}\times I$.
\end{proof} 

Using the $D_4$-equivariant transverse homotopy $H\colon (S^n)^{\ast 2}\times I \rightarrow U_2^{\oplus m}\oplus W_2\oplus V_2$ of Proposition~\ref{prop:transverse homotopy}, we may now prove Theorem~\ref{thm:transversal-bounds} via a standard zero counting argument as in ~\cite{Mi97}. See also ~\cite{HK20,MaSo21}.

\begin{proof}[Proof of Theorem~\ref{thm:transversal-bounds}] Let $W=U_2^{\oplus m}\oplus V_2\oplus W_2$. Assuming that $F\colon (S^n)^2\rightarrow U_2^{\oplus m}\oplus V_2$ is non-vanishing, let $H\colon (S^n)^{\ast 2}\times I \rightarrow W$ be as guaranteed by Proposition~\ref{prop:transverse homotopy}.  As $H$ is transverse to~$0$, as is $H\mid_{(S^n)^{\ast 2}\times \{0,1\}}$, it follows that $H^{-1}(0)$ is a closed one-dimensional manifold with boundary $\partial H^{-1}(0)=H^{-1}(0)\cap ((S^n)^{\ast 2}\times \{0,1\})$. As $H$ is non-vanishing on both $(S^n)^{\ast 2}\times \{1\}$ and $H\mid_{(S^n)^{\ast 2}\times \{0\}}=J(\widetilde{G})$, we conclude that $\partial H^{-1}(0)=J(\widetilde{G})^{-1}(0)$. Moreover, as $H$ is non-vanishing on $(S^n)^{\ast 2}_{>1}\times I$, $H^{-1}(0)$ lies in the free part of $(S^n)^{\ast 2}\times I$. On the other hand, $H^{-1}(0)$ is diffeomorphic to a union of segments and circles. It follows that $D_4$ freely permutes each of the segment components of $H^{-1}(0)$: if some non-trivial $g \in D_4$ set-wise fixed a segment, then a point of the segment must be fixed by $g$ too, but points outside $(S^n)^{\ast 2}_{>1}\times I$ have trivial stabilizer. Thus the number of segment components of $H^{-1}(0)$ is a multiple of $|D_4|$, and therefore $\partial H^{-1}(0)=J(\widetilde{G})^{-1}(0)$ consists of an even number of full $D_4$-orbits. This contradicts Proposition~\ref{prop:regular}, and therefore $F$ must vanish.\end{proof}
	
\section{A generalization of $\Delta(1,3)=3$ via topological reduction} 
\label{sec:reduction} 

For any $m\geq1$, it is an easy consequence of the Ham Sandwich theorem that an upper bound $\Delta(2m,k)\leq c$ implies the upper bound $\Delta(m,k+1)\leq c$ (see, e.g.~\cite{Ha66, Ra96}). In particular, the exact value $\Delta(1,3)=3$ of Theorem~\ref{thm:hyperplane-exact} follows from $\Delta(2,2)=3$. We now derive a similar reduction for multiple Radon pairs which follows from an application of Sarkaria's theorem. Theorem~\ref{thm:three-transversal} is then an immediate corollary. 

\begin{proposition}
\label{prop:topextensions}
Let $c \ge 1$, $d \ge 1$, $k\ge 1$, and $m\ge 1$ be integers. Suppose that the following holds for any $n\ge d+c+2$: \begin{itemize} \item [ ] For any continuous map $f\colon \Delta_{n-1}\to \mathbb{R}^d$ and any family $\F$ of subsets of $[n]$ with $\chi(\KG^{2^k}(\F)) \le 2m$, there are $k$ minimal Radon pairs $(\sigma_1^+,\sigma_1^-),\ldots, (\sigma_k^+,\sigma_k^-)$ for~$f$ such that none of the $2^k$ intersections of the form $\sigma_1^\pm \cap \dots \cap \sigma_k^\pm$ contains a set from~$\F$.\end{itemize} Then the following holds for any $n\ge d+c+2$: \begin{itemize} \item [ ] For any continuous map $f\colon \Delta_{n-1}\to \mathbb{R}^d$ and any family $\F$ of subsets of $[n]$ with $\chi(\KG^{2^{k+1}}(\F)) \le m$, there are $k+1$ minimal Radon pairs $(\sigma_1^+,\sigma_1^-),\ldots, (\sigma_{k+1}^+,\sigma_{k+1}^-)$ for~$f$ such that none of the $2^{k+1}$ intersections of the form $\sigma_1^\pm \cap \dots \cap \sigma_{k+1}^\pm$ contains a set from~$\F$. \end{itemize}
\end{proposition}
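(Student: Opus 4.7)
The plan is to imitate the classical Ham Sandwich reduction $\Delta(2m,k)\le c\Rightarrow\Delta(m,k+1)\le c$ in the topological Radon setting, with Sarkaria's theorem (Theorem~\ref{thm:sarkaria}) playing the role of the Ham Sandwich theorem: one Radon pair is produced by Sarkaria so as to ``cut every $2^k$-matching'' inside each color class. This doubles the number of color classes while halving the matching threshold, after which the hypothesis of the proposition supplies the remaining $k$ Radon pairs.

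Fix $f\colon\Delta_{n-1}\to\R^d$ with $n\ge d+c+2$ and a coloring $\F=\F_1\sqcup\cdots\sqcup\F_m$ in which no $\F_i$ contains $2^{k+1}$ pairwise disjoint sets. I will work in the natural regime $c\ge m$, which ensures the dimensional hypothesis $n\ge d+m+2$ for Sarkaria is available and which covers the intended applications; I also assume, as is standard, that every member of $\F$ is non-empty. Define the obstruction family
\[
\G \;:=\; \bigcup_{i=1}^m \G_i,\qquad \G_i \;:=\; \bigl\{A_1\cup\cdots\cup A_{2^k}\,:\,A_1,\ldots,A_{2^k}\in\F_i\text{ pairwise disjoint}\bigr\}.
\]
If two members of the same $\G_i$ were disjoint, concatenating their $2^k$-matchings would produce $2^{k+1}$ pairwise disjoint sets in $\F_i$, contradicting the coloring. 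Hence each $\G_i$ is intersecting and $\chi(\KG(\G))\le m$.

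Let $\Sigma$ be the simplicial complex on $[n]$ whose non-faces are exactly the supersets of members of $\G$. Sorting each non-face by the smallest $i$ with a subset in $\G_i$, and splitting further if necessary (sub-families of intersecting families remain intersecting), partitions the non-faces of $\Sigma$ into $n-d-2$ intersecting families. Theorem~\ref{thm:sarkaria}, applied to $f|_{|\Sigma|}$, yields disjoint faces $\sigma_{k+1}^+,\sigma_{k+1}^-\in\Sigma$ with $f(\sigma_{k+1}^+)\cap f(\sigma_{k+1}^-)\ne\emptyset$; passing to a minimal sub-Radon pair only shrinks these faces and therefore preserves the property that neither contains any set from $\G$. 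Setting $\F_i^\pm:=\{A\in\F_i\colon A\subseteq\sigma_{k+1}^\pm\}$, a $2^k$-matching inside $\F_i^+$ would give a union in $\G_i$ contained in $\sigma_{k+1}^+$, which is ruled out. Hence $\F_i^\pm$ is $2^k$-matching-free for each $i$ and each sign, so
\[
\F'\;:=\;\bigsqcup_{i=1}^m(\F_i^+\sqcup\F_i^-)
\]
satisfies $\chi(\KG^{2^k}(\F'))\le 2m$ via this $2m$-class partition.

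Applying the hypothesis of the proposition to $f$ and $\F'$ produces $k$ minimal Radon pairs $(\sigma_1^+,\sigma_1^-),\ldots,(\sigma_k^+,\sigma_k^-)$ such that no $A\in\F'$ lies in any orthant $\bigcap_{i\le k}\sigma_i^{\epsilon_i}$. To verify that these, together with $(\sigma_{k+1}^+,\sigma_{k+1}^-)$, are the desired $k+1$ Radon pairs for $\F$, fix $A\in\F$ and a sign vector $(\epsilon_1,\ldots,\epsilon_{k+1})$. If $A\in\F'$ with $A\subseteq\sigma_{k+1}^\epsilon$, then the orthants with $\epsilon_{k+1}=\epsilon$ are ruled out by the hypothesis just applied to $\F'$, and those with $\epsilon_{k+1}\ne\epsilon$ are ruled out because $A\cap\sigma_{k+1}^{\epsilon_{k+1}}=\emptyset$ and $A\ne\emptyset$. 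If $A\notin\F'$, then $A\not\subseteq\sigma_{k+1}^+$ and $A\not\subseteq\sigma_{k+1}^-$ already, so $A$ misses every orthant automatically. The main obstacle is conceptual and lies entirely in the first step: recognizing that the unions of $2^k$-matchings within a single color class form the right obstruction to feed to Sarkaria, because their pairwise intersection property is precisely the content of the assumption $\chi(\KG^{2^{k+1}}(\F_i))\le 1$. Once this is in place, the remaining steps are combinatorial bookkeeping.
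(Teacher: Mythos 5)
Your argument is essentially the paper's own proof: the same obstruction families $\G_i$ consisting of unions of $2^k$ pairwise disjoint members of each color class, the same application of Sarkaria's theorem (Theorem~\ref{thm:sarkaria}) to the complex whose non-faces are the supersets of members of $\G=\cup_i\G_i$, the same split into $\F_i^{\pm}$ according to containment in the two sides of the resulting minimal Radon pair, and the same concluding bookkeeping. The only deviation is that you \emph{assume} $c\ge m$, whereas the proposition has no such hypothesis; the paper removes this by noting that the assumption itself forces it: restricted to linear maps, the assumed statement is statement (1) of Theorem~\ref{thm:implications} with $2m$ in place of $m$, hence $\Delta(2m,k)\le c$, and the Avis--Ramos lower bound $\Delta(2m,k)\ge\lceil\frac{2m(2^k-1)}{k}\rceil\ge m$ gives $c\ge m$. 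Inserting that short observation makes your proof complete and unconditional.
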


\begin{proof}[Proof of Proposition~\ref{prop:topextensions}] 

We first note that the assumption of Proposition~\ref{prop:topextensions} implies that $c\geq m$. This is because we are assuming that for any $n\geq d+c+2$ we have in particular that for any \emph{linear} map $f\colon \Delta_{n-1}\to \mathbb{R}^d$ and any family $\F$ of subsets of $[n]$ with $\chi(\KG^{2^k}(\F)) \le 2m$, there are $k$ minimal Radon pairs $(\sigma_1^+,\sigma_1^-),\ldots, (\sigma_k^+,\sigma_k^-)$ for~$f$ such that none of the $2^k$ intersections of the form $\sigma_1^\pm \cap \dots \cap \sigma_k^\pm$ contains a set from~$\F$. Thus part (1) of Theorem~\ref{thm:implications} is satisfied (with $2m$ replacing $m$), and so by part (3) of Theorem~\ref{thm:implications} we have $c\geq \Delta(2m,k)$. On the other hand, $\Delta(2m,k)\geq \lceil\frac{2m(2^k-1)}{k}\rceil$ by the lower bound of Ramos~\cite{Ra96}, so in particular $c\geq m$.

Now let $n\ge d+c+2$, let $f\colon \Delta_{n-1}\to \mathbb{R}^d$ be continuous, and suppose that $\F$ is a family of subsets of $[n]$ with $\chi(\KG^{2^{k+1}}(\F)) \le m$, say $\F=\F_1\cup \cdots \cup \F_m$ where $\chi(\KG^{2^{k+1}}(\F_i))=1$ for all $i\in [m]$. We show there are there are $k+1$ minimal Radon pairs $(\sigma_1^+,\sigma_1^-),\ldots, (\sigma_{k+1}^+,\sigma_{k+1}^-)$ for~$f$ such that none of the $2^{k+1}$ intersections of the form $\sigma_1^\pm \cap \dots \cap \sigma_{k+1}^\pm$ contains a set from~$\F$. 

In order to apply Theorem~\ref{thm:sarkaria}, for each $i\in [m]$ we define $\G_i$ to be the family of subsets of $[n]$ consisting of all $2^k$-fold unions of pairwise disjoint members of $\F_i$, i.e.  $$\G_i=\left\{\cup_{j=1}^{2^k} \tau_j \mid \tau_j \in \F_i\,\, \text{for all}\,\, j\,\, \text{and}\,\, \tau_{j_1}\cap \tau_{j_2}=\emptyset\,\,\text{for all} \,\, j_1\neq j_2\right\}.$$ As $\chi(\KG^{2^{k+1}}(\F_i))=1$, 
each non-empty $\G_i$ is an intersecting family. Letting $\G=\cup_{i=1}^m \G_i$, we therefore have that $\chi(\KG(\G))\leq m$ and so $\chi(\KG(\G))\leq c$. As in the proof of Proposition~\ref{prop:dolnikov-finite}, let $\Sigma$ denote the simplicial complex on $[n]$ whose minimal non-faces are the sets of ~$\G$. Since the set family $2^{[n]} \setminus \Sigma$ is obtained from $\G$ by including any supersets of elements of~$\G$, we have $\chi(\KG(2^{[n]} \setminus \Sigma) = \chi(\KG(\G))\leq c$. Applying Theorem~\ref{thm:sarkaria} to the restriction $f\mid_{|\Sigma|}\colon |\Sigma| \rightarrow \R^d$, we conclude that there is a minimal Radon pair $\{\sigma_1^+,\sigma_1^-\}$ for $f$ such that neither $\sigma_1^+$ nor $\sigma_1^-$ contains any set from $\G$. Thus the union of any $2^k$ pairwise disjoint elements from any $\F_i$ is not contained in either $\sigma_1^+$ or $\sigma_1^-$.

For each $i\in [m]$, we now define $$\F_i^+=\{\tau\in \F_i \mid \tau \subset \sigma_1^+\}\,\, \text{and}\,\, \F_i^-=\{\tau\in \F_i \mid \tau \subset \sigma_1^-\}.$$ If $\F_i^+\neq\emptyset$, then necessarily $\chi(\KG^{2^k}(\F_i^+))=1$ since otherwise $\sigma_1^+$ would contain the union of some collection of $2^k$ pairwise disjoint elements of $\F_i$. Likewise, we have $\chi(\KG^{2^k}(\F_i^-))\leq 1$ for all $i \in [m]$ as well. Letting $$\F'=\cup_{i=1}^m \F_i^+ \cup\, \cup_{i=1}^m \F_i^-,$$ we therefore have that $\chi(\KG^{2^k}(\F'))\leq 2m$. By the assumption of Proposition~\ref{prop:topextensions}, there must be $k$ minimal Radon pairs $(\sigma_2^+,\sigma_2^-), \ldots, (\sigma_{k+1}^+,\sigma_{k+1}^-)$ for $f$ such that none of the intersections of the form $\sigma_2^\pm\cap \cdots \cap \sigma_{k+1}^\pm$ contains any set of $\F'$. 

To conclude the proof, suppose that $\tau\in \F_i$ for some $i\in [m]$ and suppose for contradiction that $\tau$ were contained in some intersection of the form $\sigma_1^\pm\cap \sigma_2^\pm \cap \cdots\cap  \sigma_{k+1}^\pm$. Since $\tau$ is contained in $\sigma_2^\pm \cap \cdots\cap  \sigma_{k+1}^\pm$, we conclude that $\tau$ is not in either $\F_i^+$ or $\F_i^-$, and therefore that $\tau$ is not contained in either $\sigma_1^+$ or $\sigma_1^-$. But $\tau\subset \sigma_1^\pm\cap \sigma_2^\pm \cap \cdots\cap  \sigma_{k+1}^\pm$, so we have a contradiction. Thus none of the $2^{k+1}$ intersections of the form $\sigma_1^\pm\cap \sigma_2^\pm \cap \cdots\cap  \sigma_{k+1}^\pm$ contains any set from $\F$ and the proof is complete.\end{proof} 

We conclude with the proof of Theorem~\ref{thm:three-transversal}. 

\begin{proof}[Proof of Theorem~\ref{thm:three-transversal}]
Appealing to the $m=2$ case of Theorem~\ref{thm:two-radon}, Proposition~\ref{prop:topextensions} shows that for any continuous map $f\colon \Delta_{d+4}\rightarrow \mathbb{R}^d$ and any family $\F$ of subsets of $[d+5]$ with $\chi(\KG^8(\F))=1$, there are three minimal Radon pairs $(\sigma_1^+,\sigma_1^-), (\sigma_2^+,\sigma_2^-),$ and $(\sigma_3^+,\sigma_3^-)$ for~$f$ such that none of the eight intersections of the form $\sigma_1^\pm \cap \sigma_2^\pm \cap \sigma_3^\pm$ contains a set from~$\F$.  Theorem~\ref{thm:three-transversal} now follows from Theorem~\ref{thm:implications}. \end{proof}

\section*{Acknowledgements}

The authors are grateful to the anonymous reviewers, whose helpful comments improved the clarity and presentation of the manuscript. In particular, we thank them for suggesting the simplified proof of equation~(\ref{eqn:extension}).

\end{document}